\documentclass[12pt, reqno]{amsart}
\usepackage{amsmath, amsthm, amscd, amsfonts, amssymb, graphicx, color}
\usepackage[bookmarksnumbered, colorlinks, plainpages]{hyperref}

\textheight 22.5truecm \textwidth 14.5truecm
\setlength{\oddsidemargin}{0.35in}\setlength{\evensidemargin}{0.35in}

\setlength{\topmargin}{-.5cm}

\def\v2{v_2^{(2)}}

\def\diso{\lower.4ex\hbox{$\downarrow$}\raise.4ex\hbox{\mbox{\scriptsize $\wr$}}}

\def\iso{\,\lower .6ex\hbox{$\stackrel{\lra}{\mbox{\tiny $\sim\,$}}$}\,}

\def\lg{l\raise.6ex\hbox to.2em{\hss.\hss}l}
\def\lra{\longrightarrow}

\def\orb{\hbox to  .3em{$\backslash$}\backslash}

\def\lra{\longrightarrow}

  \def\beq{\begin{equation}}
  \def\eeq{\end{equation}}

\newtheorem{theorem}{Theorem}[section]
\newtheorem{lemma}[theorem]{Lemma}
\newtheorem{proposition}[theorem]{Proposition}

\theoremstyle{definition}
\newtheorem{definition}[theorem]{Definition}

\newtheorem{remark}[theorem]{Remark}
\numberwithin{equation}{section}

\begin{document}
\setcounter{page}{1}

\centerline{}

\centerline{}
\allowdisplaybreaks
\title[\hfilneg \hfil Solvability of some Stefan type problems]{ Solvability of some Stefan type problems}

\author[Y. Akdim, M. El Ansari and S. Lalaoui ]{Youssef AKDIM$^1$, Mohammed EL ANSARI$^2$ and  Soumia  LALAOUI RHALI$^3$}

\address{$^{1}$ Sidi Mohamed Ben Abdellah University, LAMA Laboratory, Faculty of Sciences D'har El Mahraz, P.O. Box 1796 Atlas Fez, Morocco.}
\email{\textcolor[rgb]{0.00,0.00,0.84}{$^1$$youssef.akdim@usmba.ac.ma$}}

\address{$^{2,3}$ Sidi Mohamed Ben Abdellah University, Laboratory of Engineering Sciences, Multidisciplinary Faculty of Taza,
                  P.O. Box 1223 Taza, Morocco.}
\email{\textcolor[rgb]{0.00,0.00,0.84}{$^2$$
mohammed.elansari1@usmba.ac.ma$}, $
\textcolor{blue}{soumia.lalaoui@usmba.ac.ma}$}

\subjclass[2010]{35J60; 35A01}

\keywords{Anisotrpic Sobolev spaces, Trunctions, maximal monotone graphe, Stefan type problems.}
\date{Received: xxxxxx; Revised: yyyyyy; Accepted: zzzzzz
\newline  \indent $^{*}$ Corresponding author}

\begin{abstract}
In this paper, we interest on some class of Stefan type problems. We prove the existence and uniqueness of renormalized solution in anisotropic Sobolev spaces  with data belongs to $L^1- data,$ based on the properties of the renormalized  trunctions and the generalized monotonicity method
in the functional spaces.
\end{abstract}

\maketitle


\section{Introduction}

Anisotropic elliptic equations have received much attention in recent years (see for example, \cite{ref:17}, \cite{ref:18}, \cite{ref:19}, \cite{ref:20}, \cite{ref:21} and their references). Time dependent versions of these equations have been used as mathematical models to describe the spread of an epidemic disease, see \cite{ref:22}. such evolution models also arise in fluid dynamics when the media has different conductivities in the different directions (see \cite{ref:23}, \cite{ref:23}), and electrorheological fluids (see \cite{ref:17} for more details) as an important class of non-Newtonian fluids.\\

We are interested in the study of the behavior of solutions for a class of Stefan-type problems of form:

\begin{equation*}
 (E,f)
   \left\{
\begin{array}{l@{~}l@{~}l@{~}l}
\beta(u)-div(a(x,Du)+F(u))\ni f  & \text{in}   &\Omega,\\ 
u=0 &  \text{on} & \partial\Omega.
\end{array}
 \right.
\end{equation*}

with $\Omega $ is a bounded domain in $\mathbb{R}^{N}(N\geq 1)$ and  $\partial \Omega $
Lipschitz boundary if $N\geq 2$, a right-hand side $f$ which is assumed to belong to $L^{\infty }(\Omega
)$ or $L^{1}(\Omega )$ for $(E,f)$. Furthermore, $F:\mathbb{R}\rightarrow
\mathbb{R^{N}}$ is locally lipschitz continuous and $\beta :\mathbb{R}%
\rightarrow 2^{\mathbb{R}}$ is a set valued, maximal monotone mapping such
that $0\in \beta (0)$ and \newline
$a:\Omega \times \mathbb{R^{N}}\rightarrow \mathbb{R^{N}}$ is a
Carath\'{e}odory function satisfying the following assumptions: \newline

\noindent $(\mathbf{H}_{1})-$ Coerciveness: there exists a positive constant
$\lambda $ such that
\begin{equation*}
\sum_{i=1}^{N}a_{i}(x,\xi ).\xi _{i}\geq \lambda \sum_{i=1}^{N}|\xi
_{i}|^{p_{i}}
\end{equation*}
holds for all $\xi \in \mathbb{R}^{N}$ and almost every $x\in \Omega $.%
\newline

\noindent $(\mathbf{H}_{2})-$ Growth restriction:

\begin{center}
$|a_i(x,\xi)|\leq \gamma (d_i(x)+|\xi_i^{p_i-1}|) $
\end{center}

for almost every $x\in \Omega$, $\gamma$ is a positive constant for $%
i=1,...,N$, $d_i$ is a positive function in $L^{p^{\prime}_i}(\Omega)$ and
every $\xi \in \mathbb{R}^N$.\newline

\noindent $(\mathbf{H}_{3})-$ Monotonicity in $\xi \in \mathbb{R}^{N}$:
\begin{equation*}
(a(x,\xi )-a(x,\eta )).(\xi -\eta )\geq 0,
\end{equation*}
for almost every $x\in \Omega $ and for $\xi ,\eta \in \mathbb{R}^{N}$.%
\newline

Due to the possible jumps of $\beta$, problem $(E,f)$ enters to class of stefan problem for wich there exists a large number of references, among them \cite{ref:25}, \cite{ref:26}.\\
Here we use the notion of renormalised solution developed by by DiPerna and Lions \cite{D1}, for first order equations for $L^1-$data in \cite{ref:32}, and for Radon mesure data in \cite{ref:33}. It was then extended to the study of various problems of partial differential equations of parabolic, elliptic-parabolic and hyperbolic type, we refer to \cite{ref:27}, \cite{ref:28}.\\
Our problem has been studied in variable exponents spaces and Orlicz by Wittbold et al. \cite{ref:1}, \cite{ref:2} and in the weighted Sobolev spaces by Akdim and Allalou \cite{ref:5}. Other works in this direction can be found in \cite{ref:3},\cite{ref:11},\cite{ref:13}.
Our objective in this work is to prove an existence result of $(E,f)$ in
anisotropic Sobolev spaces, this notion were introduced by Nikolskii \cite{ref:29}, and Troisi \cite{ref:12}. The main tools in our proofs are Poincar\'{e}
inequality and the embedding for anisotropic Sobolev spaces. It would be interesting to refer to some Embedding theorems for anisotropic Sobolev-Orlicz spaces \cite{ref:30} and a fully anisotropic Sobolev Inequality
established by Cianchi in \cite{ref:31}.  \\
The paper isorganized as follows: In Section $2$, we recall the standard framework of
anisotropic Sobolev spaces and some notations which will be used frequently.
In Section $3$, we introduce the notion of weak and also renormalized
solution for the problem $(E,f)$ for any $L^{1}$-data. In Section $4,$ we
give our main results on the existence and uniqueness of renormalized
solutions and we discuss the existence of weak solutions. Section 5 is
devoted to the case where $f\in L^{\infty }(\Omega )$,\ we prove the
existence of a renormalized solution. Based on this result, the existence
and uniqueness of a\ renormalized solution in the case where $f\in L^{1}(\Omega )$ is shown in Section 6. In Section 7, we will prove the
existence of a weak solution. Finally, we give an example for illustrating our abstract result. 

\section{ Function spaces}

\subsection{Anisotropic Sobolev spaces}

Let $\Omega $ be a bounded open subset of $\mathbb{R}^{N}$, $(N\geq 2)$ and
let $1\leq p_{1},...,p_{N}<\infty $ be $N$ real numbers, $%
p^{+}=max(p_{1},...,p_{N})$, $p^{-}=min(p_{1},...,p_{N})$ and $%
\overrightarrow{p}=(p_{1},...,p_{N})$. The anisotropic spaces (see \cite{ref:12})

\begin{center}
$W^{1,{\overrightarrow{p}}}(\Omega)=\{u\in W^{1,1}(\Omega):\partial_{x_i}u
\in L^{p_i}(\Omega), i=1,...,N \}$.
\end{center}

is a Banach space with respect to norm

\begin{center}
\begin{equation*}
\lVert u\lVert_{W^{1,{\overrightarrow{p}}}(\Omega)}=\lVert
u\lVert_{L^1(\Omega)} + \sum_{i=1}^{N}\lVert
\partial_{x_i}u\lVert_{L^{p_i}(\Omega)}
\end{equation*}
.
\end{center}

The space $W_{0}^{1,{\overrightarrow{p}}}(\Omega )$ is the closure of $%
C_{0}^{\infty }(\Omega )$ with respect to this norm.\newline
The dual space of anisotropic Sobolev space $W_{0}^{1,{\overrightarrow{p}}%
}(\Omega )$ is equivalent to $W^{-1,{\overrightarrow{p^{\prime }}}}(\Omega )$%
, where $\overrightarrow{p^{\prime }}=(p_{1}^{\prime },...,p_{N}^{\prime })$
and $p_{i}^{\prime }=\dfrac{p_{i}}{p_{i}-1}$ for all $i=1,...,,N$.

We recall now a Poincar\'{e}-type inequality: \newline
Let $u\in W_{0}^{1,{\overrightarrow{p}}}(\Omega )$, then for every $q\geq 1$
there exists a constant $C_{p}$ (depending on $q$ and $i$ (see \cite{ref:16}), such that
\begin{equation}\label{eq2.1}
\lVert u\lVert _{L^{q}(\Omega )}\leq C_{p}\lVert \partial _{x_{i}}u\lVert
_{L^{p_{i}}(\Omega )}\hspace*{2mm}\text{for}\hspace*{2mm}i=1,...,N.
\end{equation}
Moreover a Sobolev-type inequality holds. Let us denote by $\overline{p}$
the harmonic mean of these numbers, i.e. $\dfrac{1}{\overline{p}}=\dfrac{1}{N%
}\displaystyle\sum\limits_{i=1}^{N}\frac{1}{p_{i}}$. Let $u\in W_{0}^{1,{%
\overrightarrow{p}}}(\Omega )$. It follows from \cite{ref:12} that there
exists a constant $C_{s}$ such that
\begin{equation}\label{eq2.2}
\lVert u\lVert _{L^{q}(\Omega )}\leq C_{s}\prod_{i=1}^{N}\lVert \partial
_{x_{i}}u\lVert _{L^{p_{i}}(\Omega )}^{\frac{1}{N}},
\end{equation}
where $q=\overline{p}^{\ast }=\frac{N\overline{p}}{N-\overline{p}}$ if $%
\overline{p}<N$ or $q\in \lbrack 1,+\infty \lbrack $ if $\overline{p}\geq N.$
On the right-hand side of $(\ref{eq2.2})$ it is possible to replace the geometric
mean by the arithmetic mean: let $a_{1},...,a_{N}$ be positive numbers, it
holds
\begin{equation*}
\displaystyle\prod_{i=1}^{N}a_{i}^{\frac{^{1}}{N}}\leq \dfrac{1}{N}%
\sum_{i=1}^{N}a_{i},
\end{equation*}
which implies by $\ref{eq2.2}$ that
\begin{equation}\label{eq2.3}
\lVert u\lVert _{L^{q}(\Omega )}\leq \dfrac{C_{s}}{N}\sum_{i=1}^{N}\lVert
\partial _{x_{i}}u\lVert _{L^{p_{i}}(\Omega )}.
\end{equation}
Note that when the following inequality holds
\begin{equation}\label{eq2.4}
\overline{p}<N,
\end{equation}
inequality $(\ref{eq2.3})$ implies the continuous embedding of the space $W_{0}^{1,{%
\overrightarrow{p}}}(\Omega )$ into $L^{q}(\Omega )$ for every $q\in \lbrack
1,\overline{p}^{\ast }].$ On the other hand, the continuity of the embeding $%
W_{0}^{1,{\overrightarrow{p}}}(\Omega )\hookrightarrow L^{p^{+}}(\Omega )$
with $p^{+}:=max\{p_{1},...,p_{N}\}$ relies on inequality $\ref{eq2.1}$. \newline
It may happen that $\overline{p}^{\ast }<p^{+}$ if the exponents $p_{i}$ are
closed enough, then \newline
$p_{\infty }:=max\{\overline{p}^{\ast },p^{+}\}$ turns out to be the
critical exponent in the anisotrpic Sobolev embedding (see \cite{ref:12}).

\begin{proposition}\label{2.1}
If the condition $\ref{eq2.4}$ holds, then for $q\in \lbrack 1,p_{\infty }]$ there
is a continuous embedding $W_{0}^{1,{\overrightarrow{p}}}(\Omega
)\hookrightarrow L^{q}(\Omega )$. For $q<p_{\infty }$ the embedding is
compact.
\begin{equation}\label{eq2.5}
W_{0}^{1,{\overrightarrow{p}}}(\Omega )\hookrightarrow \hookrightarrow
L^{q}(\Omega ).
\end{equation}
\end{proposition}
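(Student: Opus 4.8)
The plan is to deduce the statement from the two continuous embeddings already available, namely $(\ref{eq2.1})$ and $(\ref{eq2.3})$, together with a Rellich--Kondrachov argument at the exponent $1$ and a single interpolation inequality; this is essentially the route of Troisi \cite{ref:12}.

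\textbf{Step 1 (continuity up to $p_\infty$).} If $p_\infty=\overline{p}^{\ast}$, then $(\ref{eq2.3})$ gives at once
\begin{equation*}
\lVert u\rVert_{L^{p_\infty}(\Omega)}\le\frac{C_s}{N}\sum_{i=1}^{N}\lVert\partial_{x_i}u\rVert_{L^{p_i}(\Omega)}\le C\,\lVert u\rVert_{W_0^{1,\overrightarrow{p}}(\Omega)};
\end{equation*}
if instead $p_\infty=p^{+}$, the same bound into $L^{p_\infty}(\Omega)$ follows from $(\ref{eq2.1})$ applied with $q=p^{+}$ and any fixed index $i$. In both cases $W_0^{1,\overrightarrow{p}}(\Omega)\hookrightarrow L^{p_\infty}(\Omega)$ continuously, and since $\Omega$ is bounded, $L^{p_\infty}(\Omega)\hookrightarrow L^{q}(\Omega)$ for every $q\in[1,p_\infty]$ by H\"older's inequality; composing yields the continuous embedding $W_0^{1,\overrightarrow{p}}(\Omega)\hookrightarrow L^{q}(\Omega)$ for all $q\in[1,p_\infty]$.

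\textbf{Step 2 (a base compact embedding).} By definition $W_0^{1,\overrightarrow{p}}(\Omega)\subset W_0^{1,1}(\Omega)$, and the inclusion is continuous because $\Omega$ is bounded (so $\lVert\partial_{x_i}u\rVert_{L^1(\Omega)}\le C\lVert\partial_{x_i}u\rVert_{L^{p_i}(\Omega)}$). Extending functions of $W_0^{1,1}(\Omega)$ by zero to a ball containing $\Omega$ and invoking the classical Rellich--Kondrachov theorem gives $W_0^{1,1}(\Omega)\hookrightarrow\hookrightarrow L^{1}(\Omega)$; hence $W_0^{1,\overrightarrow{p}}(\Omega)\hookrightarrow\hookrightarrow L^{1}(\Omega)$.

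\textbf{Step 3 (interpolation).} Fix $q\in[1,p_\infty)$. The case $q=1$ is Step 2, so assume $1<q<p_\infty$ and pick $\theta\in(0,1)$ with $\tfrac1q=\theta+\tfrac{1-\theta}{p_\infty}$. For $u\in W_0^{1,\overrightarrow{p}}(\Omega)$, since $\Omega$ is bounded we have $u\in L^{1}(\Omega)\cap L^{p_\infty}(\Omega)$ and the interpolation inequality gives
\begin{equation*}
\lVert u\rVert_{L^{q}(\Omega)}\le\lVert u\rVert_{L^{1}(\Omega)}^{\theta}\,\lVert u\rVert_{L^{p_\infty}(\Omega)}^{1-\theta}.
\end{equation*}
If $(u_n)$ is bounded in $W_0^{1,\overrightarrow{p}}(\Omega)$, then by Step 2 a subsequence is Cauchy in $L^{1}(\Omega)$ while by Step 1 it stays bounded in $L^{p_\infty}(\Omega)$; applying the displayed inequality to the differences of this subsequence shows it is Cauchy in $L^{q}(\Omega)$. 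Therefore $W_0^{1,\overrightarrow{p}}(\Omega)\hookrightarrow\hookrightarrow L^{q}(\Omega)$, i.e. $(\ref{eq2.5})$ holds. The only point needing care is Step 2: the compact embedding into $L^{1}$ must be obtained without any regularity assumption on $\partial\Omega$, which is exactly why one works in $W_0^{1,\overrightarrow{p}}(\Omega)$, where extension by zero is legitimate; the rest is a routine use of H\"older's inequality on the bounded set $\Omega$.
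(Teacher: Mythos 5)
Your proof is correct. Note, however, that the paper itself offers no proof of Proposition \ref{2.1}: it is stated as a known result, with the surrounding text pointing to Troisi \cite{ref:12} for the identification of $p_{\infty}$ as the critical exponent. So there is no in-paper argument to compare against; what you have written is a clean, self-contained justification along the standard lines (endpoint continuity into $L^{p_\infty}$ from $(\ref{eq2.1})$ and $(\ref{eq2.3})$, a base compact embedding $W_0^{1,1}(\Omega)\hookrightarrow\hookrightarrow L^1(\Omega)$ via extension by zero and Rellich--Kondrachov, then interpolation to lift compactness to every $q<p_\infty$). The only point worth flagging is that in the case $p_\infty=p^{+}$ you invoke $(\ref{eq2.1})$ exactly as the paper states it, i.e.\ ``for every $q\ge 1$ and every $i$''; as written that inequality is an overstatement of the anisotropic Poincar\'e inequality, but the instance you actually need --- $q=p^{+}$ with the index $i$ realizing the maximum $p_i=p^{+}$ --- is genuinely valid and is the same instance the paper itself uses to assert the continuous embedding into $L^{p^{+}}(\Omega)$. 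With that understood, your argument is complete.
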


\subsection{Notations and functions}

Before we discuss the concept of solution we introduce some notations and
functions that will be frequently used.\newline
We begin by introducing the truncature operator. For given constant $k>0$ we
define the cut function $T_{k}:\mathbb{R}\rightarrow \mathbb{R}$ as
\begin{equation*}
T_{k}\left( r\right) =\left\{
\begin{array}{r@{~}c@{~}l}
-k,\text{if} & r\leq -k, &  \\
r,\text{if} & |r|<k, &  \\
k,\text{if} & r\geq k, &
\end{array}
\right.
\end{equation*}

\begin{figure}[h!]
\centering
\includegraphics[width=.7\textwidth]{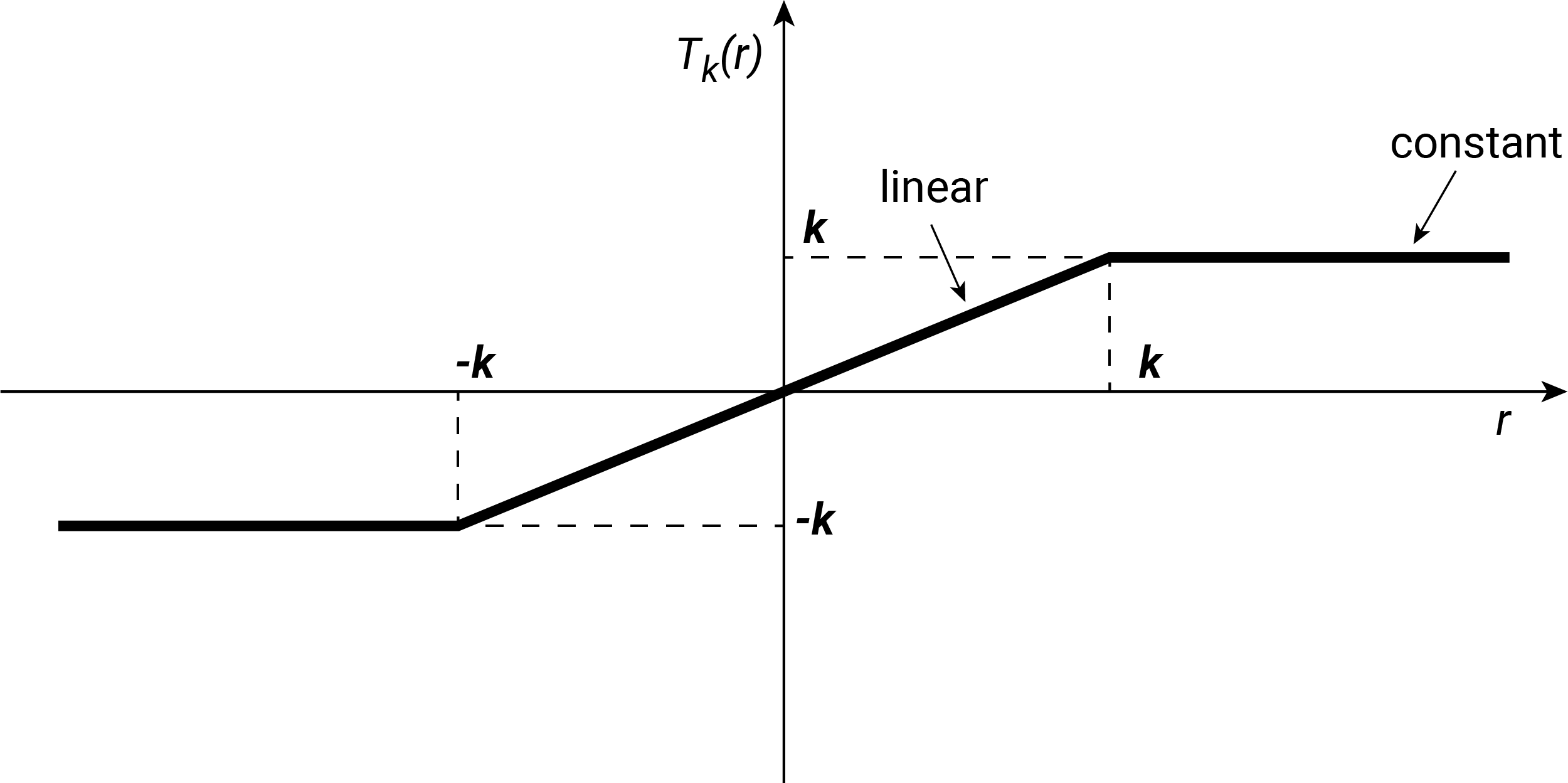}
\caption{Trunction function}
\end{figure}

and for $r\in \mathbb{R},$ let us define the functions : $r\rightarrow
r^{+}:=max(r,0)$ and $r\rightarrow sign_{0}(r)$ the usual sign function
which is defined by
\begin{equation*}
r\rightarrow sign_{0}(r):=\left\{
\begin{array}{r@{~}c@{~}l}
-1, & \text{on} & ]-\infty ,0[, \\
1, & \text{on} & ]0,\infty \lbrack , \\
0, & \text{if} & r=0.
\end{array}
\right.
\end{equation*}
and
\begin{equation*}
r\rightarrow sign_{0}^{+}(r):=\left\{
\begin{array}{r@{~}c@{~}l}
1, & \text{if} & r>0, \\
0, & \text{if} & r\leq 0.
\end{array}
\right.
\end{equation*}
Let $h_{l}:\mathbb{R}\rightarrow \mathbb{R}$ be defined by $%
h_{l}(r):=min((l+1-|r|)^{+},1)$ for each $r\in \mathbb{R}$.

\begin{figure}[h!]
\centering
\includegraphics[width=.8\textwidth]{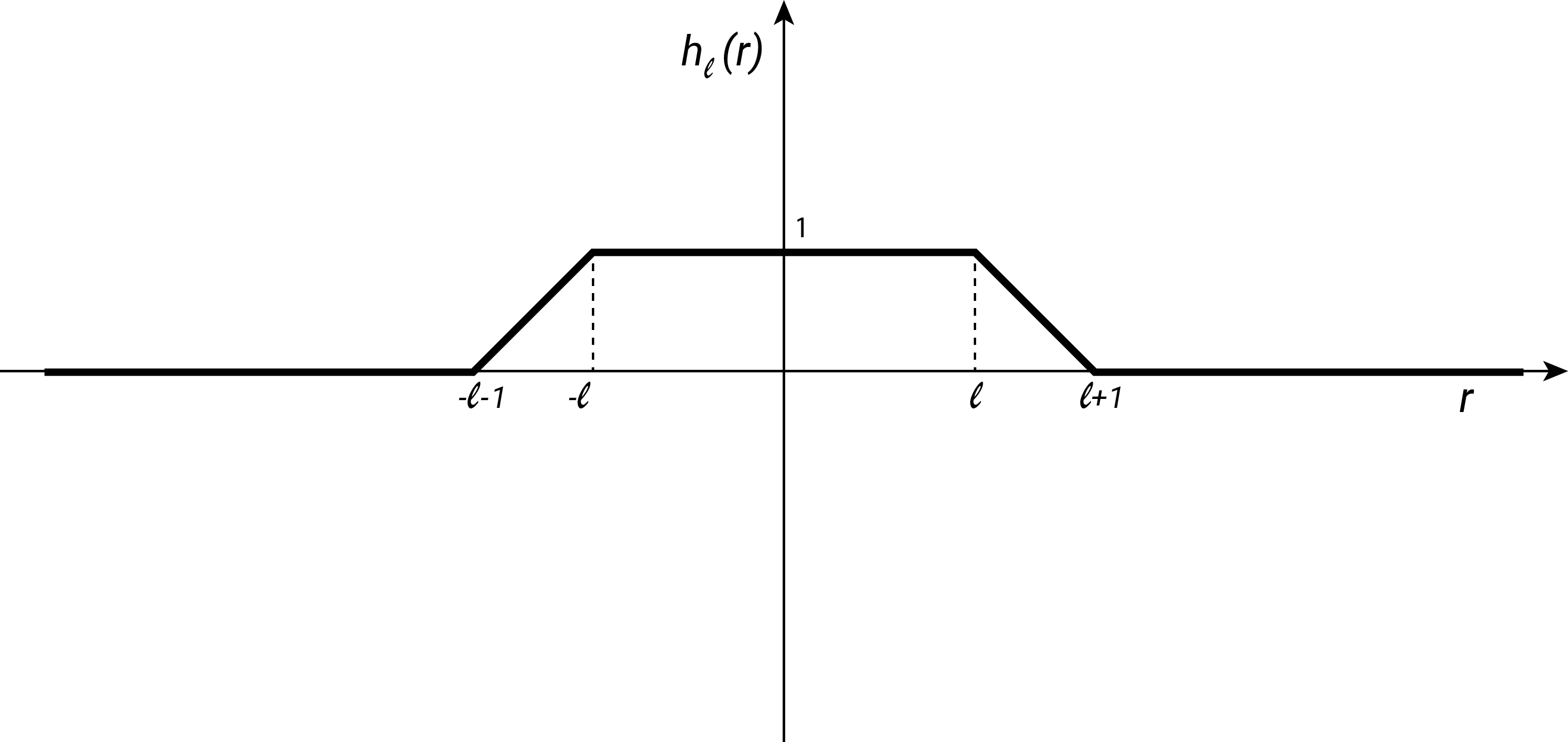}
\caption{Function $h_l(r)$}
\end{figure}

For $\sigma >0,$ we define $H_{\sigma }^{+}:\mathbb{R}\rightarrow \mathbb{R}$
by
\begin{equation*}
H_{\sigma }^{+}(r):=\left\{
\begin{array}{l@{~}l}
0,\text{if} & r<0, \\
\dfrac{1}{\sigma }r,\text{if} & 0\leq r\leq \sigma , \\
1,\text{if} & r>\sigma .
\end{array}
\right.
\end{equation*}
and $H_{\sigma }:\mathbb{R}\rightarrow \mathbb{R}$ by
\begin{equation*}
H_{\sigma }(r):=\left\{
\begin{array}{l@{~}l}
-1,\text{if} & r<-\sigma , \\
\dfrac{1}{\sigma }r,\text{if} & -\sigma \leq r\leq \sigma , \\
1,\text{if} & r>\sigma .
\end{array}
\right.
\end{equation*}

\section{Notion of solutions}

\subsection{Weak solutions}

\begin{definition}
A weak solution of $(E,f)$ is a pair of functions $(u,b)\in W_{0}^{1,{%
\overrightarrow{p}}}(\Omega )\times L^{1}(\Omega )$ satsfaying $F(u)\in
(L_{loc}^{1}(\Omega ))^{N},b\in \beta (u)$ almost everywhere in $\Omega $
and
\begin{equation}\label{eq3.1}
b-div(a(x,Du)+F(u))=f\hspace*{2mm}in\hspace*{2mm}D^{\prime }(\Omega ).
\end{equation}
\end{definition}

\subsection{Renormalized solutions}

\begin{definition}
A renormalized solution of $(E,f)$ is a pair of functions $(u,b)$ satisfying
the following conditions:\newline

$(\mathbf{R}_{1})$ $u:\Omega \rightarrow \mathbb{R}$ is measurable, $b\in
L^{1}(\Omega ),u(x)\in D(\beta (x))$ and $b(x)\in \beta (u(x))$ for a.e. $%
x\in \Omega $.\newline

$(\mathbf{R}_{2})$ For each $k>0,T_{k}(u)\in W_{0}^{1,{\overrightarrow{p}}%
}(\Omega )$ and
\begin{equation}\label{eq3.2}
\int_{\Omega }b.h(u)\phi +\int_{\Omega }(a(x,D(u))+F(u)).D(h(u)\phi
)=\int_{\Omega }fh(u)\phi ,
\end{equation}
holds for all $h\in C_{c}^{1}(\mathbb{R})$ and all $\phi \in W_{0}^{1,{%
\overrightarrow{p}}}(\Omega )\cap L^{\infty }(\Omega )$.\newline

$(\mathbf{R}_{3})$ $\int_{\{k<|u|<k+1\}}a(x,Du).Du\longrightarrow 0$ as $%
k\longrightarrow \infty $.
\end{definition}

\section{Main results}

In this section, we will first state the existence and uniqueness of
renormalized solutions for $(E,f).$ Then, we will prove that the
renormalized solution of $(E,f)$ is a weak solution.

\begin{theorem}\label{4.1}
 For $f\in L^{1}(\Omega )$, there exists at least one
renormalized solution $(u,b)$ of $(E,f)$.
\end{theorem}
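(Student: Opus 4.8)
The strategy is the classical approximation-plus-compactness scheme. First I would regularize the data and the maximal monotone graph. Replace $f$ by a sequence $f_n\in L^\infty(\Omega)$ with $f_n\to f$ strongly in $L^1(\Omega)$ and $\|f_n\|_{L^1}\le\|f\|_{L^1}$ (e.g.\ $f_n=T_n(f)$), and replace $\beta$ by its Yosida approximation $\beta_\varepsilon$ (or simply work with the $L^\infty$ existence theorem that, per the introduction, is established in Section 5). For each $n$ one obtains a renormalized solution $(u_n,b_n)$ of $(E,f_n)$ with $b_n\in\beta(u_n)$; the goal is to pass to the limit $n\to\infty$.

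The heart of the argument is a family of a priori estimates. Using $T_k(u_n)$ (composed with suitable cut-offs $h_l$ from Section 2) as a test function in the renormalized formulation, the coercivity hypothesis $(\mathbf H_1)$ together with the Poincar\'e inequality \eqref{eq2.1} gives a bound
\begin{equation*}
\sum_{i=1}^N\int_\Omega|\partial_{x_i}T_k(u_n)|^{p_i}\,dx\le C\,k\,\|f\|_{L^1(\Omega)},
\end{equation*}
so that $\{T_k(u_n)\}_n$ is bounded in $W_0^{1,\overrightarrow p}(\Omega)$ for every fixed $k$; the contribution of $F(u_n)$ is absorbed using the local Lipschitz bound on $F$ and Young's inequality, and the monotonicity $0\in\beta(0)$ controls the $b_n$-term. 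From these truncated estimates one derives that $\{u_n\}$ is bounded in the Marcinkiewicz-type sense, hence $u_n\to u$ a.e.\ (up to a subsequence) after invoking the compact embedding \eqref{eq2.5} of Proposition \ref{2.1} on each truncation level and a diagonal argument; simultaneously $b_n\to b$ weakly in $L^1(\Omega)$ once equi-integrability of $\{b_n\}$ is checked, which follows from testing with $H_\sigma(u_n)$-type functions and the estimate $(\mathbf R_3)$. The uniform smallness of $\int_{\{k<|u_n|<k+1\}}a(x,Du_n)\cdot Du_n$ in $n$ and $k$ is extracted the same way.

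Next comes the standard but delicate step: almost-everywhere convergence of the gradients, $\partial_{x_i}T_k(u_n)\to\partial_{x_i}T_k(u)$ a.e., obtained via the Minty–Browder/monotonicity trick adapted to the anisotropic setting — one tests the difference of equations with $T_k(u_n)-T_k(u)$ multiplied by an appropriate truncation, uses $(\mathbf H_3)$ to show $\int_\Omega\big(a(x,DT_k(u_n))-a(x,DT_k(u))\big)\cdot D(T_k(u_n)-T_k(u))\to0$, and then concludes pointwise convergence by a Leray–Lions type lemma. With this in hand, $a(x,DT_k(u_n))\rightharpoonup a(x,DT_k(u))$ weakly in $\prod_iL^{p_i'}(\Omega)$ by $(\mathbf H_2)$ and Vitali, $F(u_n)\to F(u)$ in $(L^1_{loc})^N$, and one can pass to the limit in \eqref{eq3.2} to recover $(\mathbf R_2)$ for $(u,b)$; the closedness of the graph $\beta$ gives $b(x)\in\beta(u(x))$ a.e., i.e.\ $(\mathbf R_1)$, and passing to the limit in the energy-decay estimate yields $(\mathbf R_3)$.

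The main obstacle I anticipate is precisely the gradient almost-everywhere convergence together with the strong-$L^1$ convergence of the lower-order term $b_n$: because $\beta$ is only a maximal monotone \emph{graph} with possible jumps (the Stefan feature), the usual argument that controls $\int_{\{|u_n|\le k\}}$ versus $\int_{\{|u_n|>k\}}$ must be done carefully so that the "time-independent" analogue of the energy identity does not lose mass at the jump set $\{u=\text{const}\}$; this is where the auxiliary functions $h_l$, $H_\sigma$, $H_\sigma^+$ defined in Section 2 do the real work, and where anisotropy forces one to track each coordinate direction $i$ separately rather than using a single Sobolev norm. Everything else — the embeddings, the truncation estimates, the passage to the limit in the distributional formulation — is routine once these two points are secured.
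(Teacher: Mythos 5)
Your overall architecture is the one the paper follows: approximate by bounded data (the $L^\infty$ case being settled in Section 5 via the Yosida approximation and pseudomonotone operator theory), derive the truncation estimate $\sum_{i}\int_\Omega|\partial_{x_i}T_k(u_n)|^{p_i}\le \frac{k}{\gamma}\|f\|_1$ and the level-set bound $|\{|u_n|\ge l\}|\le C l^{\frac{1}{\overline p}-1}$, run the Minty/monotonicity trick to obtain $(\ref{eq6.21})$ and $(\ref{eq6.23})$, and close with a subdifferential/closed-graph argument for $b\in\beta(u)$ and a limit passage in the energy term for $(\mathbf R_3)$. This matches Lemmas \ref{6.1}, \ref{6.2}, \ref{6.4} and Remark \ref{6.3}.

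The genuine gap is in how you propose to obtain convergence of the selections $b_n$. You use a single-index approximation $f_n=T_n(f)$ and assert that equi-integrability of $\{b_n\}$ ``follows from testing with $H_\sigma(u_n)$-type functions''. That test only yields $\int_{\{|u_n|>k\}}|b_n|\le\int_{\{|u_n|>k\}}|f|$, i.e.\ smallness on the set where $u_n$ is large; it gives no uniform control of $\int_E|b_n|$ over arbitrary small sets $E$ on which $u_n$ stays bounded but approaches the boundary of $D(\beta)$, and for a multivalued $\beta$ with jumps the selections $b_n\in\beta(u_n)$ need not converge a.e.\ even where $u_n$ does, since they may oscillate inside a jump interval. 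The paper circumvents this entirely with the doubly indexed, doubly monotone approximation $f_{m,n}=\max(\min(f,m),-n)$: the comparison principle of Proposition \ref{5.2} and Remark \ref{5.3} makes $u_{m,n}$ and $b_{m,n}$ monotone in $m$ and in $n$ (see $(\ref{eq6.8})$--$(\ref{eq6.9})$), and together with the uniform bound $(\ref{eq6.333})$ this upgrades pointwise monotone convergence to \emph{strong} $L^1$ convergence of $b_{m,n}$ by monotone convergence, after which a diagonal subsequence $m(n)$ is extracted in Lemma \ref{6.4}. Without this order structure (or an additional hypothesis such as $D(\beta)=\mathbb R$, under which $\beta$ is locally bounded and your equi-integrability claim could be salvaged), your limit passage in the $b_n$-term and the identification $b\in\beta(u)$ are not justified. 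The remainder of your plan is sound and coincides with Sections 5 and 6 of the paper.
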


\begin{theorem}\label{4.2}
 Let $\beta :\Omega \times \mathbb{R}\rightarrow 2^{\mathbb{R}}$
be such that $\beta (x,.)$ is strictly monotone for almost every $x\in
\Omega $. For $f\in L^{1}(\Omega ),$ let $(u,b),(\widetilde{u},\widetilde{b})
$ be renormalized solutions of $(E,f)$. Then $u=\widetilde{u}$ and $b=%
\widetilde{b}.$
\end{theorem}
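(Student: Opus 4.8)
The plan is to prove uniqueness by a standard comparison argument adapted to the renormalized framework. Let $(u,b)$ and $(\widetilde u,\widetilde b)$ be two renormalized solutions associated to the same datum $f\in L^1(\Omega)$. The goal is to test the (renormalized) equation satisfied by $u$ against a suitable approximation of $\operatorname{sign}_0(u-\widetilde u)$, do the same for $\widetilde u$, and subtract, so that the monotonicity of $a$ in $\xi$ makes the diffusion term nonnegative, the convection term $F(u)-F(\widetilde u)$ disappears in the limit by Lipschitz continuity of $F$ and the fact that gradients coincide on level sets, and the strict monotonicity of $\beta(x,\cdot)$ forces $u=\widetilde u$ on the set $\{b\neq\widetilde b\}$, then $b=\widetilde b$ a.e.

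Concretely, first I would fix $\sigma>0$ and $l\in\mathbb N$ and use in $(\mathbf R_2)$ for the $u$-equation the renormalization $h=h_l$ together with the test function $\phi=H_\sigma(T_k(u)-T_k(\widetilde u))$ (note $T_k(u),T_k(\widetilde u)\in W_0^{1,\overrightarrow p}(\Omega)\cap L^\infty(\Omega)$ by $(\mathbf R_2)$, so $\phi$ is admissible), and symmetrically for the $\widetilde u$-equation. Subtracting the two identities gives three groups of terms: the $\beta$-terms $\int_\Omega (b\,h_l(u)-\widetilde b\,h_l(\widetilde u))\,H_\sigma(T_k u-T_k\widetilde u)$, the elliptic/convection terms involving $(a(x,Du)+F(u))\cdot D(h_l(u)\phi)-(a(x,D\widetilde u)+F(\widetilde u))\cdot D(h_l(\widetilde u)\phi)$, and the right-hand side $\int_\Omega f(h_l(u)-h_l(\widetilde u))H_\sigma(\cdots)$. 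The main work is a careful passage to the limit: first let $k\to\infty$ (controlled because $T_k(u)\to u$ and using $(\mathbf R_3)$ to kill the ``remainder'' terms living on $\{k<|u|<k+1\}$, which is exactly why $(\mathbf R_3)$ is built into the definition), then let $\sigma\to 0$ so that $H_\sigma(u-\widetilde u)\to\operatorname{sign}_0(u-\widetilde u)$ and $\frac1\sigma\mathbf 1_{\{|u-\widetilde u|\le\sigma\}}\,D(u-\widetilde u)$ contributes a nonnegative diffusion term that we simply discard, and finally let $l\to\infty$ so $h_l(u),h_l(\widetilde u)\to 1$. The convection term is handled by writing $F(u)-F(\widetilde u)$ and observing that on $\{|u-\widetilde u|\le\sigma\}$ we may use local Lipschitz continuity of $F$ plus the $L^{p_i}$ bounds on $\partial_{x_i}T_k u$; alternatively one integrates by parts to move the derivative off $F$, using that $H_\sigma'(s)\ge 0$.

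After these limits the inequality reduces to $\int_\Omega (b-\widetilde b)\,\operatorname{sign}_0(u-\widetilde u)\le 0$, i.e. $\int_\Omega |b-\widetilde b|\,\mathbf 1_{\{u\neq\widetilde u\}}\le 0$ after also noting that where $u=\widetilde u$ one must still argue $b=\widetilde b$. Here the strict monotonicity hypothesis on $\beta(x,\cdot)$ enters twice: if $u(x)\neq\widetilde u(x)$ then since $b(x)\in\beta(x,u(x))$, $\widetilde b(x)\in\beta(x,\widetilde u(x))$ strict monotonicity gives $(b(x)-\widetilde b(x))(u(x)-\widetilde u(x))>0$ unless $b(x)=\widetilde b(x)$, so the integrand $(b-\widetilde b)\operatorname{sign}_0(u-\widetilde u)=|b-\widetilde b|\ge 0$; combined with the reverse inequality just derived we get $b=\widetilde b$ a.e., and then strict monotonicity again forces $u=\widetilde u$ a.e. on $\{b=\widetilde b\}$ — but one must be slightly careful because $\beta(x,\cdot)$ may be multivalued, so instead I would directly conclude $u=\widetilde u$ a.e. from the fact that $\operatorname{sign}_0(u-\widetilde u)(b-\widetilde b)=0$ a.e. forces $\{u\neq\widetilde u\}$ to be null once we know the pairing is strictly positive there.

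I expect the main obstacle to be the rigorous justification of the limit passages, in particular controlling the ``truncation remainder'' terms (the ones supported on $\{k<|u|<k+1\}$ coming from $D h_l(u)=h_l'(u)Du$ and from $D T_k(u)$) uniformly enough to send $k\to\infty$ before $\sigma\to 0$; this is precisely where $(\mathbf R_3)$ and the coercivity $(\mathbf H_1)$ combined with the growth bound $(\mathbf H_2)$ are needed to show these remainders tend to $0$. A secondary technical point is ensuring $\phi=H_\sigma(T_k u - T_k\widetilde u)\in W_0^{1,\overrightarrow p}(\Omega)\cap L^\infty(\Omega)$ and that all integrals in $(\mathbf R_2)$ converge — this follows since $H_\sigma$ is Lipschitz and bounded, $T_k u, T_k\widetilde u\in W_0^{1,\overrightarrow p}(\Omega)$, and $h_l$ has compact support so $h_l(u)\in L^\infty$ with $D(h_l(u))$ supported where $|u|\le l+1$, reducing everything to truncated quantities that are genuinely in the energy space.
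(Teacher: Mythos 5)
Your strategy is essentially the one the paper follows: the paper first proves (its Lemma 6.6) the Kato-type inequality $\int_\Omega (b-\widetilde b)\,sign_0^+(u-\widetilde u)\le \int_\Omega (f-\widetilde f)\,sign_0^+(u-\widetilde u)$ by testing the renormalized formulation with $h=h_l$ and $\phi=H^+_\delta(T_{l+1}(u)-T_{l+1}(\widetilde u))$, discarding the nonnegative part of the diffusion term, using local Lipschitz continuity of $F$ for the convection term, and invoking $(\mathbf R_3)$ to pass $l\to\infty$. Your use of the two-sided $H_\sigma$/$\operatorname{sign}_0$ instead of the one-sided version is immaterial when $f=\widetilde f$, and your extra decoupled truncation parameter $k$ is harmless (on $\{|u|\le l+1\}\cap\{|T_ku-T_k\widetilde u|<\sigma\}$ one automatically has $|\widetilde u|<l+1+\sigma$, so the integrands stabilize for $k$ large; the paper simply ties $k$ to $l+1$ from the start). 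One technical point you gloss: the subtracted diffusion term is $\frac1\sigma\int(h_l(u)a(x,Du)-h_l(\widetilde u)a(x,D\widetilde u))\cdot D(\cdots)$, which is \emph{not} directly nonnegative because of the $h_l$-weights; you must split off $(h_l(u)-h_l(\widetilde u))a(x,D\widetilde u)\cdot D(\cdots)$ and kill it as $\sigma\to 0$ using $|h_l(u)-h_l(\widetilde u)|\le\sigma$ on the relevant set, exactly as for $F$.

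The genuine gap is at the very end. Your inequality $\int_\Omega(b-\widetilde b)\operatorname{sign}_0(u-\widetilde u)\le 0$, combined with monotonicity of $\beta$, gives that the integrand equals $|b-\widetilde b|\,\mathbf 1_{\{u\neq\widetilde u\}}$, hence $b=\widetilde b$ a.e.\ \emph{on} $\{u\neq\widetilde u\}$, and strict monotonicity then forces $|\{u\neq\widetilde u\}|=0$, i.e.\ $u=\widetilde u$. But this says nothing about $b$ on the (full-measure) set $\{u=\widetilde u\}$: since $\beta(x,\cdot)$ is multivalued, $b(x)$ and $\widetilde b(x)$ can a priori be two different elements of the same set $\beta(x,u(x))$ (think of a jump of $\beta$ at $u(x)$), and "strictly monotone" does not exclude this. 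Your sentence "combined with the reverse inequality we get $b=\widetilde b$ a.e." therefore does not follow from what you derived. The missing step, which the paper supplies, is to go back to the renormalized formulation for both solutions \emph{after} knowing $u=\widetilde u$: subtracting the two identities, all the $a$- and $F$-terms cancel and one is left with $\int_\Omega(b-\widetilde b)h(u)\varphi=0$ for all admissible $h,\varphi$; taking $h=h_l$ and letting $l\to\infty$ yields $b=\widetilde b$ a.e. With that addition your argument closes.
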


\begin{proposition}\label{4.3}
 Let $(u,b)$ be a renormalized solution of $(E,f)$ for $f\in
L^{\infty }(\Omega )$. Then $u\in W^{1,{\overrightarrow{p}}}(\Omega )\cup
L^{\infty }(\Omega )$ and thus, in particular, $u$ is a weak solution of $%
(E,f)$.
\end{proposition}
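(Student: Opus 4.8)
\textbf{Proof plan for Proposition \ref{4.3}.}
The plan is to exploit the boundedness of $f$ to upgrade the a priori estimates on the truncations $T_k(u)$ to a uniform bound, thereby showing $u\in L^\infty(\Omega)$, and then to feed this $L^\infty$ information back into $(\mathbf{R}_2)$ and $(\mathbf{R}_3)$ to conclude that $u$ itself lies in $W^{1,\overrightarrow{p}}_0(\Omega)$ and satisfies \eqref{eq3.1} in $D'(\Omega)$. First I would test the renormalized formulation $(\mathbf{R}_2)$ with an admissible pair built from $T_k$: take $\phi=T_k(u)$ and $h=h_l$ with $l$ large, then let $l\to\infty$ so that $h_l(u)\to 1$ and $h_l'(u)\to 0$ on the support of $T_k(u)$; using $(\mathbf{R}_1)$ (so $b\in\beta(u)$ with $0\in\beta(0)$, hence $b\,T_k(u)\ge 0$) together with the coercivity assumption $(\mathbf{H}_1)$, this yields
\begin{equation*}
\lambda\sum_{i=1}^N\int_{\{|u|<k\}}|\partial_{x_i}u|^{p_i}\le \int_\Omega f\,T_k(u)\le k\,\|f\|_{L^1(\Omega)}.
\end{equation*}
Since $f\in L^\infty(\Omega)$ one can do better: I would instead use a test function adapted to level sets, e.g. $\phi=G_k(u):=u-T_k(u)$ truncated at height one (the function $\tfrac{1}{1}(T_{k+1}(u)-T_k(u))$ works), and combine the coercivity bound with the Sobolev–Poincar\'e inequality \eqref{eq2.3} to get a Stampacchia-type iteration on the measure of the super-level sets $\{|u|>k\}$; the superlinear gain coming from $\overline{p}^*>1$ forces $|\{|u|>k\}|=0$ for $k$ large, i.e. $\|u\|_{L^\infty(\Omega)}\le M$ for some $M$ depending only on $\|f\|_{L^\infty}$, $\lambda$, $N$, the $p_i$ and $|\Omega|$.

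Once $u\in L^\infty(\Omega)$, say $|u|\le M$ a.e., the condition $(\mathbf{R}_2)$ applied with $T_{M+1}(u)=u$ shows $u\in W^{1,\overrightarrow{p}}_0(\Omega)$: indeed the estimate from the first step with $k=M+1$ gives $\sum_i\int_\Omega|\partial_{x_i}u|^{p_i}<\infty$, and since $T_{M+1}(u)\in W^{1,\overrightarrow{p}}_0(\Omega)$ by $(\mathbf{R}_2)$ and equals $u$, we are done. Moreover $(\mathbf{R}_3)$ becomes vacuous for $k\ge M$ since $\{k<|u|<k+1\}=\emptyset$, so there is no obstruction there. Then I choose in $(\mathbf{R}_2)$ the renormalizing function $h=h_l$ with $l\ge M$, so that $h_l(u)\equiv 1$ and $h_l'(u)\equiv 0$ a.e.; the chain rule gives $D(h_l(u)\phi)=h_l(u)D\phi = D\phi$, and \eqref{eq3.2} collapses to
\begin{equation*}
\int_\Omega b\,\phi+\int_\Omega\bigl(a(x,Du)+F(u)\bigr)\cdot D\phi=\int_\Omega f\,\phi
\end{equation*}
for all $\phi\in W^{1,\overrightarrow{p}}_0(\Omega)\cap L^\infty(\Omega)$, in particular for all $\phi\in C_c^\infty(\Omega)$, which is exactly \eqref{eq3.1} in the sense of distributions; note that $F(u)\in (L^\infty(\Omega))^N\subset (L^1_{loc}(\Omega))^N$ since $F$ is locally Lipschitz and $u$ is bounded, and $a(x,Du)\in\prod_i L^{p_i'}(\Omega)$ by the growth bound $(\mathbf{H}_2)$. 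Together with $b\in L^1(\Omega)$ and $b\in\beta(u)$ from $(\mathbf{R}_1)$, this shows $(u,b)$ is a weak solution in the sense of the first definition, completing the proof.

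The main obstacle is the first step: extracting a genuine $L^\infty$ bound on $u$ rather than merely the $O(k)$ growth of the energy on $\{|u|<k\}$. This requires the Stampacchia truncation iteration in the anisotropic setting, so one must be careful that the Sobolev embedding \eqref{eq2.3} (valid under the standing hypothesis $\overline{p}<N$, condition \eqref{eq2.4}) is applied with an exponent $q>1$ large enough that the iteration on $|\{|u|>k\}|$ converges; the anisotropy means the natural energy controls $\sum_i\|\partial_{x_i}u\|_{L^{p_i}}$ only through the weakest exponent $p^-$, so one works with the embedding into $L^{\overline{p}^*}$ and tracks constants depending on all the $p_i$. Once the $L^\infty$ bound is in hand, the remaining steps are essentially formal manipulations of the renormalized formulation.
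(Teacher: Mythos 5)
Your overall architecture is right, and the second half of your argument is exactly what the paper leaves implicit: once $u\in L^{\infty}(\Omega)$ with $|u|\le M$, one has $u=T_{M+1}(u)\in W^{1,\overrightarrow{p}}_0(\Omega)$ by $(\mathbf{R}_2)$, the condition $(\mathbf{R}_3)$ is vacuous for large $k$, and taking $h=h_l$ with $l\ge M$ (so $h_l(u)\equiv 1$, $h_l'(u)\equiv 0$) collapses \eqref{eq3.2} to \eqref{eq3.1}. The problem is in the first step, which you correctly identify as the main obstacle but do not actually overcome. Testing with $T_d(u-T_k(u))$ and using $(\mathbf{H}_1)$ gives $\sum_i\int_\Omega|\partial_{x_i}T_d(u-T_k(u))|^{p_i}\le d\|f\|_\infty|A_k|/\lambda$ with $A_k=\{|u|>k\}$, hence $\|\partial_{x_i}T_d(u-T_k(u))\|_{L^{p_i}}\le (d\|f\|_\infty|A_k|/\lambda)^{1/p_i}$; when you insert this into \eqref{eq2.3}, the term that dominates for $|A_k|$ small is the one with $p_i=p^+$, so the recursion you obtain is of the form $|A_{k+d}|\lesssim d^{-\alpha}|A_k|^{\overline{p}^{\ast}/p^{+}}$. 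Stampacchia's lemma then forces $|A_k|=0$ at a finite level only if the exponent $\overline{p}^{\ast}/p^{+}$ is strictly greater than $1$; the condition you invoke, ``$\overline{p}^{\ast}>1$'', is not the relevant one. Since the paper explicitly allows $\overline{p}^{\ast}<p^{+}$ (it even introduces $p_\infty=\max\{\overline{p}^{\ast},p^{+}\}$ for that reason), your iteration proves boundedness only under the extra restriction $\overline{p}^{\ast}>p^{+}$ on the spread of the exponents, which is not among the hypotheses of the proposition.

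The paper's Section 7 avoids this by a different mechanism. It tests with $h_l(u)\frac{1}{\varepsilon}T_\varepsilon(u-T_k(u))$ to get \eqref{eq7.1}, then applies the \emph{isotropic} embedding $W^{1,1}_0(\Omega)\hookrightarrow L^{N/(N-1)}(\Omega)$ together with a H\"older inequality of exponent $p^{-}$ on the shell $\{k<|u|<k+\varepsilon\}$. The measure of that shell, $\phi(k)-\phi(k+\varepsilon)$ with $\phi(k)=|\{|u|>k\}|$, is split off by Young's inequality and survives the limit $\varepsilon\downarrow 0$ as the derivative $-\phi'(k)$, yielding the differential inequality \eqref{eq7.6}, i.e. $\frac{d}{dk}\phi(k)^{1/N}\le -C''/N$; hence $\phi$ vanishes at a finite level with no condition relating $p^{+}$ and $\overline{p}^{\ast}$. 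To repair your proof you would either need to add the hypothesis $\overline{p}^{\ast}>p^{+}$, or replace the discrete iteration by this differential inequality on the distribution function (or some equivalent device that exploits the shell measure rather than only the measure of the super-level set).
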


To prove Theorem \ref{4.1}, we will introduce and solve approximation
problems. To this end, for $f\in L^{1}(\Omega )$ and $m,n\in \mathbb{N}$ we
define $f_{m,n}:\Omega \rightarrow \mathbb{R}$ by
\begin{equation*}
f_{m,n}(x)=\displaystyle max(min(f(x),m),-n)
\end{equation*}
for almost every $x\in \Omega $. Clearly, $f_{m,n}\in L^{\infty }(\Omega )$
for each $m,n\in \mathbb{N},$ $|f_{m,n}(x)|\leq |f(x)|$ a,e. in $\Omega $,
hence $\displaystyle\lim_{n\rightarrow \infty }\lim_{m\rightarrow \infty
}f_{m,n}=f$ in $L^{1}(\Omega )$ and almost everywhere in $\Omega $. The next
theorem will give us existence of renormalized solutions $(u_{m,n},b_{m,n})$
of $(E,f_{m,n})$ for each $m,n\in \Omega $.

\section{Case where $f\in L^{\infty }(\Omega )$}

\begin{theorem}\label{5.1}
\label{5.1} For $f\in L^{\infty }(\Omega )$, there exists at least one
renormalized solution $(u,b)$ of $(E,f)$.
\end{theorem}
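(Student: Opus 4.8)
The plan is to prove Theorem \ref{5.1} by a double approximation/regularization scheme that removes both the multivalued nature of $\beta$ and the mere monotonicity (rather than strict monotonicity/coercivity) of the lower-order term, then pass to the limit using the monotonicity method together with the compact embedding from Proposition \ref{2.1}. First I would regularize $\beta$ by its Yosida approximation $\beta_\varepsilon$ (single-valued, Lipschitz, monotone, with $\beta_\varepsilon(0)=0$), and simultaneously add a coercive perturbation $\varepsilon \sum_i |\partial_{x_i} u|^{p_i-2}\partial_{x_i}u$ (or an $\varepsilon$-multiple of the duality map of $W_0^{1,\overrightarrow p}(\Omega)$) to the principal part so that the perturbed operator $u \mapsto \beta_\varepsilon(u) - \operatorname{div}(a(x,Du)+F(u)) + \varepsilon A_\varepsilon u$ becomes a bounded, coercive, pseudomonotone operator from $W_0^{1,\overrightarrow p}(\Omega)$ to its dual. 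Since $f\in L^\infty(\Omega)\subset W^{-1,\overrightarrow{p'}}(\Omega)$ (using the embedding $W_0^{1,\overrightarrow p}(\Omega)\hookrightarrow L^{p_\infty}(\Omega)$ from Proposition \ref{2.1} and duality), the surjectivity theorem for pseudomonotone coercive operators yields a solution $u_\varepsilon \in W_0^{1,\overrightarrow p}(\Omega)$.

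Next I would derive the a priori estimates uniform in $\varepsilon$. Testing the approximate equation with $u_\varepsilon$ itself and using $(\mathbf H_1)$, $\beta_\varepsilon(u_\varepsilon)u_\varepsilon \ge 0$, the local Lipschitz bound on $F$ together with Young's inequality to absorb the $F(u_\varepsilon)\cdot Du_\varepsilon$ term, and the Poincaré/Sobolev inequalities \eqref{eq2.1}--\eqref{eq2.3}, gives a bound on $\sum_i \|\partial_{x_i}u_\varepsilon\|_{L^{p_i}(\Omega)}$, hence on $\|u_\varepsilon\|_{W_0^{1,\overrightarrow p}(\Omega)}$, and on $\varepsilon^{1/p_i}\|\partial_{x_i}u_\varepsilon\|_{L^{p_i}}$. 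To control $b_\varepsilon := \beta_\varepsilon(u_\varepsilon)$ in $L^1(\Omega)$, I would test with $H_\sigma(u_\varepsilon)$ (or with $\operatorname{sign}$-type truncations $T_1(u_\varepsilon)/\text{something}$), letting $\sigma\to 0$, which produces $\|\beta_\varepsilon(u_\varepsilon)\|_{L^1(\Omega)}\le \|f\|_{L^1(\Omega)}$; combined with $\|f\|_{L^\infty}$ one also gets, via testing with $T_k$-type functions, the behavior needed for $(\mathbf R_3)$, namely $\int_{\{k<|u_\varepsilon|<k+1\}} a(x,Du_\varepsilon)\cdot Du_\varepsilon \to 0$ uniformly.

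Then I would pass to the limit $\varepsilon\to 0$. By the uniform bounds, up to a subsequence $u_\varepsilon \rightharpoonup u$ weakly in $W_0^{1,\overrightarrow p}(\Omega)$ and strongly in $L^q(\Omega)$ for $q<p_\infty$ (Proposition \ref{2.1}), hence a.e.; $\varepsilon A_\varepsilon u_\varepsilon \to 0$ in the dual; $\beta_\varepsilon(u_\varepsilon)$ is equi-integrable (from the $L^1$ bound plus a standard truncation argument showing $\int_{\{|u_\varepsilon|>k\}}|\beta_\varepsilon(u_\varepsilon)|\to 0$ uniformly), so $b_\varepsilon \to b$ weakly in $L^1(\Omega)$, and the maximal monotone graph closure (Minty/convergence of graphs) gives $b(x)\in\beta(u(x))$ a.e. The crux — and what I expect to be the main obstacle — is establishing the almost-everywhere convergence of the gradients $Du_\varepsilon \to Du$ so that one can identify the weak limit of $a(x,Du_\varepsilon)$ with $a(x,Du)$; since $a$ is only monotone (not strictly monotone) via $(\mathbf H_3)$, one cannot argue naively. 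The standard remedy is the Boccardo--Murat type argument: test the difference of the equations with $T_\eta(u_\varepsilon - T_k(u))$, use $(\mathbf H_3)$ to show $\limsup_\varepsilon \int (a(x,Du_\varepsilon)-a(x,Du))\cdot(Du_\varepsilon-Du)\,\chi_{\{|u|<k\}} \le 0$, deduce strong convergence of the truncated gradients in the anisotropic sense, extract an a.e.-convergent subsequence, and conclude. Finally, with $Du_\varepsilon \to Du$ a.e. and the growth bound $(\mathbf H_2)$ giving equi-integrability of $a(x,Du_\varepsilon)h(u_\varepsilon)$, I pass to the limit in the renormalized formulation \eqref{eq3.2} tested against $h(u_\varepsilon)\phi$ with $h\in C_c^1(\mathbb R)$, $\phi\in W_0^{1,\overrightarrow p}(\Omega)\cap L^\infty(\Omega)$, using $h(u_\varepsilon)\to h(u)$ strongly (bounded support), and obtain $(\mathbf R_1)$--$(\mathbf R_3)$ for $(u,b)$, completing the proof.
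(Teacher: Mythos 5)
Your overall architecture (Yosida regularization of $\beta$, truncation of the nonlinearities, surjectivity of a pseudomonotone coercive operator, a priori estimates, limit passage, identification of $b\in\beta(u)$ by maximal monotonicity) matches the paper's. But two steps as you describe them would fail under the paper's hypotheses.

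First, the a priori estimate. You propose to control $\int_\Omega F(u_\varepsilon)\cdot Du_\varepsilon$ by the local Lipschitz bound on $F$ plus Young's inequality. Since $F$ is only \emph{locally} Lipschitz, it has no global growth bound, and even after replacing $F$ by the bounded function $F\circ T_{1/\varepsilon}$ the sup norm blows up as $\varepsilon\to 0$, so absorption by Young's inequality yields only an $\varepsilon$-dependent bound on $|||u_\varepsilon|||$ --- useless for the compactness step. The correct (and standard) mechanism, which the paper uses, is that the convection integral \emph{vanishes identically}: writing $\Phi(r)=\int_0^r F(T_{1/\varepsilon}(s))\,ds$ one has $F(T_{1/\varepsilon}(u_\varepsilon))\cdot Du_\varepsilon=\operatorname{div}\Phi(u_\varepsilon)$, and the Gauss--Green theorem for $W_0^{1,\overrightarrow{p}}$ functions gives $\int_\Omega \operatorname{div}\Phi(u_\varepsilon)\,dx=0$. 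Relatedly, for $b_\varepsilon=\beta_\varepsilon(T_{1/\varepsilon}(u_\varepsilon))$ you only extract an $L^1$ bound and argue equi-integrability; the paper instead tests with $\tfrac1\sigma[T_{k+\sigma}(b_\varepsilon)-T_k(b_\varepsilon)]$ and obtains the much stronger $\|b_\varepsilon\|_\infty\le\|f\|_\infty$, which is the natural estimate for $L^\infty$ data, gives weak-$*$ convergence in $L^\infty$ directly, and is what the subsequent subdifferential argument leans on.

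Second, and more seriously, the identification of the weak limit of $a(x,Du_\varepsilon)$. You correctly note that $(\mathbf H_3)$ is not strict, but your proposed remedy --- the Boccardo--Murat argument yielding a.e.\ convergence of the gradients from $\limsup_\varepsilon\int (a(x,Du_\varepsilon)-a(x,Du))\cdot(Du_\varepsilon-Du)\le 0$ --- is precisely the argument that \emph{requires} strict monotonicity: if the integrand can vanish for $\xi\neq\eta$, its convergence to zero does not force $Du_\varepsilon\to Du$ a.e. The paper's own example exhibits an admissible $a$ for which strictness genuinely fails, so this route is closed. What works with mere monotonicity is Minty's trick, which is what the paper does (Step 3 of Lemma \ref{5.6}): having established $\limsup_\varepsilon\int_\Omega a(x,DT_k(u_\varepsilon))\cdot(DT_k(u_\varepsilon)-DT_k(u))\le 0$, one writes the monotonicity inequality against the competitor $T_k(u)-\alpha\phi$, passes to the limit, divides by $\alpha$ of either sign and lets $\alpha\to 0$ to conclude $\Phi_k=a(x,DT_k(u))$ in $\mathcal D'(\Omega)$, without ever proving pointwise convergence of the gradients. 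You should replace your gradient-convergence step by this argument; the rest of your limit passage (renormalized test functions $h_l(u_\varepsilon)h(u)\varphi$, then $l\to\infty$ using the decay of $\int_{\{l<|u_\varepsilon|<l+1\}}a\cdot Du_\varepsilon$) is in line with the paper.
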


The following section will be devoted to prove of Theorem \ref{5.1}, and we
will divide it into several steps.

\subsection{Approximate solution for $L^\infty$- data}

First we will introduce the approximate problem to $(E,f)$ for $f\in
L^{\infty }(\Omega )$ and for which the existence can be proved by standard
variational arguments. For $0<\varepsilon \leq 1,$ let $\beta _{\varepsilon
}:\mathbb{R}\longmapsto \mathbb{R}$ be the Yosida approximation of $\beta $
(see \cite{ref:8}). We introduce the operators

\begin{equation*}
\mathit{A}_{1,\varepsilon }:W_{0}^{1,{\overrightarrow{p}}}(\Omega
)\rightarrow W^{-1,{\overrightarrow{p^{\prime }}}}(\Omega ),
\end{equation*}
\begin{equation*}
u\rightarrow \beta _{\varepsilon }(T_{1/\varepsilon }(u))+\varepsilon
\arctan (u)-diva(x,Du)
\end{equation*}
and
\begin{equation*}
\mathit{A}_{2,\varepsilon }:W_{0}^{1,{\overrightarrow{p}}}(\Omega
)\rightarrow W^{-1,{\overrightarrow{p^{\prime }}}}(\Omega ),
\end{equation*}
\begin{equation*}
u\rightarrow -divF(T_{1/\varepsilon }(u)).
\end{equation*}
Because of $(\mathbf{H}_{2})-(\mathbf{H}_{3})$ , $\mathit{A}_{1/\varepsilon
} $ is well-defined and monotone (see \cite{ref:15} for instance).\newline
Since $\beta _{\varepsilon }\circ T_{1/\varepsilon }$ is bounded and
continuous and thanks to the growth condition $(H_{2})$ on $a$, it follows
that $\mathit{A}_{1,\varepsilon }$ is hermicontinuous (see \cite{ref:15}).
From the continuity and boundedness of $F\circ T_{1/\varepsilon },$ it
follows that $\mathit{A}_{2,\varepsilon }$ is strongly continuous. Therefore
the operator $\mathit{A}_{\varepsilon }:=\mathit{A}_{1,\varepsilon }+\mathit{%
A}_{2,\varepsilon }$ is pseudomonotone. Using the monotonicity of $\beta
_{\varepsilon }$, the Gauss-Green Theorem for Sobolev functions and the
boundary condition on the convection term $\int_{\Omega }F(T_{1/\varepsilon
}(u)).Du$, we show by using similar arguments as in \cite{ref:4} that $%
\mathit{A}_{\varepsilon }$ is coercive and bounded. Then it follows from \cite{ref:15} Theorem $2.7$ that $\mathit{A}_{\varepsilon }$ is surjective, i.e., for each
$0<\varepsilon \leq 1$ and $f\in W^{-1,{\overrightarrow{p^{\prime }}}%
}(\Omega )$ there exists a solution $u_{\varepsilon }\in W_{0}^{1,{%
\overrightarrow{p}}}(\Omega )$ of the problem
\begin{equation*}
(E_{\varepsilon },f)\left\{
\begin{array}{l@{~}l}
\displaystyle\beta _{\varepsilon }(T_{1/\varepsilon }(u_{\varepsilon
}))+\varepsilon \arctan (u_{\varepsilon })-div(a(x,Du_{\varepsilon
})+F((T_{1/\varepsilon }(u_{\varepsilon }))))=f & \text{in }\Omega , \\
u_{\varepsilon }=0 & \text{on }\partial \Omega ,
\end{array}
\right.
\end{equation*}

such that the following inequality holds for all $\phi \in W_{0}^{1,{%
\overrightarrow{p}}}(\Omega )$%
\begin{equation}\label{eq5.1}
\int_{\Omega }(\beta _{\varepsilon }(T_{1/\varepsilon }(u_{\varepsilon
}))+\varepsilon \arctan (u))\phi +\int_{\Omega }(a(x,D(u_{\varepsilon
}))+F(T_{1/\varepsilon }(u_{\varepsilon }))).D\phi =<f,\phi>
\end{equation}
where $<.,.>$ denotes the duality pairing between $W_{0}^{1,{\overrightarrow{%
p}}}(\Omega )$ and $W^{-1,{\overrightarrow{p^{\prime }}}}(\Omega ).$

\begin{proposition}\label{5.2}
For $0<\varepsilon \leq 1$ fixed and $f,\widetilde{f}\in L^{\infty }(\Omega )
$, let $u_{\varepsilon },\widetilde{u}_{\varepsilon }\in W_{0}^{1,{%
\overrightarrow{p}}}(\Omega )$ be solutions of $(E_{\varepsilon },f)$ and $%
(E_{\varepsilon },\widetilde{f})$, respectively, then, the follwing
comparison principle holds:

\begin{equation}\label{eq5.2}
\varepsilon \int_{\Omega }(\arctan (u_{\varepsilon })-\arctan (\widetilde{u}%
_{\varepsilon }))^{+}\leq \int_{\Omega }(f-\widetilde{f})sign_{0}^{+}(u_{%
\varepsilon }-\widetilde{u}_{\varepsilon }).
\end{equation}
\end{proposition}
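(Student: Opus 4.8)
The plan is to subtract the two weak formulations \eqref{eq5.1} for $u_\varepsilon$ and $\widetilde u_\varepsilon$ and test against a suitable approximation of $\mathrm{sign}_0^+(u_\varepsilon-\widetilde u_\varepsilon)$. Since $u_\varepsilon-\widetilde u_\varepsilon\in W_0^{1,\overrightarrow p}(\Omega)$ is an admissible test function, the natural choice is $\phi=H_\sigma^+(u_\varepsilon-\widetilde u_\varepsilon)$, which lies in $W_0^{1,\overrightarrow p}(\Omega)\cap L^\infty(\Omega)$ by the chain rule (it is Lipschitz, vanishes at $0$, and has a compactly supported derivative), and then let $\sigma\to 0^+$. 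Writing $w:=u_\varepsilon-\widetilde u_\varepsilon$, subtraction yields
\[
\int_\Omega\big(\beta_\varepsilon(T_{1/\varepsilon}(u_\varepsilon))-\beta_\varepsilon(T_{1/\varepsilon}(\widetilde u_\varepsilon))\big)H_\sigma^+(w)
+\varepsilon\int_\Omega(\arctan(u_\varepsilon)-\arctan(\widetilde u_\varepsilon))H_\sigma^+(w)
\]
\[
+\int_\Omega\big(a(x,Du_\varepsilon)-a(x,D\widetilde u_\varepsilon)\big)\cdot D\big(H_\sigma^+(w)\big)
+\int_\Omega\big(F(T_{1/\varepsilon}(u_\varepsilon))-F(T_{1/\varepsilon}(\widetilde u_\varepsilon))\big)\cdot D\big(H_\sigma^+(w)\big)
=\int_\Omega(f-\widetilde f)H_\sigma^+(w).
\]

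The first term is nonnegative for every $\sigma>0$: since $\beta_\varepsilon$ and $T_{1/\varepsilon}$ are both nondecreasing, $\beta_\varepsilon(T_{1/\varepsilon}(u_\varepsilon))-\beta_\varepsilon(T_{1/\varepsilon}(\widetilde u_\varepsilon))$ has the same sign as $w$, while $H_\sigma^+(w)\ge 0$ and is supported where $w>0$. The elliptic term $\int_\Omega(a(x,Du_\varepsilon)-a(x,D\widetilde u_\varepsilon))\cdot D(H_\sigma^+(w))$ equals $\tfrac1\sigma\int_{\{0<w<\sigma\}}(a(x,Du_\varepsilon)-a(x,D\widetilde u_\varepsilon))\cdot Dw$, which is nonnegative by the monotonicity assumption $(\mathbf H_3)$. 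Dropping these two nonnegative terms gives
\[
\varepsilon\int_\Omega(\arctan(u_\varepsilon)-\arctan(\widetilde u_\varepsilon))H_\sigma^+(w)
\le\int_\Omega(f-\widetilde f)H_\sigma^+(w)
-\int_\Omega\big(F(T_{1/\varepsilon}(u_\varepsilon))-F(T_{1/\varepsilon}(\widetilde u_\varepsilon))\big)\cdot D\big(H_\sigma^+(w)\big).
\]

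It remains to pass to the limit $\sigma\to0^+$ and to show the convection contribution disappears. For the convection term, note $D(H_\sigma^+(w))=\tfrac1\sigma\mathbf 1_{\{0<w<\sigma\}}Dw$, so the integral is $\tfrac1\sigma\int_{\{0<w<\sigma\}}(F(T_{1/\varepsilon}(u_\varepsilon))-F(T_{1/\varepsilon}(\widetilde u_\varepsilon)))\cdot Dw$; since $F\circ T_{1/\varepsilon}$ is Lipschitz, this is a Stampacchia-type term of the form $\tfrac1\sigma\int_{\{0<w<\sigma\}} g(x)\cdot Dw$ with $g\in(L^\infty)^N$ — the main technical point is to show it tends to $0$, which follows because $Dw\,\mathbf 1_{\{0<w<\sigma\}}\to 0$ in $(L^1)^N$ (indeed $D(H_\sigma^+(w))\rightharpoonup 0$ weakly, or one argues via $w^+\in W_0^{1,\overrightarrow p}$ and dominated convergence on $\nabla\big((w-\tfrac\sigma2)^+\wedge\tfrac\sigma2\big)$). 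On the right-hand side, $H_\sigma^+(w)\to\mathrm{sign}_0^+(w)$ pointwise and boundedly, so $\int_\Omega(f-\widetilde f)H_\sigma^+(w)\to\int_\Omega(f-\widetilde f)\,\mathrm{sign}_0^+(u_\varepsilon-\widetilde u_\varepsilon)$ by dominated convergence ($f,\widetilde f\in L^\infty\subset L^1$). On the left, $(\arctan(u_\varepsilon)-\arctan(\widetilde u_\varepsilon))H_\sigma^+(w)\to(\arctan(u_\varepsilon)-\arctan(\widetilde u_\varepsilon))^+$ pointwise and boundedly (the bracket has the sign of $w$), so by dominated convergence we obtain \eqref{eq5.2}. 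The main obstacle is the rigorous vanishing of the convection term as $\sigma\to0$; I expect this to follow from a standard argument showing $\nabla w\,\mathbf 1_{\{0<w<\sigma\}}\to 0$ in $L^1$, using that $w^+$ truncated is in $W_0^{1,\overrightarrow p}(\Omega)$ together with the absolute continuity of the integral.
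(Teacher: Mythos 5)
Your strategy is exactly the paper's: test the difference of the two weak formulations \eqref{eq5.1} with $H_\sigma^+(u_\varepsilon-\widetilde u_\varepsilon)$, discard the nonnegative $\beta_\varepsilon$ and elliptic contributions using monotonicity and $(\mathbf{H}_{3})$, and let $\sigma\to0$. The paper's own proof is a one-line version of this, so your write-up is more complete on every term except one --- and on that one term the justification you give does not close.

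The issue is the convection term
\begin{equation*}
I^4_\sigma=\frac1\sigma\int_{\{0<w<\sigma\}}\big(F(T_{1/\varepsilon}(u_\varepsilon))-F(T_{1/\varepsilon}(\widetilde u_\varepsilon))\big)\cdot Dw,\qquad w:=u_\varepsilon-\widetilde u_\varepsilon .
\end{equation*}
You bound the bracket only by an $L^\infty$ function $g$ and then invoke $Dw\,\mathbf 1_{\{0<w<\sigma\}}\to0$ in $L^1$; that yields $|I^4_\sigma|\le\sigma^{-1}\|g\|_\infty\,o(1)$, which is inconclusive because of the factor $\sigma^{-1}$. The auxiliary claim that $D(H_\sigma^+(w))\rightharpoonup0$ is also false in general: in one dimension with $w(x)=x$ one has $D(H_\sigma^+(w))=\sigma^{-1}\mathbf 1_{(0,\sigma)}\rightharpoonup\delta_0$ and $\sigma^{-1}\int_{\{0<w<\sigma\}}|Dw|\equiv1$, so neither the weak convergence to $0$ nor the decay of $\sigma^{-1}\int_{\{0<w<\sigma\}}|Dw|$ can be taken for granted. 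The correct mechanism is the ingredient you already name but do not exploit: $F\circ T_{1/\varepsilon}$ is Lipschitz with some constant $L_\varepsilon$, and on the set $\{0<w<\sigma\}$ one has $|T_{1/\varepsilon}(u_\varepsilon)-T_{1/\varepsilon}(\widetilde u_\varepsilon)|\le|w|<\sigma$, so the bracket is pointwise bounded by $L_\varepsilon\sigma$ there; this cancels the $\sigma^{-1}$ and gives
\begin{equation*}
|I^4_\sigma|\le L_\varepsilon\int_{\{0<w<\sigma\}}|Dw|\longrightarrow0
\end{equation*}
as $\sigma\downarrow0$, by dominated convergence, since $\{0<w<\sigma\}\downarrow\emptyset$ and $Dw\in L^1(\Omega)^N$. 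With this replacement the rest of your argument (nonnegativity of the $\beta_\varepsilon$ and elliptic terms, dominated convergence for the $\arctan$ and data terms) is correct and yields \eqref{eq5.2}.
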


\begin{proof}
We use the test function $\varphi =H_{\sigma }^{+}(u_{\varepsilon }-%
\widetilde{u}_{\varepsilon })$ in the weak formulation $(\ref{eq5.1})$ for $%
u_{\varepsilon }$ and $\widetilde{u}_{\varepsilon }$. Substracting the
resulting inequalties, we obtain
\begin{equation*}
I_{l,\delta }^{1}+I_{l,\delta }^{2}+I_{l,\delta }^{3}+I_{l,\delta
}^{4}=I_{l,\delta }^{5},
\end{equation*}
where

\begin{equation*}
I_{l,\delta }^{1}=\int_{\Omega }(\beta _{\varepsilon }T_{\frac{1}{%
\varepsilon }}(u_{\varepsilon })-\beta _{\varepsilon }(T_{\frac{1}{%
\varepsilon }}(\widetilde{u}_{\varepsilon })))H_{\delta }^{+}(u_{\varepsilon
}-\widetilde{u}_{\varepsilon })
\end{equation*}
\begin{equation*}
I_{l,\delta }^{2}=\int_{\Omega }(\varepsilon \arctan (u_{\varepsilon
})-\varepsilon \arctan (\widetilde{u}_{\varepsilon }))H_{\delta
}^{+}(u_{\varepsilon }-\widetilde{u}_{\varepsilon }),
\end{equation*}
\begin{equation*}
I_{l,\delta }^{3}=\int_{\Omega }a(x,Du_{\varepsilon })-a(x,D\widetilde{u}%
_{\varepsilon }).DH_{\delta }^{+}(u_{\varepsilon }-\widetilde{u}%
_{\varepsilon }),
\end{equation*}
\begin{equation*}
I_{l,\delta }^{4}=\int_{\Omega }(F(T_{\frac{1}{\varepsilon }}(u_{\varepsilon
}))-F(T_{\frac{1}{\varepsilon }}(\widetilde{u}_{\varepsilon }))).DH_{\delta
}^{+}(u_{\varepsilon }-\widetilde{u}_{\varepsilon })
\end{equation*}

\begin{equation*}
I_{l,\delta }^{5}=\int_{\Omega }(f-\widetilde{f})H_{\delta
}^{+}(u_{\varepsilon }-\widetilde{u}_{\varepsilon }).
\end{equation*}
Passing to the limit with $\sigma \rightarrow 0$, $(\ref{eq5.2})$ follows.
\end{proof}

\begin{remark}\label{5.3}
Let $f,\widetilde{f}\in L^{\infty }(\Omega )$ be such that $f\leq \widetilde{%
f}$ almost everywhere in $\Omega ,$ $\varepsilon >0$ and $u_{\varepsilon },$
$\widetilde{u}_{\varepsilon }\in W_{0}^{1,{\overrightarrow{p}}}(\Omega )$ be
solutions of $(E_{\varepsilon },f)$ and $(E_{\varepsilon },\widetilde{f})$,
respectively, then it is an immediate consequence of Propsition \ref{5.2} is
that $u_{\varepsilon }\leq \widetilde{u}_{\varepsilon }$ almost everywhere
in $\Omega $. Furthermore, from the monotonocity of $\beta _{\varepsilon
}\circ T_{1/\varepsilon }$ it follows that also \newline
$$\beta _{\varepsilon }(T_{1/\varepsilon }(u_{\varepsilon }))\leq \beta
_{\varepsilon }(T_{1/\varepsilon }(\widetilde{u}_{\varepsilon }))$$ a,e. in $%
\Omega $.
\end{remark}

\subsection{A priori estimates}

\begin{lemma}\label{5.4}
For $0<\varepsilon \leq 1$ and $f\in L^{\infty }(\Omega )$ let $%
u_{\varepsilon }\in W_{0}^{1,{\overrightarrow{p}}}(\Omega ))$ be a solution
of $(E_{\varepsilon },f)$. Then \newline
$i)$ There exists a constant $C_{1}=C_{1}\displaystyle(||f||_{\infty
},\lambda ,p_i,N)>0,$ not depending on $\varepsilon $, such that
\begin{equation}\label{eq5.3}
|||u_{\varepsilon }|||\leq C_{1}.
\end{equation}
$ii)$ for all $0<\varepsilon \leq 1,$ we have
\begin{equation}\label{eq5.4}
||\beta _{\varepsilon }(T_{1/\varepsilon }(u_{\varepsilon }))||_{\infty
}\leq ||f||_{\infty }
\end{equation}
$iii)$ for all $0<\varepsilon \leq 1$ and all $l,k>0,$ we have
\begin{equation}\label{eq5.5}
\int_{\{l\leq |u|\leq k+l\}}a(x,Du_{\varepsilon }).Du_{\varepsilon }\leq
k\int_{\{|u_{\varepsilon }|>l\}}|f|.
\end{equation}
\end{lemma}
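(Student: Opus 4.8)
The plan is to test the weak formulation \eqref{eq5.1} with suitable truncation-type functions of $u_\varepsilon$ and exploit the coercivity hypothesis $(\mathbf{H}_1)$ together with the sign properties of $F$-terms after integration by parts. For part $ii)$, I would use $\varphi = H_\sigma^+(u_\varepsilon - M)$ with $M = \|f\|_\infty / (\text{suitable factor})$, or more directly compare $u_\varepsilon$ with the constant: by Remark \ref{5.3} type arguments, applied to the constant data $\widetilde f \equiv \|f\|_\infty$ whose solution is a constant $c$ with $\beta_\varepsilon(T_{1/\varepsilon}(c)) + \varepsilon \arctan(c) = \|f\|_\infty$, one gets $\beta_\varepsilon(T_{1/\varepsilon}(u_\varepsilon)) \leq \|f\|_\infty$; the lower bound $\beta_\varepsilon(T_{1/\varepsilon}(u_\varepsilon)) \geq -\|f\|_\infty$ follows symmetrically using $\widetilde f \equiv -\|f\|_\infty$. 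Alternatively, test directly with $\varphi = \tfrac{1}{\sigma}T_\sigma((\beta_\varepsilon(T_{1/\varepsilon}(u_\varepsilon)) - \|f\|_\infty)^+)$-style functions and let $\sigma \to 0$; the divergence term has the right sign by monotonicity $(\mathbf{H}_3)$ and the chain rule, the $\arctan$ term has the right sign, and what survives is $\int (\beta_\varepsilon(T_{1/\varepsilon}(u_\varepsilon)) - \|f\|_\infty)^+ \le 0$.

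For part $iii)$, the plan is to test \eqref{eq5.1} with $\varphi = T_k(u_\varepsilon - T_l(u_\varepsilon)) = T_k((|u_\varepsilon|-l)^+ \operatorname{sign}(u_\varepsilon))$, which belongs to $W_0^{1,\overrightarrow p}(\Omega)$ and is bounded, hence admissible. On the set $\{l \le |u_\varepsilon| \le k+l\}$ this test function equals $u_\varepsilon - T_l(u_\varepsilon)$ and $D\varphi = Du_\varepsilon$; elsewhere $D\varphi = 0$. The principal term then becomes $\int_{\{l \le |u_\varepsilon| \le k+l\}} a(x,Du_\varepsilon)\cdot Du_\varepsilon$, which we want to bound. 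The $\beta_\varepsilon$ term: since $\beta_\varepsilon(T_{1/\varepsilon}(u_\varepsilon))$ has the same sign as $u_\varepsilon$ (because $0 \in \beta(0)$ and Yosida approximations preserve sign), $\int \beta_\varepsilon(T_{1/\varepsilon}(u_\varepsilon)) \varphi \ge 0$ and may be dropped. The $\varepsilon \arctan(u_\varepsilon) \varphi$ term is likewise nonnegative and dropped. The convection term $\int F(T_{1/\varepsilon}(u_\varepsilon)) \cdot D\varphi$ must be shown to vanish (or be nonpositive): writing it as $\int \operatorname{div}(\Phi(T_{1/\varepsilon}(u_\varepsilon))) $-type quantity via a primitive of $F \circ (\text{truncation})$ composed appropriately — since $\varphi$ is itself a function of $u_\varepsilon$, the integrand $F(T_{1/\varepsilon}(u_\varepsilon)) \cdot Du_\varepsilon \,\mathbf{1}_{\{l\le|u_\varepsilon|\le k+l\}}$ integrates to a pure boundary/telescoping term which is zero because the relevant primitive, evaluated at level sets, telescopes to $0$ by the divergence theorem with zero boundary data. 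Finally the right-hand side is bounded: $\int f \varphi \le \|\varphi\|_\infty \int_{\{|u_\varepsilon|>l\}} |f| = k \int_{\{|u_\varepsilon|>l\}}|f|$, since $|\varphi| \le k$ and $\varphi = 0$ on $\{|u_\varepsilon| \le l\}$. Combining gives \eqref{eq5.5}.

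For part $i)$, the plan is to take $l = 0$, $k \to \infty$ formally, or more cleanly to test \eqref{eq5.1} with $\varphi = T_k(u_\varepsilon)$ for fixed $k$: coercivity $(\mathbf{H}_1)$ gives $\lambda \sum_i \int |\partial_{x_i} T_k(u_\varepsilon)|^{p_i} \le \int \beta_\varepsilon(\cdots) T_k(u_\varepsilon) + \varepsilon\int \arctan(u_\varepsilon) T_k(u_\varepsilon) + (\text{convection}) + \int f T_k(u_\varepsilon)$, where the $\beta_\varepsilon$ and $\arctan$ terms are nonnegative but on the right — so instead I would move them or, better, use the $L^\infty$ bound from $ii)$: $\int \beta_\varepsilon(T_{1/\varepsilon}(u_\varepsilon)) T_k(u_\varepsilon) \le k\|f\|_\infty |\Omega|$, the $\arctan$ term is $\le k \cdot \tfrac\pi2 |\Omega|$, the convection term vanishes as above, and $\int f T_k(u_\varepsilon) \le k \|f\|_\infty |\Omega|$. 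This yields $\sum_i \int |\partial_{x_i} T_k(u_\varepsilon)|^{p_i} \le C k$ with $C$ independent of $\varepsilon$, which alone is not quite $|||u_\varepsilon||| \le C_1$; the genuine estimate needs the quantity $|||\cdot|||$ defined elsewhere in the paper (presumably $\sup_k \tfrac1k \sum_i \int_{\{|u|<k\}}|\partial_{x_i}u|^{p_i}$ or a Marcinkiewicz-type norm), for which the bound $\le Ck$ at truncation level $k$ is exactly what is required, together with Poincaré \eqref{eq2.1} to control lower-order norms.

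\textbf{Main obstacle.} The delicate point is handling the convection term $\int_\Omega F(T_{1/\varepsilon}(u_\varepsilon)) \cdot D\varphi$ for the various test functions and showing it contributes nothing (or the right sign). Since $F$ is only locally Lipschitz and $\varphi$ is a nonlinear function of $u_\varepsilon$, one must carefully identify the primitive: for a test function $\varphi = G(u_\varepsilon)$ with $G$ Lipschitz, $F(T_{1/\varepsilon}(u_\varepsilon)) \cdot D\varphi = F(T_{1/\varepsilon}(u_\varepsilon)) G'(u_\varepsilon) \cdot Du_\varepsilon = DH(u_\varepsilon)$ where $H' (s)= F(T_{1/\varepsilon}(s)) G'(s)$ componentwise — but $F$ is vector-valued while $\varphi$ is scalar, so this is not a perfect divergence. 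The correct argument (as alluded to in the $L^\infty$-approximation subsection via "the boundary condition on the convection term" and the reference to \cite{ref:4}) is that $\int_\Omega F(T_{1/\varepsilon}(u_\varepsilon)) \cdot D(G(u_\varepsilon)) = \int_\Omega \operatorname{div}\big(\widetilde\Phi(u_\varepsilon)\big) = 0$ for an appropriate primitive $\widetilde\Phi$ with $\widetilde\Phi_i{}'(s) = F_i(T_{1/\varepsilon}(s))G'(s)$, using that $G(u_\varepsilon) \in W_0^{1,\overrightarrow p}$ has zero trace and the Gauss–Green theorem for Sobolev functions. Making this rigorous — justifying the chain rule for the nonsmooth composition and the vanishing of the boundary integral — is the technical heart of the lemma; the coercivity and right-hand-side estimates are then routine.
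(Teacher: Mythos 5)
Your proposal follows essentially the same route as the paper: part $iii)$ uses the identical test function $T_k(u_\varepsilon - T_l(u_\varepsilon))$ with the same sign and Gauss--Green arguments, part $ii)$ via your truncation-quotient test function of $\beta_\varepsilon(T_{1/\varepsilon}(u_\varepsilon))$ is the paper's argument (it tests with $\tfrac{1}{\sigma}[T_{k+\sigma}(\beta_\varepsilon(T_{1/\varepsilon}(u_\varepsilon)))-T_k(\beta_\varepsilon(T_{1/\varepsilon}(u_\varepsilon)))]$ and lets $\sigma\to 0$ with $k>\|f\|_\infty$), and part $i)$ rests on the same coercivity-plus-vanishing-convection estimate (the paper simply tests with $u_\varepsilon$ itself and applies H\"older/Poincar\'e, which is cleaner than your truncated variant). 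One caveat: your first suggested route for $ii)$, comparing with the constant data $\widetilde f\equiv\|f\|_\infty$ whose ``solution'' is a constant $c$, does not work as stated, since a nonzero constant does not belong to $W_0^{1,\overrightarrow{p}}(\Omega)$ and hence is not a solution of $(E_\varepsilon,\widetilde f)$ to which Proposition \ref{5.2} applies; your alternative direct test is the correct (and the paper's) argument.
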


\begin{proof}
$i)$ Taking $u_{\varepsilon }$ as a test function in $(\ref{eq5.1})$ we obtain
\begin{equation*}
\int_{\Omega }(\beta _{\varepsilon }(T_{1/\varepsilon }(u_{\varepsilon
}))+\varepsilon \arctan (u_{\varepsilon }))u_{\varepsilon } dx +\int_{\Omega
}a(x,Du_{\varepsilon }).Du_{\varepsilon } dx +\int_{\Omega }F(T_{1/\varepsilon }(u_{\varepsilon
})).Du_{\varepsilon } dx =\int_{\Omega }fu_{\varepsilon }dx
\end{equation*}
As the first term on the left-hand side is nonnegative and the integral over
the convextion term vanishes by $(H_{1})$, we have
\begin{equation*}
\displaystyle\lambda \sum_{i=1}^{N}\int_{\Omega }|\dfrac{\partial
u_{\varepsilon }}{\partial x_{i}}|^{p_{i}}dx\leq \sum_{i=1}^{N}\int_{\Omega
}a_{i}(x,Du_{\varepsilon }).\dfrac{\partial u_{\varepsilon }}{\partial x_{i}}%
dx\leq \int_{\Omega }fu_{\varepsilon }dx\leq C||f||_{\infty
}(\sum_{i=1}^{N}\int_{\Omega }|\dfrac{\partial u_{\varepsilon }}{\partial
x_{i}}|^{p_{i}}dx)^{1/p_{i}}.
\end{equation*}
due to the H\"{o}lder inequality. Thus $|||u_{\varepsilon }|||^{p_{i}}\leq
C_{2}|||u_{\varepsilon }|||$, where $C_{2}$ is a positive constant. Then we
can deduce that $u_{\varepsilon }$ remains bounded in $W_{0}^{1,{%
\overrightarrow{p}}}(\Omega )$ i.e.,
\begin{equation*}
|||u_{\varepsilon }|||\leq C_{1}.
\end{equation*}
$ii)$ Taking $\dfrac{1}{\sigma }[T_{k+\sigma }(\beta _{\varepsilon
}(T_{1/\varepsilon }(u_{\varepsilon })))-T_{k}(\beta _{\varepsilon
}(T_{1/\varepsilon }(u_{\varepsilon })))]$ as a test function in $(\ref{eq5.1})$,
passing to the limit with $\sigma \rightarrow 0$ and choosing $%
k>||f||_{\infty }$, we obtain $ii)$.\newline
$iii)$ For $k,l>0$ fixed we take $T_{k}(u_{\varepsilon
}-T_{l}(u_{\varepsilon }))$ as a test function in $(\ref{eq5.1})$.\newline
Using $\int_{\Omega }a(x,Du_{\varepsilon }).DT_{k}(u_{\varepsilon
}-T(u_{\varepsilon }))dx=\int_{\{l<|u_{\varepsilon
}|<l+k\}}a(x,Du_{\varepsilon }).Du_{\varepsilon }dx$, and as the first term
on the left-hand side is nonnegative and the convection term vanishes, we
get
\begin{equation}\label{eq5.6}
\int_{\{l<|u_{\varepsilon }|<k+l\}}a(x,Du_{\varepsilon }).Du_{\varepsilon
}\leq \int_{\Omega }fT_{k}(u_{\varepsilon }-T_{l}(u_{\varepsilon }))dx\leq
\int_{\{|u_{\varepsilon }|>l\}}|f|dx.
\end{equation}
\end{proof}

\begin{remark}\label{5.5}
For $k>0$, from $iii)$ in Lemma \ref{5.4}, we deduce that
\begin{equation}\label{eq5.7}
\displaystyle|\{|u_{\varepsilon }|\geq l\}|\leq \dfrac{C_{2}}{l^{1-\frac{1}{%
\overline{p}}}}
\end{equation}
\begin{equation}\label{eq5.8}
\int_{\{l\leq |u_{\varepsilon }|\leq k+l\}}a(x,Du_{\varepsilon
}).Du_{\varepsilon }\leq k||f||_{\infty }|\{|u_{\varepsilon }|>l\}|\leq
\dfrac{C_{2}(k)}{l^{\frac{1}{\overline{p}}-1}}
\end{equation}
for any $0<\varepsilon \leq 1$ and a constant $C_{2}(k)>0$ not depending on $%
\varepsilon $.
\end{remark}
Indeed, let $l>0$ large enough we have:\\
\begin{equation*}
l \lvert\{|u_\varepsilon|\geq l\}\rvert= \int_{\{|u_\varepsilon|\geq l\}}\lvert T_l(u_\varepsilon)\rvert dx \leq C \Bigg(\sum_{i=1}^{N}\int_{\Omega}\Big\lvert\dfrac{\partial T_l(u_\varepsilon)}{\partial x_i} \Big\rvert^{p_i}\Bigg)^{1/p_i} \leq C_2l^{1/\overline{p}}
\end{equation*}
with implies $ \lvert\{|u_\varepsilon|\geq l\}\rvert \leq  C_2l^{1/\overline{p}-1}$. Then 
$$\lim_{l\rightarrow +\infty} \lvert\{|u_\varepsilon|\geq l\}\rvert=0.$$
Therefore, (\ref{eq5.7}) follows from (\ref{eq5.8}).

\subsection{Basic convergence results}

\begin{lemma}\label{5.6}
\label{5.6}For $0<\varepsilon \leq 1$ and $f\in L^{\infty }(\Omega )$, let $%
u_{\varepsilon }\in W_{0}^{1,{\overrightarrow{p}}}(\Omega )$ be a solution
of $(E_{\varepsilon },f)$. There exist $u\in W_{0}^{1,{\overrightarrow{p}}%
}(\Omega ),b\in L^{\infty }(\Omega )$ such that for a not relabeled
subsequence of $(u_{\varepsilon })_{0<\varepsilon \leq 1}$ as $\varepsilon
\mapsto 0:$

\begin{equation}\label{eq5.9}
u_{\varepsilon }\rightharpoonup u\hspace*{1cm}in\hspace*{2mm}W_{0}^{1,{%
\overrightarrow{p}}}(\Omega
)\hspace*{2mm}and\hspace*{2mm}a.e.\hspace*{2mm}in\hspace*{2mm}\Omega ,
\end{equation}

\begin{equation}\label{eq5.10}
T_{k}(u_{\varepsilon })\rightharpoonup
T_{k}(u)\hspace*{0.5cm}in\hspace*{2mm}W_{0}^{1,{\overrightarrow{p}}}(\Omega
)\hspace*{2mm}and\hspace*{2mm}strongly\hspace*{2mm}in\hspace*{2mm}L^{q}(%
\Omega ),
\end{equation}

\begin{equation}\label{eq5.11}
\beta _{\varepsilon }(T_{1/\varepsilon }(u_{\varepsilon }))\rightharpoonup b%
\hspace{0.5cm}in\hspace*{2mm}L^{\infty }(\Omega ).
\end{equation}
Moreover, for any $k>0$,
\begin{equation}\label{eq5.12}
DT_{k}(u_{\varepsilon })\rightharpoonup
DT_{k}(u)\hspace*{0.5cm}in\hspace*{2mm}\prod_{i=1}^{N}L^{p_{i}}(\Omega ),
\end{equation}

\begin{equation}\label{eq5.13}
a(x,DT_{k}(u_{\varepsilon }))\rightharpoonup
a(x,DT_{k}(u))\hspace*{0.5cm}in\hspace*{2mm}\prod_{i=1}^{N}L^{p_{i}^{^{%
\prime }}}(\Omega ).
\end{equation}
\end{lemma}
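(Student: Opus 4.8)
The plan is to derive all five convergences in Lemma~\ref{5.6} from the $\varepsilon$-uniform bounds of Lemma~\ref{5.4}, combined with the compactness of the anisotropic embedding (Proposition~\ref{2.1}) and the growth hypothesis $(\mathbf{H}_{2})$.

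First, part $i)$ of Lemma~\ref{5.4} gives $|||u_{\varepsilon}|||\le C_{1}$ with $C_{1}$ independent of $\varepsilon$, so $(u_{\varepsilon})_{0<\varepsilon\le 1}$ is bounded in the reflexive space $W_{0}^{1,{\overrightarrow{p}}}(\Omega)$; extract a subsequence, not relabeled, with $u_{\varepsilon}\rightharpoonup u$ in $W_{0}^{1,{\overrightarrow{p}}}(\Omega)$ for some $u$. By Proposition~\ref{2.1} the embedding $W_{0}^{1,{\overrightarrow{p}}}(\Omega)\hookrightarrow\hookrightarrow L^{q}(\Omega)$ is compact for $q<p_{\infty}$, so along a further subsequence $u_{\varepsilon}\to u$ strongly in $L^{q}(\Omega)$ and a.e.\ in $\Omega$, which is (\ref{eq5.9}). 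For (\ref{eq5.11}) only part $ii)$ of Lemma~\ref{5.4} is needed: $\|\beta_{\varepsilon}(T_{1/\varepsilon}(u_{\varepsilon}))\|_{\infty}\le\|f\|_{\infty}$ uniformly, so a subsequence converges weak-$*$ in $L^{\infty}(\Omega)$ to some $b$ with $\|b\|_{\infty}\le\|f\|_{\infty}$.

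Now fix $k>0$. Since $T_{k}$ is $1$-Lipschitz with $T_{k}(0)=0$ and $\partial_{x_{i}}T_{k}(u_{\varepsilon})=\partial_{x_{i}}u_{\varepsilon}\,\chi_{\{|u_{\varepsilon}|<k\}}$, the truncations $T_{k}(u_{\varepsilon})$ are bounded in $W_{0}^{1,{\overrightarrow{p}}}(\Omega)$ uniformly in $\varepsilon$, and in $L^{\infty}(\Omega)$ by $k$; extract $T_{k}(u_{\varepsilon})\rightharpoonup v_{k}$ in $W_{0}^{1,{\overrightarrow{p}}}(\Omega)$ and, by Proposition~\ref{2.1}, $T_{k}(u_{\varepsilon})\to v_{k}$ strongly in $L^{q}(\Omega)$. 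On the other hand $u_{\varepsilon}\to u$ a.e.\ and $T_{k}$ is continuous, so $T_{k}(u_{\varepsilon})\to T_{k}(u)$ a.e.; uniqueness of limits forces $v_{k}=T_{k}(u)$, proving (\ref{eq5.10}), and, because the gradient is a bounded linear (hence weakly continuous) operator from $W_{0}^{1,{\overrightarrow{p}}}(\Omega)$ to $\prod_{i=1}^{N}L^{p_{i}}(\Omega)$, also (\ref{eq5.12}). Finally $(\mathbf{H}_{2})$ yields $|a_{i}(x,DT_{k}(u_{\varepsilon}))|\le\gamma(d_{i}(x)+|\partial_{x_{i}}T_{k}(u_{\varepsilon})|^{p_{i}-1})$, and since $\partial_{x_{i}}T_{k}(u_{\varepsilon})$ is bounded in $L^{p_{i}}(\Omega)$ the right-hand side is bounded in $L^{p_{i}'}(\Omega)$; hence $a(x,DT_{k}(u_{\varepsilon}))$ is bounded in $\prod_{i=1}^{N}L^{p_{i}'}(\Omega)$ and, along a subsequence, converges weakly to some $\varphi_{k}$. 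A diagonal procedure over $k\in\mathbb{N}$ makes the subsequence independent of $k$.

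The main obstacle is the identification $\varphi_{k}=a(x,DT_{k}(u))$ needed for (\ref{eq5.13}): this cannot come from weak compactness alone and requires first upgrading (\ref{eq5.12}) to the strong convergence $DT_{k}(u_{\varepsilon})\to DT_{k}(u)$ in $\prod_{i=1}^{N}L^{p_{i}}(\Omega)$. The route I would take is the Leray--Lions/Minty device: test $(E_{\varepsilon},f)$ with a suitable admissible function built from $T_{k}(u_{\varepsilon})-T_{k}(u)$ (regularized, e.g.\ composed with $h_{l}$), estimate the $\beta_{\varepsilon}$, $\varepsilon\arctan$ and convection contributions via parts $i)$--$iii)$ of Lemma~\ref{5.4}, estimates (\ref{eq5.7})--(\ref{eq5.8}) and the strong $L^{q}$ convergence already obtained, and invoke the monotonicity $(\mathbf{H}_{3})$ to conclude $\int_{\Omega}(a(x,DT_{k}(u_{\varepsilon}))-a(x,DT_{k}(u)))\cdot(DT_{k}(u_{\varepsilon})-DT_{k}(u))\to 0$; a standard monotonicity argument then gives $DT_{k}(u_{\varepsilon})\to DT_{k}(u)$ a.e.\ and in $\prod_{i=1}^{N}L^{p_{i}}(\Omega)$. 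With the a.e.\ convergence of the gradients in hand, the Carath\'{e}odory property of $a$ gives $a(x,DT_{k}(u_{\varepsilon}))\to a(x,DT_{k}(u))$ a.e., and the uniform $\prod L^{p_{i}'}$ bound above, via Vitali's theorem, upgrades this to the weak convergence (\ref{eq5.13}), so $\varphi_{k}=a(x,DT_{k}(u))$. (Alternatively, (\ref{eq5.13}) may be recorded provisionally with $\varphi_{k}$ in place of $a(x,DT_{k}(u))$ and the identification deferred to the step where the renormalized formulation is passed to the limit.)
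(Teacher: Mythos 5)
Your compactness steps reproduce the paper's argument: the uniform bound $(\ref{eq5.3})$ gives $(\ref{eq5.9})$, $(\ref{eq5.10})$ and $(\ref{eq5.12})$ via reflexivity and Proposition \ref{2.1}, the bound $(\ref{eq5.4})$ gives $(\ref{eq5.11})$ by weak-$*$ compactness, and $(\mathbf{H}_{2})$ gives a weak limit $\varphi_{k}$ of $a(x,DT_{k}(u_{\varepsilon}))$ in $\prod_{i=1}^{N}L^{p_{i}'}(\Omega)$. Your route to the inequality $\limsup_{\varepsilon\to 0}\int_{\Omega}a(x,DT_{k}(u_{\varepsilon}))\cdot(DT_{k}(u_{\varepsilon})-DT_{k}(u))\,dx\le 0$, by testing with $h_{l}(u_{\varepsilon})(T_{k}(u_{\varepsilon})-T_{k}(u))$ and controlling the $\beta_{\varepsilon}$, $\varepsilon\arctan$ and convection terms, is also exactly the paper's Steps 1--2 leading to $(\ref{eq5.16})$, and the consequence $\int_{\Omega}(a(x,DT_{k}(u_{\varepsilon}))-a(x,DT_{k}(u)))\cdot(DT_{k}(u_{\varepsilon})-DT_{k}(u))\to 0$ is the paper's $(\ref{eq5.18})$.

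The gap is in your final identification step. From the vanishing of that monotonicity integral you claim ``a standard monotonicity argument'' yields $DT_{k}(u_{\varepsilon})\to DT_{k}(u)$ a.e.\ and strongly in $\prod_{i=1}^{N}L^{p_{i}}(\Omega)$, and then you identify $\varphi_{k}$ through the Carath\'eodory continuity of $a$. That deduction requires \emph{strict} monotonicity of $\xi\mapsto a(x,\xi)$: if $(a(x,\xi)-a(x,\eta))\cdot(\xi-\eta)$ can vanish for $\xi\neq\eta$, the integrand tending to zero tells you nothing pointwise about the gradients. Hypothesis $(\mathbf{H}_{3})$ only asserts $\ge 0$, and the paper's own Example in the last section exhibits an admissible $a$ for which the inequality is explicitly \emph{not} strict, so this is not a removable technicality. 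The paper avoids the issue by identifying $\Phi_{k}$ with Minty's device (its Step 3): for $\phi\in D(\Omega)$ and $\alpha\in\mathbb{R}$ one writes the monotonicity inequality between $DT_{k}(u_{\varepsilon})$ and $D(T_{k}(u)-\alpha\phi)$, passes to the limit using $(\ref{eq5.16})$ and the weak convergences, divides by $\alpha$ of either sign and lets $\alpha\to 0$ to get $(\ref{eq5.17})$, hence $\Phi_{k}=a(x,DT_{k}(u))$ in $D'(\Omega)$ and a.e. This uses only monotonicity plus the Carath\'eodory and growth properties. You should replace your a.e.-gradient-convergence step by this Minty argument; your parenthetical fallback of leaving the limit as $\varphi_{k}$ merely postpones, and does not supply, the identification the lemma asserts.
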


\begin{proof}
By combining Lemma \ref{5.4} and Remark \ref{5.5}, we obtain $(\ref{eq5.11})$. From $(\ref{eq5.7}), (\ref{eq5.3})$
and $(\ref{eq2.5})$, we deduce with a classical argument (see \cite{ref:14}) that
for a subsequence still indexed by $\varepsilon $, $(\ref{eq5.9})-(\ref{eq5.10})$ and $(\ref{eq5.12})$
hold as $\varepsilon $ tends to $0$, where $u$ is a mesurable function
defined on $\Omega $.

It is left to prove $(\ref{eq5.13})$. For this, by $(H_{2})$ and $(\ref{eq5.3})$ it follows
that given any subsequence of $(a(x,DT_{k}(u_{\varepsilon }))_{\varepsilon })
$, there exists a subsequence, still denoted by $(a(x,DT_{k}(u_{\varepsilon
}))_{\varepsilon })$, such that $a(x,DT_{k}(u_{\varepsilon
}))\rightharpoonup \Phi _{k}$ in $\displaystyle\prod_{i=1}^{N}L^{p_{i}^{^{%
\prime }}}(\Omega )$. We will prove that $\Phi _{k}=a(x,DT_{k}(u))$ a.e. on $%
\Omega $. The proof consists of three steps.\newline
\underline{Step 1:} For every $h\in W^{1,\infty }(\mathbb{R}),h\leq 0$ and $%
supp(h)$ compact, we will prove that
\begin{equation}\label{eq5.14}
\limsup_{\varepsilon \rightarrow 0}\int_{\Omega }a(x,DT_{k}(u_{\varepsilon
})).D[h(u_{\varepsilon })(T_{k}(u_{\varepsilon })-T_{k}(u))]dx\leq 0.
\end{equation}
Taking $h(u_{\varepsilon })(T_{k}(u_{\varepsilon })-T_{k}(u))$ as a test
function in $(\ref{eq5.1})$, we have
\begin{equation}\label{eq5.15}
\begin{array}{c}
\int_{\Omega }(\beta _{\varepsilon }(T_{1/\varepsilon }(u_{\varepsilon
})+\varepsilon \arctan (u_{\epsilon }))h(u_{\varepsilon
})(T_{k}(u_{\varepsilon })-T_{k}(u))+\int_{\Omega }a(x,D(u_{\varepsilon
})).D[h_{\varepsilon }(T_{k}(u_{\varepsilon })-T_{k}(u))] \\
+\int_{\Omega }F(T_{1/\varepsilon }(u_{\varepsilon })).D[h_{\varepsilon
}(T_{k}(u_{\varepsilon })-T_{k}(u))]=\int_{\Omega }fh(u_{\varepsilon
})(T_{k}(u_{\varepsilon })-T_{k}(u)).
\end{array}
\end{equation}

Using $|h_{\varepsilon }(T_{k}(u_{\varepsilon })-T_{k}(u))|\leq
2k||h||_{\infty }$, by Lebesgue's dominated convergence theorem we find that
$\lim\limits_{\varepsilon \rightarrow 0}\int_{\Omega }fh(u_{\varepsilon
})(T_{k}(u_{\varepsilon })-T_{k}(u))=0$ and then \newline
$\lim\limits_{\varepsilon \rightarrow 0}\int_{\Omega }F(T_{1/\varepsilon
}(u_{\varepsilon })).D[h_{\varepsilon }(u_{\varepsilon
})(T_{k}(u_{\varepsilon })-T_{k}(u))]=0$. By using the same arguments as in \cite{ref:3}, we can prove that
\begin{equation*}
\limsup_{\varepsilon \rightarrow 0}\int_{\Omega }\beta _{\varepsilon
}(T_{1/\varepsilon }(u_{\varepsilon })).[h(u_{\varepsilon
})(T_{k}(u_{\varepsilon })-T_{k}(u))]dx\geq 0.
\end{equation*}
Passing to the limit in $(\ref{eq5.15})$ and using the above results, we obtain $%
(\ref{eq5.14})$.

\underline{Step 2:} We now prove that for every $k>0$,
\begin{equation}\label{eq5.16}
\limsup_{\varepsilon \rightarrow 0}\int_{\Omega }a(x,DT_{k}(u_{\varepsilon
})).[D(T_{k}(u_{\varepsilon })-DT_{k}(u))]dx\leq 0.
\end{equation}
Indeed, for $k>l$, take $h_{l}(u_{\varepsilon })(T_{k}(u_{\varepsilon
})-T_{k}(u))$ as a test function in $(\ref{eq5.1})$. Letting $\varepsilon
\rightarrow 0$ and then $l\rightarrow \infty $, we obtain
\begin{equation*}
\limsup_{\varepsilon \rightarrow 0}\int_{\Omega }a(x,DT_{k}(u_{\varepsilon
})).D[h_{l}(u_{\varepsilon })(T_{k}(u_{\varepsilon
})-T_{k}(u))]dx=E_{1}+E_{2}+E_{3}.
\end{equation*}
where
\begin{equation*}
\begin{array}{l}
E_{1}=\int_{\{|u_{\varepsilon }|\leq k\}}h_{l}(u_{\varepsilon
})a(x,DT_{k}(u_{\varepsilon })).[DT_{k}(u_{\varepsilon })-DT_{k}(u)]dx, \\
\\
E_{2}=\int_{\{|u_{\varepsilon }|>k\}}h_{l}(u_{\varepsilon
})a(x,DT_{k}(u_{\varepsilon })).(-DT_{k}(u))]dx, \\
\\
E_{3}=\int_{\Omega }h_{l}^{^{\prime }}(u_{\varepsilon
})(T_{k}(u_{\varepsilon })-T_{k}(u))a(x,DT_{k}(u_{\varepsilon
})).Du_{\varepsilon }dx.
\end{array}
\end{equation*}
Since $l>k$, on the set $\{|u_{\varepsilon }|\leq k\}$ we have $%
h_{l}(u_{\varepsilon })=1$ so that we can write
\begin{equation*}
\limsup_{\varepsilon \rightarrow 0}E_{1}=\limsup_{\varepsilon \rightarrow
0}\int_{\Omega }a(x,DT_{k}(u_{\varepsilon })).(DT_{k}(u_{\varepsilon
})-DT_{k}(u))dx.
\end{equation*}
For $E_{2}$, using Lebesgue's dominated convergence theorem, we get
\begin{equation*}
\lim_{\varepsilon \rightarrow 0}E_{2}=\int_{\{|u_{\varepsilon
}|>k\}}h_{l}(u)\Phi _{l+1}.DT_{k}(u)dx=0.
\end{equation*}
For $E_{3}$, we have
\begin{equation*}
-\int_{\Omega }h_{l}^{^{\prime }}(u_{\varepsilon })(T_{k}(u_{\varepsilon
})-T_{k}(u))a(x,DT_{k}(u_{\varepsilon }))Du_{\varepsilon }dx
\end{equation*}
\hspace*{4cm}
\begin{equation*}
\leq 2k\int_{\{l<|u_{\varepsilon }|\leq l+1\}}a(x,Du_{\varepsilon
})Du_{\varepsilon }dx.
\end{equation*}

Using $(\ref{eq5.8})$, we deduce that
\begin{equation*}
\limsup_{l\rightarrow \infty }\limsup_{\varepsilon \rightarrow
0}(-\int_{\Omega }h_{l}^{^{\prime }}(u_{\varepsilon })(T_{k}(u_{\varepsilon
})-T_{k}(u))a(x,DT_{k}(u_{\varepsilon })).Du_{\varepsilon }dx)\leq 0.
\end{equation*}
Applying $(\ref{eq5.14})$ with $h$ replaced by $h_{l},l>k$, we get
\begin{equation*}
\limsup_{\varepsilon \rightarrow 0}\int_{\Omega }a(x,DT_{k}(u_{\varepsilon
})).[DT_{k}(u_{\varepsilon })-DT_{k}(u)]dx
\end{equation*}
\begin{equation*}
\hspace*{4cm}\leq \limsup_{\varepsilon \rightarrow 0}(-\int_{\Omega
}h_{l}^{^{\prime }}(u_{\varepsilon })(T_{k}(u_{\varepsilon
})-T_{k}(u))a(x,DT_{k}(u_{\varepsilon })).Du_{\varepsilon }dx).
\end{equation*}

Now letting $l\rightarrow \infty ,$ $(\ref{eq5.16})$ yields.

\underline{Step 3:} In this step, we prove by monotonicity arguments that
for $k>0$, \newline
$\Phi _{k}=a(x,DT_{k}(u))$ for almost every $x\in \Omega $. Let $\phi \in
D(\Omega )$ and $\alpha \in \mathbb{R}$. Using $(\ref{eq5.16})$, we have
\begin{equation*}
\alpha \lim_{\varepsilon \rightarrow 0}\int_{\Omega
}a(x,DT_{k}(u_{\varepsilon })).D\phi dx\geq \alpha \int_{\Omega
}a(x,D(T_{k}(u)-\alpha \phi )).D\phi dx.
\end{equation*}
Dividing by $\alpha >0$ and $\alpha <0$ and letting $\alpha \rightarrow 0$,
we obtain
\begin{equation}\label{eq5.17}
\lim_{\varepsilon \rightarrow 0}\int_{\Omega }a(x,DT_{k}(u_{\varepsilon
})).D\phi dx=\int_{\Omega }a(x,DT_{k}(u)).D\phi dx.
\end{equation}
This means that for all $k>0$, $\displaystyle\int_{\Omega }\Phi _{k}.D\phi
dx=\int_{\Omega }a(x,DT_{k}(u)),$ and then \newline
$\Phi _{k}=a(x,DT_{k}(u))$ in $D^{\prime }(\Omega )$ for all $k>0$. Hence $%
\Phi _{k}=a(x,DT_{k}(u))$ a.e. in $\Omega $ and then $a(x,DT_{k}(u_{%
\varepsilon }))\rightharpoonup a(x,DT_{k}(u))$ weakly in $\displaystyle%
\prod_{i=1}^{N}L^{p_{i}^{^{\prime }}}(\Omega ).$
\end{proof}

\begin{remark}\label{5.7}
As an immediate consequence of $(\ref{eq5.16})$ and $(H_{3})$ we obtain
\begin{equation}\label{eq5.18}
\lim_{\varepsilon \rightarrow 0}\int_{\Omega }a(x,DT_{k}(u_{\varepsilon
})-a(x,DT_{k}(u)).(DT_{k}(u_{\varepsilon}) -T_{k}(u))=0.
\end{equation}
Let us see finally that
\begin{equation}\label{eq5.19}
\lim_{l\rightarrow \infty }\int_{l<|u|<l+1}a(x,DT(u)dx=0.
\end{equation}
\end{remark}

Endeed, for any $l\geq 0$ fixed we have\newline
\begin{equation*}
\int_{l<|u|<l+1}a(x,D(u_{\varepsilon }).D(u_{\varepsilon })dx=\int_{\Omega
}a(x,DT_{l+1}(u_{\varepsilon }).(DT_{l+1}(u_{\varepsilon
})-DT_{l}(u_{\varepsilon }))dx
\end{equation*}
\begin{equation*}
=\int_{\Omega }a(x,DT_{l+1}(u_{\varepsilon })).DT_{l+1}(u_{\varepsilon
})dx-\int_{\Omega }a(x,DT_{l}(u_{\varepsilon })).DT_{l}(u_{\varepsilon })dx.
\end{equation*}
By $(\ref{eq5.18})$ and passing to the limit as $\varepsilon \rightarrow 0$ for
fixed $l\geq 0$ we obtain
\begin{equation}\label{eq5.20}
\begin{array}{cc}
\lim_{\varepsilon \rightarrow 0}\int_{\{l<|u_\varepsilon|<l+1\}}a(x,D(u_{\varepsilon
})).D(u_{\varepsilon })dx  =\int_{\Omega
}a(x,DT_{l+1}(u)).DT_{l+1}(u)dx &\\  - \int_{\Omega
}a(x,DT_{l}(u)).DT_{l}(u)dx 
 =\int_{\{l<|u|<l+1\}}a(x,Du).D(u)dx.
\end{array}
\end{equation}
Therefore, taking $l\rightarrow +\infty $ in $(\ref{eq5.20})$ and using the estimate $(\ref{eq5.8})$ show that satisfies $(R_3)$.

\subsection{Proof of the existence result}

We are now in position to conclude the proof of our main result presented in
Theorem \ref{5.1}:

\begin{proof}
Let $h\in C_{c}^{1}(\mathbb{R})$ and $\varphi \in W_{0}^{1,{\overrightarrow{p%
}}}(\Omega )\cap L^{\infty }(\Omega )$. Taking $h_{l}(u_{\varepsilon
})h(u)(\varphi )$ as a test function in $(5.1)$, we obtain
\begin{equation}\label{eq5.21}
I_{\varepsilon ,l}^{1}+I_{\varepsilon ,l}^{2}+I_{\varepsilon
,l}^{3}+I_{\varepsilon ,l}^{4}=I_{\varepsilon ,l}^{5}
\end{equation}
where
\begin{equation*}
I_{\varepsilon ,l}^{1}=\int_{\Omega }\beta _{\varepsilon }(T_{1/\varepsilon
}(u_{\varepsilon }))h_{l}(u_{\varepsilon })h(u)\varphi ,
\end{equation*}

\begin{equation*}
I_{\varepsilon ,l}^{2}=\varepsilon \int_{\Omega }\arctan (u_{\varepsilon
})h_{l}(u_{\varepsilon })h(u)\varphi ,
\end{equation*}

\begin{equation*}
I_{\varepsilon ,l}^{3}=\int_{\Omega }a(x,Du_{\varepsilon
}).D(h_{l}(u_{\varepsilon })h(u)\varphi ),
\end{equation*}

\begin{equation*}
I_{\varepsilon ,l}^{4}=\int_{\Omega }F(T_{1/\varepsilon }(u_{\varepsilon
})).D(h_{l}(u_{\varepsilon })h(u)\varphi ),
\end{equation*}

\begin{equation*}
I_{\varepsilon ,l}^{5}=\int_{\Omega }fh_{l}(u_{\varepsilon })h(u)\varphi .
\end{equation*}

\underline{Step 1:} Letting $\varepsilon \rightarrow 0$ obviously, we have%
\newline
\begin{equation}\label{eq5.222}
\lim_{\varepsilon \rightarrow 0}I_{\varepsilon ,l}^{2}=0.
\end{equation}
Using the convergence results $(\ref{eq5.9}),((\ref{eq5.11})$ from Lemma \ref{5.6} we can
immediately calculate the following limits:

\begin{equation}\label{eq5.22}
\lim_{\varepsilon \rightarrow 0}I_{\varepsilon ,l}^{1}=\int_{\Omega
}bh_{l}(u)h(u)\varphi ,
\end{equation}

\begin{equation}\label{eq5.23}
\lim_{\varepsilon \rightarrow 0}I_{\varepsilon ,l}^{5}=\int_{\Omega
}fh_{l}(u)h(u)\varphi .
\end{equation}
We write $I_{\varepsilon ,l}^{3}=I_{\varepsilon ,l}^{3,1}+I_{\varepsilon
,l}^{3,2}$ where
\begin{equation*}
\displaystyle I_{\varepsilon ,l}^{3,1}=\int_{\Omega }h_{l}^{\prime
}(u_{\varepsilon })a(x,Du_{\varepsilon }).Du_{\varepsilon }h(u)\varphi ,%
\text{ }I_{\varepsilon ,l}^{3,2}=\int_{\Omega }h_{l}(u_{\varepsilon
})a(x,Du_{\varepsilon })D(h(u)\varphi )..
\end{equation*}
\newline
Using $(\ref{eq5.8})$, we get the estimate
\begin{equation}\label{eq5.24}
|\lim_{\varepsilon \rightarrow 0}I_{\varepsilon ,l}^{3,1}|\leq ||h||_{\infty
}||\varphi ||_{\infty }.C_{2}l^{-(1-1/\bar{p})}.
\end{equation}
By Lebesgue's dominated convergence theorem it follows that for any $i\in
\{1,...,N\}$, we have
\begin{equation*}
h_{l}(u_{\varepsilon })\dfrac{\partial }{\partial x_{i}}(h(u)\varphi
)\rightarrow h_{l}(u)\dfrac{\partial }{\partial x_{i}}(h(u)\varphi
)\hspace*{5mm}in\hspace*{2mm}L^{p_{i}}\hspace*{2mm}as\hspace*{2mm}%
\varepsilon \rightarrow 0.
\end{equation*}
Keeping in mind that $I_{\varepsilon ,l}^{3,2}=\int_{\Omega
}h_{l}(u_{\varepsilon })a(x,DT_{l+1}(u_{\varepsilon })).D(h(u)\varphi )$ and
by using $(\ref{eq5.13})$, we get
\begin{equation}\label{eq5.25}
\lim_{\varepsilon \rightarrow 0}I_{\varepsilon ,l}^{3,2}=\int_{\Omega
}h_{l}(u)a(x,DT_{l+1}(u)).D(h(u)\varphi ).
\end{equation}
Let us write $I_{\varepsilon ,l}^{4}=I_{\varepsilon ,l}^{4,1}+I_{\varepsilon
,l}^{4,2}$, where
\begin{equation*}
I_{\varepsilon ,l}^{4,1}=\int_{\Omega }h_{l}^{\prime }(u_{\varepsilon
})F(T_{1/\varepsilon }(u_{\varepsilon })).Du_{\varepsilon }h(u)\varphi ,
\end{equation*}

\begin{equation*}
I_{\varepsilon ,l}^{4,2}=\int_{\Omega }h_{l}(u_{\varepsilon
})F(T_{1/\varepsilon }(u_{\varepsilon })).D(h(u)\varphi ).
\end{equation*}

For any $l\in \mathbb{N}$, there exists $\varepsilon _{0}(l)$ such that for
all $\varepsilon <\varepsilon _{0}(l),$
\begin{equation}\label{eq5.26}
I_{\varepsilon ,l}^{4,1}=\int_{\Omega }h_{l}^{\prime
}(T_{l+1}(u_{\varepsilon }))F(T_{l+1}(u_{\varepsilon })).h(u)\varphi .
\end{equation}

\noindent Using the Gauss-Green Theorem for Sobolev functions in $(\ref{eq5.26})$,
we get for all $\varepsilon <\varepsilon _{0}(l),$
\begin{equation}\label{eq5.27}
I_{\varepsilon ,l}^{4,1}=-\int_{\Omega }\int_{0}^{T_{l+1}(u_{\varepsilon
})}h_{l}^{\prime }(r)F(r)dr.D(h(u)\varphi ).
\end{equation}

\noindent Now, using $(\ref{eq5.9})$ and the Gauss-Green Theorem, after letting $%
\varepsilon \rightarrow 0$, we get

\begin{equation}\label{eq5.28}
\lim_{\varepsilon \rightarrow }I_{\varepsilon ,l}^{4,1}=\int_{\Omega
}h_{l}^{\prime }(u)F(u).Duh(u)\varphi .
\end{equation}

\noindent Choosing $\varepsilon $ small enough, we can write

\begin{equation}\label{eq5.29}
I_{\varepsilon ,l}^{4,2}=\int_{\Omega }h_{l}(u_{\varepsilon
})F(T_{l+1}(u_{\varepsilon })).D(h(u)\varphi ),
\end{equation}
\noindent and conclude that
\begin{equation}\label{eq5.30}
\lim\limits_{\varepsilon \rightarrow
0}I_{\varepsilon ,l}^{4,2}=\int_{\Omega }h_{l}(u)F(u).D(h(u)\varphi ).
\end{equation} 

\underline{Step 2:} Passage to the limit with $l\rightarrow \infty $.\newline
Combining $(\ref{eq5.21})$ and $(\ref{eq5.222})-(\ref{eq5.30})$ we deduce that
\begin{equation}\label{eq5.31}
I_{l}^{1}+I_{l}^{2}+I_{l}^{3}+I_{l}^{4}+I_{l}^{5}=I_{l}^{6}
\end{equation}
where
\begin{equation*}
\begin{array}{lll}
I_{l}^{1}=\int_{\Omega }bh_{l}(u)h(u)\varphi , &  & I_{l}^{2}=\int_{\Omega
}h_{l}(u)a(x,DT_{l+1}(u)).D(h(u)\varphi ), \\
&  &  \\
|I_{l}^{3}|\leq C_{2}|l^{-(1-1/\bar{p})}||h||_{\infty }||\varphi ||_{\infty
}, &  & I_{l}^{4}=\int_{\Omega }h_{l}(u)F(u).D(h(u)\varphi ), \\
&  &  \\
I_{l}^{5}=\int_{\Omega }h_{l}^{\prime }(u)F(u).Duh(u)\varphi , &  &
I_{l}^{6}=\int_{\Omega }fh_{l}(u)h(u)\varphi .
\end{array}
\end{equation*}
Obviously, we have
\begin{equation}\label{eq5.32}
\lim_{\varepsilon \rightarrow \infty }I_{l}^{3}=0.
\end{equation}
Choosing $m>0$ such that $supp$ $h\subset \lbrack -m,m],$ we can replace $u$
by $T_{m}(u)$ in $I_{l}^{1},I_{l}^{2},...,I_{l}^{6},$ and
\begin{equation*}
h_{l}^{\prime }(u)=h_{l}^{\prime
}(T_{m}(u))=0\hspace*{2mm}if\hspace*{2mm}l+1>m,\text{ }%
h_{l}(u)=h_{l}(T_{m}(u))=0\hspace*{2mm}if\hspace*{2mm}l>m.
\end{equation*}
Therefore, letting $l\rightarrow \infty $ and combining $(\ref{eq5.31})$ with $(\ref{eq5.32})
$ we obtain

\begin{equation}\label{eq5.33}
\int_{\Omega }bh(u)\varphi +\int_{\Omega }(a(x,Du)+F(u)).D(h(u)\varphi
)=\int_{\Omega }fh(u)\varphi
\end{equation}
for all $h\in C_{c}^{1}(\mathbb{R})$ and all $\varphi \in W_{0}^{1,{%
\overrightarrow{p}}}(\Omega )\cap L^{\infty }(\Omega )$. \newline
\underline{Step 3:} Subdifferential argument\newline

It is left to prove that $u(x)\in D(\beta (x))$ and $b(x)\in \beta (u(x))$
for almost all $x\in \Omega $. Since $\beta $ is a maximal monotone graph,
there exist a convex, l.s.c and proper function $j:\mathbb{R}\rightarrow
\lbrack 0,\infty ],$ such that
\begin{equation*}
\beta (r)=\partial j(r)\hspace*{3mm}\text{for all\hspace*{2mm}}r\in \mathbb{R}.
\end{equation*}
According to \cite{ref:8}, for $0<\varepsilon \leq 1,j_{\varepsilon }:%
\mathbb{R}\rightarrow \mathbb{R}$ defined by $j_{\varepsilon
}(r)=\int_{0}^{r}\beta _{\varepsilon }(s)ds$ has the following properties as
in \cite{ref:1}

$i)$ For any $0<\varepsilon \leq 1,j_{\varepsilon }$ is convex and
differentiable for all $r\in \mathbb{R}$, such that
\begin{equation*}
j_{\varepsilon }^{\prime }(r)=\beta _{\varepsilon }(r)\text{ for all }r\in
\mathbb{R}\text{ and any }0<\varepsilon \leq 1.
\end{equation*}
$ii)$ $j_{\varepsilon }(r)\rightarrow j(r)$ for all $r\in \mathbb{R}$ as $%
\varepsilon \rightarrow 0.$\newline
\noindent From $i)$, it follows that for any $0<\varepsilon \leq 1$
\begin{equation}\label{eq5.34}
j_{\varepsilon }(r)\geq j_{\varepsilon }(T_{1/\varepsilon }(u_{\varepsilon
}))+(r-T_{1/\varepsilon }(u_{\varepsilon }))\beta _{\varepsilon
}(T_{1/\varepsilon }(u_{\varepsilon }))
\end{equation}
holds for all $r\in \mathbb{R}$ and almost everywhere in $\Omega $. \newline
Let $E\cup \Omega $ be an arbitrary measurable set and $\chi _{E}$ its
characteristic function. We fix $\varepsilon _{0}>0.$ Multiplying $(\ref{eq5.34})$
by $h_{l}(u_{\varepsilon })\chi _{E}$, integrating over $\Omega $ and using $%
ii)$, we obtain
\begin{equation}\label{eq5.35}
j(r)\int_{E}h_{l}(u_{\varepsilon })\geq \int_{E}j_{\varepsilon
_{0}}(T_{l+1}(u_{\varepsilon }))h_{l}(u_{\varepsilon
})+(r-T_{l+1}h_{l}(u_{\varepsilon })\beta _{\varepsilon }(T_{1/\varepsilon
}(u_{\varepsilon }))
\end{equation}
for all $r\in \mathbb{R}$ and all $0<\varepsilon <min(\varepsilon _{0},%
\dfrac{1}{l}).$ \newline
As $\varepsilon \rightarrow 0$, taking into account that $E$ arbitrary we
obtain from $(\ref{eq5.35})$

\begin{equation}\label{eq5.36}
j(r)h_{l}(u)\geq j_{\varepsilon
_{0}}(T_{l+1}(u))h_{l}(u)+bh_{l}(u)(r-T_{l+1}(u))
\end{equation}
for all $r\in \mathbb{R}$ and almost everywhere in $\Omega $. \newline
Passing to the limit with $l\rightarrow \infty $ and then with $\varepsilon
_{0}\rightarrow 0$ in $(\ref{eq5.36})$ finally yields
\begin{equation}\label{eq5.37}
j(r)\geq j(u(x))+b(x)(r-u(x))
\end{equation}
for all $r\in \mathbb{R}$ and almost everywhere in $\Omega $, hence $u\in
D(\beta )$ and $b\in \beta (u)$ for almost everywhere in $\Omega $. With
this last step the proof of Theorem \ref{5.1} is concluded.
\end{proof}

\section{Case where $f\in L^{1}(\Omega )$}

\subsection{Approximate solution for $L^1$- data}

The comparison principle from proposition will be the tool in second
approximation procedure. For $f\in L^{1}(\Omega )$ and $m,n\in \mathbb{N}$
let $f_{m,n}\in L^{\infty }(\Omega )$ be defined as in Section 3. Using
Propsition \ref{4.3}, we deduce that for any $m,n\in \mathbb{N,}$ there
exists $u_{m,n}\in W_{0}^{1,{\overrightarrow{p}}}(\Omega ),b_{m,n}\in
L^{\infty }(\Omega )$, such that $(u_{m,n},b_{m,n})$ is a renormalized
solution of $(E,f_{m,n})$. Therefore
\begin{equation}\label{eq6.1}
\int_{\Omega }b_{m,n}h(u_{m,n})\phi +\int_{\Omega
}(a(x,Du_{m,n})+F(u_{m,n})).D(h(u_{m,n})\phi )=f_{m,n}h(u_{m,n})\phi
\end{equation}
holds for all $m,$ $n\in \mathbb{N},$ $h\in C_{c}^{1}(\mathbb{R}),\phi \in
W_{0}^{1,{\overrightarrow{p}}}(\Omega )\cap L^{\infty }(\Omega )$. In the
next lemma, we give a priori estimates that will be important in the the
following:

\begin{lemma}\label{6.1}
\label{ref:1111} For $m,n\in \mathbb{N,}$ let $(u_{m,n},b_{m,n})$ be a
renormalized solution of $(E,f_{m,n})$ . Then,\newline
$i)$ For any $k>0$ we have,
\begin{equation}\label{eq6.2}
\sum_{i=1}^{N}\int_{\Omega }|DT_{k}(u_{m,n})|^{p_{i}}\leq \dfrac{k}{\gamma }%
\Vert f\Vert _{1}
\end{equation}
$ii)$ for any $k>0$, there exists a constant $C_{3}(k)>0$, not depending on $%
m,n\in \mathbb{N}$, such that
\begin{equation}\label{eq6.3}
\sum_{i=1}^{N}\int_{\Omega }|DT_{k}(u_{m,n})|^{p_{i}}\leq C_{3}(k).
\end{equation}
\noindent $iii)$ For $m,n\in \mathbb{N}$, we have: 
\begin{equation}\label{eq6.333}
\Vert b_{m,n}\Vert_1\leq \Vert f \Vert_1 .
\end{equation}

\end{lemma}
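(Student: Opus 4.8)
The plan is to run three largely independent arguments, in each case choosing a good test function in the renormalized formulation \eqref{eq6.1} and exploiting coercivity $(\mathbf{H}_1)$, the bound $|f_{m,n}|\le |f|$, and the vanishing of the convection term on level sets. For part $i)$, I would take $h=h_l$ with $l$ large and $\phi=T_k(u_{m,n})$ in \eqref{eq6.1}. For $l\ge k$ one has $h_l(u_{m,n})=1$ on $\{|u_{m,n}|\le k\}$, so $h_l(u_{m,n})T_k(u_{m,n})=T_k(u_{m,n})$ and the $h_l'$-contributions are supported on $\{l<|u_{m,n}|<l+1\}$, which we later send off to infinity (or handle exactly as in Lemma \ref{5.4}$iii)$ / Remark \ref{5.5}). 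The term $\int_\Omega b_{m,n}h_l(u_{m,n})T_k(u_{m,n})$ is nonnegative since $b_{m,n}\in\beta(u_{m,n})$, $0\in\beta(0)$ and $T_k$ is nondecreasing through the origin, so by monotonicity of $\beta$ the product $b_{m,n}T_k(u_{m,n})\ge 0$. The convection term $\int F(u_{m,n})\cdot D T_k(u_{m,n})$ vanishes: writing $\widetilde F$ for a primitive of $F$ it equals $\int \mathrm{div}\,\widetilde F(T_k(u_{m,n}))=0$ by the boundary condition, exactly as in the $L^\infty$ case. Hence $(\mathbf{H}_1)$ gives
\begin{equation*}
\lambda\sum_{i=1}^N\int_\Omega |DT_k(u_{m,n})|^{p_i}
\le \int_\Omega a(x,Du_{m,n})\cdot DT_k(u_{m,n})
\le \int_\Omega f_{m,n}\,T_k(u_{m,n})
\le k\|f\|_1 ,
\end{equation*}
which is \eqref{eq6.2} after absorbing $\lambda$ (the paper writes $\gamma$; I would keep the coercivity constant consistently). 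One caveat: $T_k(u_{m,n})\notin L^\infty$ is automatic but it must lie in $W_0^{1,\vec p}(\Omega)\cap L^\infty(\Omega)$ to be an admissible $\phi$ — it does, since $\|T_k(u_{m,n})\|_\infty\le k$ and $(\mathbf{R}_2)$ guarantees $T_k(u_{m,n})\in W_0^{1,\vec p}(\Omega)$.

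Part $ii)$ is then immediate: \eqref{eq6.2} already gives the bound $C_3(k):=\tfrac{k}{\lambda}\|f\|_1$, which depends only on $k$, $\lambda$ and $\|f\|_1$ and not on $m,n$; so there is essentially nothing to prove beyond restating $i)$ with a name for the constant.

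For part $iii)$ I would take $h=h_l$ and $\phi=H_\sigma(u_{m,n})$ (or $T_1(u_{m,n})/\sigma$ truncated as $\sigma\to0$), i.e. an approximation of $\mathrm{sign}_0(u_{m,n})$, in \eqref{eq6.1}, then let $\sigma\to0$ and $l\to\infty$. The gradient terms are controlled: $\int a(x,Du_{m,n})\cdot D H_\sigma(u_{m,n})=\tfrac1\sigma\int_{\{|u_{m,n}|<\sigma\}}a(x,Du_{m,n})\cdot Du_{m,n}\ge 0$ by $(\mathbf{H}_1)$, and it vanishes in the limit by the analogue of \eqref{eq5.8}; the convection term again vanishes by the boundary condition. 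What survives is $\int_\Omega b_{m,n}\,\mathrm{sign}_0(u_{m,n})=\int_\Omega f_{m,n}\,\mathrm{sign}_0(u_{m,n})$, and since $b_{m,n}\in\beta(u_{m,n})$ with $0\in\beta(0)$ one has $b_{m,n}\,\mathrm{sign}_0(u_{m,n})=|b_{m,n}|$ on $\{b_{m,n}\ne0\}$ — more precisely $b_{m,n}$ and $u_{m,n}$ have the same sign, so $b_{m,n}\,\mathrm{sign}_0(u_{m,n})=|b_{m,n}|$ a.e. Therefore $\|b_{m,n}\|_1=\int_\Omega b_{m,n}\,\mathrm{sign}_0(u_{m,n})=\int_\Omega f_{m,n}\,\mathrm{sign}_0(u_{m,n})\le\int_\Omega|f_{m,n}|\le\|f\|_1$, which is \eqref{eq6.333}.

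The main obstacle is the limit passage in $iii)$: one must justify that $\tfrac1\sigma\int_{\{|u_{m,n}|<\sigma\}}a(x,Du_{m,n})\cdot Du_{m,n}\to 0$ as $\sigma\to0$, and that the cutoff $h_l$ can be removed, i.e. that the $h_l'$-terms $\int h_l'(u_{m,n})\,a(x,Du_{m,n})\cdot Du_{m,n}\,\mathrm{sign}_0(u_{m,n})$ vanish as $l\to\infty$ — this is where the energy estimate on annuli $\{l<|u_{m,n}|<l+1\}$ coming from $(\mathbf{R}_3)$ / \eqref{eq5.8} is essential, and one has to check it is uniform in $m,n$ (it is, because the right-hand side is $k\|f\|_1$). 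A subtle point worth stating carefully is the identity $b_{m,n}\,\mathrm{sign}_0(u_{m,n})=|b_{m,n}|$ a.e.: on $\{u_{m,n}=0\}$ we have $\mathrm{sign}_0(u_{m,n})=0$ but also, since $\beta$ is monotone and $0\in\beta(0)$, the part of $\int b_{m,n}$ coming from that set is not automatically zero — however on $\{u_{m,n}=0\}$ both $b_{m,n}\,\mathrm{sign}_0(u_{m,n})$ and the corresponding piece of the right-hand side $f_{m,n}\,\mathrm{sign}_0(u_{m,n})$ vanish, so the identity $\|b_{m,n}\|_1\le\|f\|_1$ should instead be obtained by first noting $\int_{\{u_{m,n}\ne0\}}|b_{m,n}|=\int_{\{u_{m,n}\ne0\}}f_{m,n}\,\mathrm{sign}_0(u_{m,n})\le\|f\|_1$ and then separately that $b_{m,n}=0$ a.e. on $\{u_{m,n}=0\}$ is \emph{not} needed — one only needs $\|b_{m,n}\|_{L^1}\le\|f\|_1$, which can fail if $\beta(0)$ is a nondegenerate interval. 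I would therefore either restrict to the case $\beta(0)=\{0\}$ or, better, test instead with an approximation of $\mathrm{sign}_0(b_{m,n})$ (using $b_{m,n}\in L^\infty$ here) so that the left side is genuinely $\|b_{m,n}\|_1$; this is the one place where some care is required.
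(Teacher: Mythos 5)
Your argument for $i)$ and $ii)$ is essentially the paper's: plug $h_l(u_{m,n})T_k(u_{m,n})$ into $(\ref{eq6.1})$ with $l\ge k$, drop the nonnegative $\beta$-term (monotonicity of $\beta$ with $0\in\beta(0)$ gives $b_{m,n}T_k(u_{m,n})\ge 0$), annihilate the convection term via a primitive and the zero boundary condition, control the $h_l'$-contribution by the energy on the annulus $\{l<|u_{m,n}|<l+1\}$, and apply $(\mathbf{H}_1)$; part $ii)$ is indeed just $i)$ with a named constant, and you are right that the coercivity constant should be $\lambda$ rather than $\gamma$. For $iii)$ your route is also the paper's in disguise: the paper keeps $\int_\Omega b_{m,n}T_k(u_{m,n})\le\int_\Omega f_{m,n}T_k(u_{m,n})$, divides by $k$ and lets $k\to 0$, which is exactly testing with $H_\sigma(u_{m,n})=\tfrac1\sigma T_\sigma(u_{m,n})$ and sending $\sigma\to 0$. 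The subtlety you flag is genuine, and it is present in the paper as well: this argument only yields $\int_{\{u_{m,n}\ne 0\}}|b_{m,n}|\le\|f\|_1$, and says nothing about $b_{m,n}$ on $\{u_{m,n}=0\}$, where $b_{m,n}\in\beta(0)$ may be nonzero if $\beta(0)$ is a nondegenerate interval. However, your proposed repair does not work as stated: $H_\sigma(b_{m,n})$ is not an admissible $\phi$ in $(\mathbf{R}_2)$, since $b_{m,n}$ is merely an $L^\infty$ function and there is no reason for $H_\sigma(b_{m,n})$ to lie in $W_0^{1,\overrightarrow{p}}(\Omega)$. The standard place to run that computation is one level down, in the Yosida approximation $(E_\varepsilon,f_{m,n})$: there $b^\varepsilon_{m,n}=\beta_\varepsilon(T_{1/\varepsilon}(u^\varepsilon_{m,n}))$ is a Lipschitz, nondecreasing function of $u^\varepsilon_{m,n}$ vanishing at $0$, so $H_\sigma(\beta_\varepsilon(T_{1/\varepsilon}(u^\varepsilon_{m,n})))$ is an admissible test function, the elliptic term it produces is nonnegative by $(\mathbf{H}_1)$ and the chain rule, and one gets $\|b^\varepsilon_{m,n}\|_1\le\|f\|_1$ uniformly in $\varepsilon$; the bound then passes to $b_{m,n}$ by lower semicontinuity of the $L^1$-norm under the weak-$*$ convergence $(\ref{eq5.11})$. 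With that amendment (or with the standing normalization $\beta(0)=\{0\}$, or a separate argument that $b_{m,n}=0$ a.e.\ on $\{u_{m,n}=0\}$), your proof of $iii)$ is complete.
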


\begin{proof}
For $l,k>0$, we plug $h_{l}(u_{m,n})T_{k}(u_{m,n})$ as a test function in $
(\ref{eq6.1})$. Then $i)$ and $ii)$ follows with similar arguments as used in the
proof of Lemma \ref{5.4}. To prove $iii)$, we neglet the positve term
\begin{equation*}
\int_{\Omega }a(x,DT_{k}(u_{m,n}))DT_{k}(u_{m,n})
\end{equation*}
and keep
\begin{equation}\label{eq6.4}
\int_{\Omega }b_{m,n}T_{k}(u_{m,n})\leq \int_{\Omega }f_{m,n}T(u_{m,n}).
\end{equation}
Since $b_{m,n}\in \beta (u_{m,n})$ a.e. in $\Omega $, it follows from $(\ref{eq6.4})$
that
\begin{equation}\label{eq6.5}
\int_{|u_{m,n}|>k}|b_{m,n}|\leq \int_{\Omega }|f|.
\end{equation}

and we find $iii)$ by passing to the limit with $k\rightarrow 0$.
\end{proof}

By definition we have
\begin{equation}\label{eq6.6}
f_{m,n}\leq f_{m+1,n}\hspace{4mm}and\hspace{4mm}f_{m,n+1}\leq f_{m,n}
\end{equation}
From Propostion \ref{5.2} it follows that
\begin{equation}\label{eq6.7}
u_{m,n}^{\varepsilon }\leq u_{m+1,n}^{\varepsilon }\hspace{4mm}and\hspace{4mm%
}u_{m,n+1}^{\varepsilon }\leq u_{m,n}^{\varepsilon },
\end{equation}
almost everywhere in $\Omega $ for any $m,n\in \mathbb{N}$ and all $%
\varepsilon >0$. \newline
Hence passing to the limit with $\varepsilon \rightarrow 0$ in $(\ref{eq6.7})$ yields
\begin{equation}\label{eq6.8}
u_{m,n}\leq u_{m+1,n}\hspace{4mm}and\hspace{4mm}u_{m,n+1}\leq u_{m,n},
\end{equation}
almost everywhere in $\Omega $ for any $m,n\in \mathbb{N}$. \newline
Setting $b_{\varepsilon }:=\beta _{\varepsilon }(T_{1/\varepsilon }
(u_{\varepsilon })),$ using $(\ref{eq6.7})$, Remark \ref{5.3} and the fact that $%
b_{m,n}^{\varepsilon }\rightharpoonup b_{m,n}$ in $L^{\infty }(\Omega )$ and
since this convergence preserves order we get
\begin{equation}\label{eq6.9}
b_{m,n}\leq b_{m+1,n}\hspace{4mm}and\hspace{4mm}b_{m,n+1}\leq b_{m,n}
\end{equation}
almost everywhere in $\Omega $ for any $m,n\in \mathbb{N}$. By $(\ref{eq6.9})$ and $%
(\ref{eq6.333})$, for any $n\in \mathbb{N}$ there exist $b^{n}\in L^{1}(\Omega )$ such
that $b_{m,n}\rightarrow b^{n}$ and $m\rightarrow \infty $ in $L^{1}(\Omega )
$ and almost everywhere and $b\in L^{1}(\Omega )$, such that $%
b^{n}\rightarrow b$ as $n\rightarrow \infty $ in $L^{1}(\Omega )$ and almost
every where in $\Omega $. By $(\ref{eq6.8})$, the sequence $(u_{m,n})_{m}$ is
monotone increasing, hence, for any $n\in \mathbb{N},u_{m,n}\rightarrow u^{n}
$ almost everywhere in $\Omega $, where $u^{n}:\Omega \rightarrow \mathbb{%
\overline{R}}$ is a mesurable function. In order to show that $u$ is finite
almost everywhere we will give an estimate on the level sets of $u_{m,n}$ in
the next lemma:

\begin{lemma}\label{6.2}
For $m,n\in \mathbb{N}$ let $(u_{m,n},b_{m,n})$ be a renormalized solution
of $(E,f_{m,n})$. Then, there exists a constant $C_{4}>0$, not depending on $%
m,n\in \mathbb{N}$, such that
\begin{equation}\label{eq6.10}
|\{|u_{m,n}|\geq l\}|\leq C_{4}l^{\frac{1}{\overline{p}}-1}
\end{equation}
for all $l\geq 0$.
\end{lemma}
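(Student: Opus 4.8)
The plan is to deduce $(\ref{eq6.10})$ directly from the $m,n$-uniform energy bound of Lemma \ref{6.1}, using the same device as the computation after Remark \ref{5.5}: control the measure of a super-level set of $u_{m,n}$ through the $L^{1}$-norm of a truncation, and estimate the latter by the anisotropic Sobolev inequality. Fix $l>0$. By $(\mathbf{R}_{2})$ we have $T_{l}(u_{m,n})\in W_{0}^{1,{\overrightarrow{p}}}(\Omega)$, and since $\Omega$ is bounded and $\overline{p}^{\ast}\geq 1$ (because $\overline{p}\geq 1$), the Sobolev inequality $(\ref{eq2.2})$ together with the embedding $L^{\overline{p}^{\ast}}(\Omega)\hookrightarrow L^{1}(\Omega)$ yields
$$\Vert T_{l}(u_{m,n})\Vert_{L^{1}(\Omega)}\leq C\prod_{i=1}^{N}\Vert\partial_{x_{i}}T_{l}(u_{m,n})\Vert_{L^{p_{i}}(\Omega)}^{1/N}.$$

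Next I would insert the estimate $(\ref{eq6.2})$ of Lemma \ref{6.1}: for every $i$ one has $\int_{\Omega}|\partial_{x_{i}}T_{l}(u_{m,n})|^{p_{i}}\leq\frac{l}{\gamma}\Vert f\Vert_{1}$, so $\Vert\partial_{x_{i}}T_{l}(u_{m,n})\Vert_{L^{p_{i}}(\Omega)}^{1/N}\leq(\frac{l}{\gamma}\Vert f\Vert_{1})^{1/(Np_{i})}$; multiplying over $i=1,\dots,N$ and using $\frac{1}{N}\sum_{i=1}^{N}\frac{1}{p_{i}}=\frac{1}{\overline{p}}$, the product collapses to $(\frac{l}{\gamma}\Vert f\Vert_{1})^{1/\overline{p}}$, hence $\Vert T_{l}(u_{m,n})\Vert_{L^{1}(\Omega)}\leq C(\Vert f\Vert_{1}/\gamma)^{1/\overline{p}}\,l^{1/\overline{p}}$. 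Since $|T_{l}(u_{m,n})|=l$ on $\{|u_{m,n}|\geq l\}$, this gives
$$l\,|\{|u_{m,n}|\geq l\}|=\int_{\{|u_{m,n}|\geq l\}}|T_{l}(u_{m,n})|\,dx\leq \Vert T_{l}(u_{m,n})\Vert_{L^{1}(\Omega)}\leq C\Big(\frac{\Vert f\Vert_{1}}{\gamma}\Big)^{1/\overline{p}}l^{1/\overline{p}},$$
and dividing by $l$ yields $(\ref{eq6.10})$ with $C_{4}:=C(\Vert f\Vert_{1}/\gamma)^{1/\overline{p}}$, a constant depending only on $f$, $\gamma$, $N$ and the exponents $p_{i}$, and not on $m,n$. (The degenerate value $l=0$ is covered trivially, after replacing $C_{4}$ by $\max(C_{4},|\Omega|)$ in the borderline case $\overline{p}=1$.)

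I do not expect a genuine obstacle here: Lemma \ref{6.2} is essentially a restatement of Lemma \ref{6.1} through the Sobolev embedding. The two points deserving a little care are (i) that $(\ref{eq6.2})$ is expressed in terms of $\Vert f\Vert_{1}$ rather than $\Vert f_{m,n}\Vert_{1}$ — which is exactly what makes $C_{4}$ independent of $m$ and $n$ — and (ii) that one must use the geometric-mean form $(\ref{eq2.2})$ of the anisotropic Sobolev inequality, not the arithmetic-mean form $(\ref{eq2.3})$, in order to recover precisely the power $l^{1/\overline{p}}$ and hence the exponent $\frac{1}{\overline{p}}-1$ claimed in $(\ref{eq6.10})$. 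As a byproduct, letting $m\to\infty$ (using $u_{m,n}\to u^{n}$ a.e. and Fatou's lemma) the same bound passes to $u^{n}$, so that $u^{n}$ is finite almost everywhere in $\Omega$ when $\overline{p}>1$, which is the use to which this lemma is put.
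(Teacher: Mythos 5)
Your proof is correct and follows essentially the same route as the paper: the paper likewise bounds $l\,|\{|u_{m,n}|\geq l\}|$ by $\int_{\Omega}|T_{l}(u_{m,n})|\,dx$, invokes the anisotropic Sobolev embedding as in Remark \ref{5.5}, and then plugs in the uniform energy estimate $(\ref{eq6.2})$ to get the exponent $\frac{1}{\overline{p}}-1$. Your explicit use of the geometric-mean form $(\ref{eq2.2})$ to collapse the product to $l^{1/\overline{p}}$ is in fact a cleaner rendering of the exponent bookkeeping than the paper's displayed inequality $(\ref{eq6.11})$.
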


\begin{proof}
With the same arguments as in remark $\ref{5.5}$ we obtain

\begin{equation}\label{eq6.11}
|\{|u_{m,n}|\}|\leq C(\overline{p},N)l^{\overline{p}^{-}}(\sum_{i=1}^{N}%
\int_{\Omega }\left| DT_{k}(u_{m,n})\rvert ^{p_{i}}+|\Omega |\right)
\end{equation}
for all $m,n\in \mathbb{N}$ where $C(\overline{p},N)$ is the constant from
Sobolev embedding in $(\ref{eq2.5})$. Now we plug $(\ref{eq6.2})$ into $(\ref{eq6.11})$ to obtain $%
(\ref{eq6.10})$. Note that, as $(u_{m,n})_m$ is pointwise increasing with respect to $m$,
\begin{equation}\label{eq6.12}
\lim_{m\rightarrow \infty }|\{u_{m,n}\geq l\}|=|\{u^{n}\geq l\}|
\end{equation}
and
\begin{equation}\label{eq6.13}
\lim_{m\rightarrow \infty }|\{u_{m,n}\leq -l\}|=|\{u^{n}\leq -l\}|.
\end{equation}
Combining $(\ref{eq6.10})$ with $(\ref{eq6.12})$ and $(\ref{eq6.13})$ we get
\begin{equation}\label{eq6.14}
|\{u^{n}\leq -l\}|+|\{u^{n}>l\}|\leq C_{4}l^{\frac{1}{\overline{p}}-1}
\end{equation}
for any $l\geq 1$, hence $u^{n}$ is finite almost everywhere for $n\in
\mathbb{N}$. By the same arguments we get
\begin{equation}\label{eq6.15}
|\{u<-l\}|+|\{u>l\}|\leq C_{4}l^{\frac{1}{\overline{p}}-1}
\end{equation}
from $(\ref{eq6.14})$, hence $u$ is finite almost eveyrywhere. Now, since $%
b_{m,n}\in \beta (u_{m,n})$ almost everywhere in $\Omega $ it follows by a
subdifferential argument that $b^{n}\in \beta (u^{n})$ and $b\in \beta (u)$
a,e. in $\Omega $.

\end{proof}

\begin{remark}\label{6.3}
If $(u_{m,n},b_{m,n})$ is renormalized solution of $(E,f_{m,n})$, using
\newline
$h_{\nu }(u_{m,n})T_{k}(u_{m,n}-T_{l}(u_{m,n}))$ as a test function in $(\ref{eq6.1})
$ , neglecting positive terms and passing to the limit with $\nu \rightarrow
\infty $ we obtain
\begin{equation}\label{eq6.16}
\int_{\{l<|u_{m,n}|<l+k\}}a(x,Du_{m,n}).Du_{m,n}\leq k\Bigg\lgroup%
\int_{\{|u_{m,n}|>l\}\cap \{|f|<\sigma \}}|f|+\int_{\{|f|>\sigma \}}|f|%
\Bigg\rgroup
\end{equation}
for any $k,\sigma >0,l$. Now applying $(\ref{eq6.10})$ to $(\ref{eq6.16})$, we find that
\begin{equation}\label{eq6.17}
\int_{\{l<|u_{m,n}|<l+k\}}a(x,Du_{m,n}).Du_{m,n}\leq \sigma kC_{4}l^{\frac{1%
}{\overline{p}}-1}+k\int_{\{|f|>\sigma \}}|f|
\end{equation}
holds for any $k,\sigma >0,l\geq 0$ uniformly in $m,n\in \mathbb{N}.$
\end{remark}

\subsection{Basic convergence results}

\begin{lemma}\label{6.4}
For $m,n\in \mathbb{N}$ let $(u_{m,n},b_{m,n})$ be a renormalized soltuion
of $(E,F_{m,n})$. There exists a subsequence $(m(n))_{n}$ such that setting $%
f_{n}:=f_{m(n),n},b_{n}:=b_{m(n),n},$ $u_{n}:=u_{m(n),n}$  we have
\begin{equation}\label{eq6.18}
u_{n}\rightarrow u\hspace{4mm}almost\hspace{2mm}everywhere\hspace{2mm}in%
\hspace{2mm}\Omega .
\end{equation}
Moreover, for any $k>0,$
\begin{equation}\label{eq6.19}
T_{k}(u_{n})\rightarrow T_{k}(u)\hspace{2mm}in\hspace{2mm}W_{0}^{1,{%
\overrightarrow{p}}}(\Omega ),
\end{equation}
\begin{equation}\label{eq6.20}
DT_{k}(u_{n})\rightharpoonup DT_{k}(u)\hspace{2mm}in\hspace*{2mm}%
\prod_{i=1}^{N}L^{p_{i}}(\Omega ),
\end{equation}
\begin{equation}\label{eq6.21}
a(x,DT_{k}(u_{n}))\rightharpoonup a(x,DT_{k}(u))\hspace{2mm}%
in\hspace*{2mm}\prod_{i=1}^{N}L^{p_{i}^{\prime }}(\Omega ),
\end{equation}
as $n\rightarrow \infty $.
\end{lemma}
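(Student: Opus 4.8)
The plan is to run, at the level $n\to\infty$, the same three–step scheme that was used for the $\varepsilon$–approximation in Lemma~\ref{5.6}, the role of the uniform energy estimate \eqref{eq5.8} now being taken by \eqref{eq6.17} of Remark~\ref{6.3}. First, for the diagonal extraction and \eqref{eq6.18}: by the monotonicity \eqref{eq6.8}, for each $n$ the sequence $(u_{m,n})_m$ increases a.e. to some $u^n$, and $(u^n)_n$ decreases a.e. to a limit $u$ which, by Lemma~\ref{6.2}, is finite a.e.; likewise $b_{m,n}\to b^n\to b$ and $f_{m,n}\to\max(f,-n)\to f$ in $L^1(\Omega)$. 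Metrizing convergence in measure on the bounded set $\Omega$ by $\rho(v,w)=\int_\Omega\min(|v-w|,1)$, all these convergences are convergences for $\rho$, so one may choose $m(n)\to\infty$ fast enough that, along a further subsequence, $u_n:=u_{m(n),n}\to u$ a.e. in $\Omega$, while $b_n:=b_{m(n),n}\to b$ and $f_n:=f_{m(n),n}\to f$ in $L^1(\Omega)$. This gives \eqref{eq6.18}.

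Next, the weak limits \eqref{eq6.20}--\eqref{eq6.21} (pending identification). By \eqref{eq6.3} the family $(DT_k(u_n))_n$ is bounded in $\prod_{i=1}^N L^{p_i}(\Omega)$, so along a subsequence $DT_k(u_n)\rightharpoonup\sigma_k$; since $u_n\to u$ a.e. forces $T_k(u_n)\to T_k(u)$ a.e., hence (being bounded, using also the compact embedding \eqref{eq2.5}) strongly in every $L^q(\Omega)$, one identifies $\sigma_k=DT_k(u)$ in $\mathcal{D}'(\Omega)$, so $T_k(u)\in W_0^{1,\overrightarrow{p}}(\Omega)$ and \eqref{eq6.20} holds; a Cantor diagonal in $k\in\mathbb{N}$ fixes one subsequence valid for all $k$. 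By $(\mathbf H_2)$ the sequence $a(x,DT_k(u_n))$ is then bounded in $\prod_{i=1}^N L^{p_i'}(\Omega)$, so $a(x,DT_k(u_n))\rightharpoonup\Phi_k$ along a further subsequence, and it remains to prove $\Phi_k=a(x,DT_k(u))$, which will also yield \eqref{eq6.21}.

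The heart of the argument is the strong convergence \eqref{eq6.19}. Fix $\nu>k$ and insert the admissible test function $h_\nu(u_n)\big(T_k(u_n)-T_k(u)\big)$ in \eqref{eq6.1}. Since $|h_\nu(u_n)(T_k(u_n)-T_k(u))|\le 2k$ and this factor tends to $0$ a.e. while $f_n\to f$, $b_n\to b$ in $L^1(\Omega)$, the $f$– and $b$–terms tend to $0$ as $n\to\infty$; the convection term tends to $0$ after $n\to\infty$ and then $\nu\to\infty$, by rewriting its $h_\nu'$–part via the Gauss--Green theorem as an integral of a primitive of $h_\nu'F$ and controlling the remaining annular contribution on $\{\nu<|u_n|<\nu+1\}$ by \eqref{eq6.17}. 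Splitting $\int_\Omega a(x,DT_k(u_n)).D[h_\nu(u_n)(T_k(u_n)-T_k(u))]$ into the three pieces $E_1^n,E_2^n,E_3^n$ exactly as in Step~2 of Lemma~\ref{5.6} — using $h_\nu\equiv1$ on $\{|u_n|\le k\}$, $DT_k(u_n)\equiv0$ on $\{|u_n|>k\}$ with the weak limit $\Phi_k$ for $E_2^n$, and $|E_3^n|\le 2k\int_{\{\nu<|u_n|<\nu+1\}}a(x,Du_n).Du_n\to0$ from \eqref{eq6.17} as $\nu\to\infty$ uniformly in $n$ — one arrives at
\begin{equation*}
\limsup_{n\to\infty}\int_\Omega a(x,DT_k(u_n)).\big(DT_k(u_n)-DT_k(u)\big)\,dx\le 0 .
\end{equation*}
Adding and subtracting and using $a(x,DT_k(u_n))\rightharpoonup\Phi_k$, $DT_k(u_n)\rightharpoonup DT_k(u)$, $\int_\Omega a(x,DT_k(u)).DT_k(u_n)\to\int_\Omega a(x,DT_k(u)).DT_k(u)$ together with the monotonicity $(\mathbf H_3)$, gives $\int_\Omega[a(x,DT_k(u_n))-a(x,DT_k(u))].[DT_k(u_n)-DT_k(u)]\to0$ with a nonnegative integrand; the standard monotonicity lemma then yields $DT_k(u_n)\to DT_k(u)$ a.e. along a subsequence, whence, by $(\mathbf H_2)$ and Vitali's theorem, strong convergence in $\prod_{i=1}^N L^{p_i}(\Omega)$, i.e.\ \eqref{eq6.19}. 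The Minty trick (testing with $a(x,DT_k(u)+\alpha\phi)$, dividing by $\alpha$, letting $\alpha\to0$) finally identifies $\Phi_k=a(x,DT_k(u))$ a.e., which is \eqref{eq6.21}.

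The main obstacle is establishing $\limsup_n\int_\Omega a(x,DT_k(u_n)).(DT_k(u_n)-DT_k(u))\le0$: one must absorb the convection term, which sees the full gradient $Du_n$ on the annuli $\{\nu<|u_n|<\nu+1\}$, and the tail contribution $E_3^n$, \emph{uniformly in $n$} — which is exactly what the $(m,n)$–uniform estimate \eqref{eq6.17} of Remark~\ref{6.3} supplies — and it is essential that the iterated limits be taken in the order $n\to\infty$ first, then $\nu\to\infty$.
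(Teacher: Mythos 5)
Your proposal is correct and follows essentially the same route as the paper: a diagonal subsequence built from the monotonicity \eqref{eq6.8} and the $L^1$ convergences of $b_{m,n}$ and $f_{m,n}$, the a priori bounds \eqref{eq6.2}--\eqref{eq6.3} for the weak limits, the test function $h_l(u_n)(T_k(u_n)-T_k(u))$ in \eqref{eq6.1} to obtain the limsup inequality \eqref{eq6.22}, and then the monotonicity/Minty arguments of Lemma~\ref{5.6} for \eqref{eq6.21}. You are in fact somewhat more careful than the paper, which asserts the strong convergence \eqref{eq6.19} directly after the a.e.\ convergence, whereas you correctly derive it from $\int_\Omega[a(x,DT_k(u_n))-a(x,DT_k(u))]\cdot[DT_k(u_n)-DT_k(u)]\to 0$ via a.e.\ convergence of gradients and Vitali (note only that the equi-integrability there comes from the coercivity $(\mathbf{H}_1)$ rather than $(\mathbf{H}_2)$).
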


\begin{proof}
We construct a subsequence $(m(n))_{n}$, such that
\begin{equation*}
\arctan (u_{m(n),n})\rightarrow \arctan (u),
\end{equation*}
\begin{equation*}
b_{n}:=b_{m(n),n}\rightarrow b,
\end{equation*}
\begin{equation*}
f_{n}:=f_{m(n),n}\rightarrow f
\end{equation*}
as $n\rightarrow \infty $ in $L^{1}(\Omega )$ and almost everywhere in $%
\Omega $. It follows that $(\ref{eq6.18})$ and $(\ref{eq6.19})$ hold. Combining $(\ref{eq6.19})$
with $(\ref{eq6.3})$ we get $T_{k}(u)\in W_{0}^{1,{\overrightarrow{p}}}(\Omega ), T_k(u_n) \rightarrow T_k(u) \in W_{0}^{1,{\overrightarrow{p}}}(\Omega ) $
and $(\ref{eq6.20})$ holds for any $k>0$. From $(\ref{eq6.2})$ and $(H_{2}),$ it follows
that for fixed $k>0$, given any subsequence of $(a(x,DT_{k}(u_{n})))_{n}$ there exists a subsequence, still denoted by 
such that $a(x,DT_{k}(u_{n}))_{n}$, such that  
\begin{equation*}
a(x,DT_{k}(u_{n}))_{n}\rightharpoonup \Phi _{k}\hspace{3mm}in\hspace{2mm}%
\prod_{i=1}^{N}L^{p_{i}^{\prime }}(\Omega )
\end{equation*}
as $n\rightarrow \infty $. Since $h_{l}(u_{n})(T_{k}(u_{n})-T_{k}(u))$ is an
admissible test function in $(\ref{eq6.1})$,
\begin{equation}\label{eq6.22}
\lim_{n\rightarrow \infty }\sup \int_{\Omega
}a(x,DT_{k}(u_{n}))D(T_{k}(u_{n})-T_{k}(u))\leq 0.
\end{equation}
Then, $(\ref{eq6.21})$ follows with the same arguments as int the proof of Lemma \ref{5.6}.
\end{proof}

\begin{remark}
With the same arguments as in Remark \ref{5.7}, we have
\begin{equation}\label{eq6.23}
\lim_{n\rightarrow \infty }\int_{\Omega
}a(x,DT_{k}(u_{n}) - a(x,DT_{k}(u))). D(T_{k}(u_{n})-T_{k}(u))=0,
\end{equation}
\begin{equation}\label{eq6.24}
\lim_{l\rightarrow \infty }\int_{\{l<|u|<l+1\}}a(x,Du).
Du=0.
\end{equation}
\end{remark}

\subsection{Conclusion of the proof of Theorem \ref{4.1}}

It is left to prove thet $(u,b)$ satisfies
\begin{equation}\label{eq6.25}
\int_\Omega bh(u)\phi + \int_{\Omega} (a(x,Du)+F(u)). D(h(u)\phi)=
\int_{\Omega}fh(u)\phi.
\end{equation}
for all $h\in C^1_c(\mathbb{R})$ and $\phi \in W^{1,{\overrightarrow{p}}%
}_0(\Omega) \cap L^\infty (\Omega )$. To this end, we take $h\in C^1_c(%
\mathbb{R})$ and $\phi \in W^{1,{\overrightarrow{p}}}_0(\Omega) \cap
L^\infty (\Omega )$ arbitrary and plug $h_l(u_n)h(u)\phi$ into $(\ref{eq6.1})$ to
obtain

\begin{equation}\label{eq6.26}
I_{n,l}^{1}+I_{n,l}^{2}+I_{n,l}^{3}=I_{n,l}^{4},
\end{equation}
where
\begin{equation*}
I_{n,l}^{1}=\int_{\Omega }b_{n}h_{l}(u_{n})h(u)\phi ,
\end{equation*}

\begin{equation*}
I^{2}_{n,l}= \int_{\Omega} a(x,Du_n).D(h_l(u_n)h(u)\phi),
\end{equation*}

\begin{equation*}
I^{3}_{n,l}= \int_{\Omega} F(u_n).D(h_l(u_n)h(u)\phi),
\end{equation*}

\begin{equation*}
I^{4}_{\varepsilon,l}= \int_{\Omega} f_nh_l(u_n)h(u)\phi.
\end{equation*}

\textbf{Step 1}. Passing to the limit as $n\rightarrow \infty $, applying
the convergence results from Lemma $\ref{6.4}$ we get
\begin{equation}\label{eq6.27}
\lim_{n\rightarrow \infty }I_{n,l}^{1}=\int_{\Omega }bh_{l}(u)h(u)\phi
,\quad \lim_{n\rightarrow \infty }I_{n,l}^{4}=\int_{\Omega
}fh_{l}(u)h(u)\phi .
\end{equation}
Let us write
\begin{equation}\label{eq6.28}
I_{n,l}^{2}=I_{n,l}^{2,1}+I_{n,l}^{2,2},
\end{equation}
where
\begin{equation}\label{eq6.29}
I_{n,l}^{2,1}=\int_{\Omega }h_{l}(u_{n})a(x,Du_{n}).D(h(u)\phi ),\quad
I_{n,l}^{2,2}=\int_{\Omega }h_{l}^{\prime }(u_{n})a(x,Du_{n}).Du_{n}h(u)\phi
.
\end{equation}
With similar arguments as in the proof of $(\ref{eq5.25})$ it follows that
\begin{equation}\label{eq6.30}
\lim_{n\rightarrow \infty }I_{n,l}^{2,1}=\int_{\Omega
}h_{l}(u)a(x,Du).D(h(u)\phi ).
\end{equation}
By $(\ref{eq6.17})$, we get the estimate
\begin{equation}\label{eq6.31}
|\lim_{n\rightarrow \infty }I_{n,l}^{2,2}|\leq \Vert h\Vert _{\infty }\Vert
\phi \Vert _{\infty }\big(\delta C_{4}l^{\frac{1}{\overline{p}}%
-1}+\int_{\{|f|>\delta \}}|f|\big),
\end{equation}
for all $n\int \mathbb{N}$ and all $l\geq 1$, $\delta >0$. Next, we write
\begin{equation*}
I_{n,l}^{3}=I_{n,l}^{3,1}+I_{n,l}^{3,2},
\end{equation*}
where
\begin{equation}\label{eq6.32}
\lim_{n\rightarrow \infty }I_{n,l}^{3,1}=\int_{\Omega
}h_{l}(u)F(u).D(h(u)\phi ),\lim_{n\rightarrow \infty
}I_{n,l}^{3,2}=\int_{\Omega }h_{l}^{\prime }(u)F(u).Duh(u)\phi ,
\end{equation}
follows with the same arguments as in $(\ref{eq5.26})-(\ref{eq5.30})$.\newline
\textbf{Step 2}. Passing to the limit as $l\rightarrow \infty $. Combining
(\ref{eq6.26}) with (\ref{eq6.27})-(\ref{eq6.32}) we get for all $\delta >0$ and all $l\geq 1$
\begin{equation}\label{eq6.33}
I_{l}^{1}+I_{l}^{2}+I_{l}^{3}+I_{l}^{4}+I_{l}^{5}=I_{l}^{6},
\end{equation}
where
\begin{equation*}
I_{l}^{1}=\int_{\Omega }bh_{l}(u)h(u)\phi ,\quad I_{l}^{2}=\int_{\Omega
}h_{l}(u)a(x,DT_{l+1}(u)).D(h(u)\phi )
\end{equation*}
\begin{equation*}
|I_{l}^{3}|\leq \Vert h\Vert _{\infty }\Vert \phi \Vert _{\infty }\big(%
\delta C_{4}l^{\frac{1}{\overline{p}}-1}+\int_{\{|f|>\delta \}}|f|\big),
\end{equation*}
for any $\delta >0$ and
\begin{equation*}
I_{l}^{4}=\int_{\Omega }h_{l}^{\prime }(u)F(u)h(u)\phi Du,\text{ }%
I_{l}^{5}=\int_{\Omega }h_{l}(u)F(u).D(h(u)\phi ),\text{ }%
|I_{l}^{6}|=\int_{\Omega }fh_{l}(u)h(u)\phi .
\end{equation*}
Choosing $m>0$ such that $supph\subset  [-m,m]$, we can
replace $u$ by $T_{m}(u)$ in $I_{l}^{1},I_{l}^{2},\dots ,I_{l}^{6}$ hence
\begin{equation}\label{eq6.34}
\lim_{l\rightarrow \infty }I_{l}^{1}=\int_{\Omega }bh(u)\phi
,\lim_{l\rightarrow \infty }I_{l}^{2}=\int_{\Omega }a(x,Du).D(h(u)\phi ),
\end{equation}
\begin{equation}\label{eq6.35}
\lim_{l\rightarrow \infty }|I_{l}^{3}|\leq \Vert h\Vert _{\infty }\Vert \phi
\Vert _{\infty }\int_{\{|f|>\sigma \}}|f|,\lim_{l\rightarrow \infty
}I_{l}^{4}=0,
\end{equation}
\begin{equation}\label{eq6.36}
\lim_{l\rightarrow \infty }|I_{l}^{5}|=\int_{\Omega }F(u).D(h(u)\phi
),\lim_{l\rightarrow \infty }|I_{l}^{6}|=\int_{\Omega }fh(u)\phi ,
\end{equation}
for all $\delta >0$. Combining (\ref{eq6.33}) with (\ref{eq6.34})-(\ref{eq6.36}) we finally deduce
that (6.1) holds for all $h\in C_{C}^{1}(\mathbb{R})$ and all $\phi \in
W_{0}^{1,{\overrightarrow{p}}}(\Omega )\cap L^{\infty }(\Omega )$.\newline
Hence $(u,b)$ satisfies (R1), (R2) and (R3) and the proof of the theorem is
completed.

\subsection{Proof of Theorem \ref{4.2} (Uniqueness)}

\begin{lemma}
For $f,\widetilde{f}\in L^{1}(\Omega )$ let $(u,b)$, $(\widetilde{u},%
\widetilde{b})$ be the renormalized solutions to $(E,f)$ and $(E,\widetilde{f%
})$ respectively, then
\begin{equation}\label{eq6.37}
\int_{\Omega }(b-\widetilde{b})Sign_{0}^{+}(u-\widetilde{u})dx\leq
\int_{\Omega }(f-\widetilde{f})Sign_{0}^{+}(u-\widetilde{u})dx,
\end{equation}
\end{lemma}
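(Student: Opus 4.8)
The plan is to adapt the classical Carrillo--Wittbold technique for comparing renormalized solutions, using the renormalized formulation $(\ref{eq6.25})$ for both $(u,b)$ and $(\widetilde u,\widetilde b)$ and choosing test functions that, in the limit, produce $\mathrm{Sign}_0^+(u-\widetilde u)$. Concretely, I would fix $k>0$ and a renormalization cutoff $l$, and test the equation for $(u,b)$ with $h_l(u)\,H_\sigma^+\!\big(T_k(u)-T_k(\widetilde u)\big)$ and the equation for $(\widetilde u,\widetilde b)$ with $h_l(\widetilde u)\,H_\sigma^+\!\big(T_k(u)-T_k(\widetilde u)\big)$, then subtract. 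The cutoff $h_l$ is needed so that these are admissible test functions in the sense of $(\mathbf R_2)$ (namely in $W_0^{1,\overrightarrow p}(\Omega)\cap L^\infty$), and the truncation $T_k$ localizes the gradient terms where we have the strong convergence $(\ref{eq6.19})$--$(\ref{eq6.21})$ at our disposal.

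The key steps, in order, would be: (1) write the subtracted identity as a sum of the $\beta$-term, the diffusion term involving $a(x,D\cdot)$, the convection term involving $F$, and the right-hand side term; (2) send $\sigma\to 0$ so that $H_\sigma^+(T_k(u)-T_k(\widetilde u))\to \mathrm{Sign}_0^+(T_k(u)-T_k(\widetilde u))$ pointwise and boundedly — here the diffusion term is $\ge 0$ by the monotonicity $(\mathbf H_3)$ on the set $\{T_k(u)=T_k(\widetilde u)\}$ contributing nothing and on $\{T_k(u)>T_k(\widetilde u)\}$ the term $(a(x,Du)-a(x,D\widetilde u))\cdot(Du-D\widetilde u)$ appearing with the factor $\tfrac1\sigma\mathbf 1_{\{0<T_k(u)-T_k(\widetilde u)<\sigma\}}\ge 0$, so it drops out in the liminf in the favourable direction; (3) handle the convection term by a Lipschitz estimate on $F$ together with the strong $L^q$ convergence of truncations and the decay estimate $(\ref{eq6.24})$ (or $(\ref{eq6.17})$) on $\{l<|u|<l+1\}$, showing its contribution vanishes as $\sigma\to 0$ and then $l\to\infty$; (4) pass $l\to\infty$ using $h_l\to 1$, $h_l'\to 0$, and the uniform estimates of Lemma~\ref{6.1} to kill the $h_l'$-terms; (5) finally let $k\to\infty$, noting $b\in L^1(\Omega)$, $\widetilde b\in L^1(\Omega)$ and $|u|,|\widetilde u|<\infty$ a.e., so $\mathrm{Sign}_0^+(T_k(u)-T_k(\widetilde u))\to \mathrm{Sign}_0^+(u-\widetilde u)$ a.e., which by dominated convergence yields $(\ref{eq6.37})$.

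The main obstacle, as usual in these arguments, is controlling the \emph{convection term} $\int F(u)\cdot D\big(h_l(u)H_\sigma^+(T_k(u)-T_k(\widetilde u))\big)$ in the double limit $\sigma\to 0$, $l\to\infty$: one must show that the ``bad'' piece, which lives where $0<T_k(u)-T_k(\widetilde u)<\sigma$ and where the gradients are not small, does not survive. The standard device is to introduce a primitive of $r\mapsto F(r)$ composed with the Lipschitz cutoff and integrate by parts (Gauss--Green for Sobolev functions, exactly as in $(\ref{eq5.27})$), converting the problematic gradient integrand into a bounded integrand against $D(h(u)\varphi)$-type factors, after which dominated convergence applies. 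A secondary technical point is justifying that $h_l(u)H_\sigma^+(T_k(u)-T_k(\widetilde u))$ genuinely lies in $W_0^{1,\overrightarrow p}(\Omega)\cap L^\infty(\Omega)$; this follows from the chain rule for Sobolev functions composed with Lipschitz maps together with $T_k(u),T_k(\widetilde u)\in W_0^{1,\overrightarrow p}(\Omega)$ guaranteed by $(\mathbf R_2)$.
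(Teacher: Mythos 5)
Your proposal follows essentially the same route as the paper: both test the renormalized formulation for $(u,b)$ and $(\widetilde u,\widetilde b)$ with $h=h_l$ and $\phi=H_{\sigma}^{+}$ of a difference of truncations, subtract, drop the nonnegative diffusion term via $(\mathbf{H}_{3})$, control the convection term through the local Lipschitz continuity of $F$ as $\sigma\to 0$, and invoke $(\mathbf{R}_{3})$ to pass $l\to\infty$. The only cosmetic difference is that the paper ties the truncation level to the cutoff (using $T_{l+1}$ together with $h_l$), which makes your extra limit $k\to\infty$ unnecessary.
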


\textit{Proof}. For $\delta >0$ let $H_{\delta }^{+}$ be a Lipschitz
approximation of the $sign_{0}^{+}$ function. Since $(u,b)$, $(\widetilde{u},%
\widetilde{b})$ are renormalized solutions, it follows that 
\begin{center}
$T_{l+1}(u),T_{l+1}(\widetilde{u})\in W_{0}^{1,{\overrightarrow{p}}}(\Omega
)\cap L^{\infty }(\Omega )$ for all $l>0$.
\end{center}

Hence $H_{\delta }^{+}(T_{l+1}(u)-T_{l+1}(\widetilde{u}))\in W_{0}^{1,{%
\overrightarrow{p}}}(\Omega )\cap L^{\infty }(\Omega )$ for $l,\delta >0$.
\newline
Now, we choose $H_{\delta }^{+}(T_{l+1}(u)-T_{l+1}(\widetilde{u}))$ as a
test function in the renormalized formulation with $h=h_{l}$ for $(u,b)$ and
for $(\widetilde{u},\widetilde{b})$ respectively. Subtracting the resulting
equalities, we obtain
\begin{equation}\label{eq6.38}
I_{l,\delta }^{1}+I_{l,\delta }^{2}+I_{l,\delta }^{3}+I_{l,\delta
}^{4}+I_{l,\delta }^{5}=I_{l,\delta }^{6},
\end{equation}
where $K=\{0<T_{l+1}(u)-T_{l+1}<\delta \}$ and
\begin{equation*}
I_{l,\delta }^{1}=\int_{\Omega }(bh_{l}(u)-\widetilde{b}h_{l}(\widetilde{u}%
))H_{\delta }^{+}(T_{l+1}(u)-T_{l+1}(\widetilde{u}))dx,
\end{equation*}
\begin{equation*}
I_{l,\delta }^{2}=\int_{\Omega }(h_{l}^{\prime }(u)a(x,Du).Du-h_{l}^{\prime
}(\widetilde{u})a(x,D\widetilde{u}).D\widetilde{u}).H_{\delta
}^{+}(T_{l+1}(u)-T_{l+1}(\widetilde{u}))dx,
\end{equation*}
\begin{equation*}
I_{l,\delta }^{3}=\frac{1}{\delta }\int_{K}(h_{l}(u)a(x,Du)-h_{l}(\widetilde{%
u})a(x,D\widetilde{u})).D(T_{l+1}(u)-T_{l+1}(\widetilde{u}))dx,
\end{equation*}
\begin{equation*}
I_{l,\delta }^{4}=\int_{\Omega }(h_{l}^{\prime }(u)F(u).Du-h_{l}^{\prime }(%
\widetilde{u})F(\widetilde{u}).D\widetilde{u})H_{\delta
}^{+}(T_{l+1}(u)-T_{l+1}(\widetilde{u}))dx,
\end{equation*}
\begin{equation*}
I_{l,\delta }^{5}=\frac{1}{\delta }\int_{K}(h_{l}(u)F(u)-h_{l}(\widetilde{u}%
)F(\widetilde{u})).D(T_{l+1}(u)-T_{l+1}(\widetilde{u}))dx,
\end{equation*}
\begin{equation*}
I_{l,\delta }^{6}=\int_{\Omega }(fh_{l}(u)-\widetilde{f}h_{l}(\widetilde{u}%
))H_{\delta }^{+}(T_{l+1}(u)-T_{l+1}(\widetilde{u}))dx.
\end{equation*}
Using the same arguments as in \cite{ref:1} i.e., neglecting the nonnegative
part of  $I_{l,\delta }^{3}$ and using that $F$ is locally Lipschitz
continuous, we can pass to the limit as $\delta \rightarrow 0$. \newline
Using the energy dissipation condition $(R_3)$ we can pass the limit as $%
l\rightarrow \infty $ and obtain (\ref{eq6.37}).

Now we are in position to give the proof of Theorem \ref{4.2}:\newline
Assuming $f=\widetilde{f}$, from lemma $6.6$ we get
\begin{equation}\label{eq6.39}
\int_{\Omega }(b-\widetilde{b})sign_{0}^{+}(u-\widetilde{u})dx\leq 0,
\end{equation}
hence $(b-\widetilde{b})sign_{0}^{+}(u-\widetilde{u})=0$ almost everywhere
in $\Omega $. Now, let us write \newline
$\Omega =\Omega _{1}\cup \Omega _{2}$, where $\Omega _{1}:=\{x\in \Omega
:sign_{0}^{+}(u(x)-\widetilde{u}(x))=0\},\Omega _{2}:=\{x\in \Omega :(b(x)-%
\widetilde{b}(x))=0\}$. Since $r\mapsto \beta (x,r)$ is strictly increasing
for a,e. $x\in \Omega $, we can define the function $\beta ^{-1}:\mathbb{%
R}\rightarrow \rightarrow \mathbb{R}$ such that $\beta ^{-1}=s$ for all $%
(r,s)\in \mathbb{R}^{2}$ such that $r\in \beta (x,r)$ for a,e. $x\in \Omega $%
. For a,e. $x\in \Omega _{2}$ we have $b(x)=\widetilde{b}(x)$, hence $%
u(x)=\beta ^{-1}(b(x))=\beta ^{-1}(\widetilde{b}(x))=\widetilde{u}(x)
$. Therefore, $u(x)=\widetilde{u}(x)$
a,e. in $\Omega _{2}$  and $sign_{0}^{+}(u-\widetilde{u})=0$. Interchanging the roles of $u$ and $\widetilde{u}$
and respeating the arguments, we get $sign_{0}^{+}(\widetilde{u}-u)=0$ a,e.
in $\Omega $ and we finally arrive at $u=\widetilde{u}$ a,e. in $\Omega $.
Now, we write the renormalized formulation for $(u,b)$ and $(\widetilde{u},%
\widetilde{b})$ respectively. Substracting the resulting equalities, we
obtain
\begin{equation*}
\int_{\Omega }(b-\widetilde{b})h(u)\varphi dx=0
\end{equation*}
for all $h\in C_{c}^{1}(\mathbb{R})$ and all $\varphi \in C_{c}^{\infty
}(\Omega )$. Choosing $h(u)=h_{l}(u)$ and passing to the limit with $%
l\rightarrow \infty $ we find $b=\widetilde{b}$ a,e. in $\Omega $.

\section{Proof of Proposition \ref{4.3}}

Note that for $\varepsilon ,k>0$, $h_{l}(u)\dfrac{1}{\varepsilon }%
T_{\varepsilon }(u-T_{k}(u))$ as a test function in $(\ref{eq3.2})$. Neglecting
positive terms and passing to the limit with $l\rightarrow \infty $, we
obtain
\begin{equation}\label{eq7.1}
\dfrac{1}{\varepsilon }\sum_{i=1}^{N}\int_{k<|u|<k+\varepsilon
}|Du|^{p_{i}}\leq \lVert f\lVert _{N}(\phi (k))^{(N-1)/N},
\end{equation}
where $\phi (k):=|\{|u|>k\}|$ for $k>0$. Now we use similar arguments as in \cite{ref:1}. We apply the continuous embedding of $W_{0}^{1,1}(\Omega )$
into $L^{N/N-1}(\Omega )$ and the H\"{o}lder inequality to get
\begin{equation}\label{eq7.2}
\dfrac{1}{\varepsilon C_{N}}\lVert T_{\varepsilon }(u-T_{k}(u))\lVert _{%
\frac{N}{N-1}}\leq \Bigg\lgroup\dfrac{\phi (k)-\phi (k+\varepsilon )}{%
\varepsilon }\Bigg\rgroup^{1/(p^{-})^{\prime }}\Bigg \lgroup\dfrac{1}{%
\varepsilon }\int_{k<|u|<k+\varepsilon }|Du|^{p^{-}}\Bigg\rgroup^{1/p^{-}},
\end{equation}
where $C_{N}>0$ is the constant coming from the Sobolev embedding. \newline
Notice that
\begin{equation}\label{eq7.3}
\dfrac{1}{\varepsilon }\sum_{i=1}^{N}\int_{k<|u|<k+\varepsilon
}|Du|^{p^{-}}\leq \dfrac{\phi (k)-\phi (k+\varepsilon )}{\varepsilon }+%
\dfrac{1}{\varepsilon }\sum_{i=1}^{N}\int_{k<|u|<k+\varepsilon }|Du|^{p_{i}},
\end{equation}
hence, from $(\ref{eq7.1}),(\ref{eq7.2})$ and $(\ref{eq7.3})$ we deduce that\newline
\begin{equation}\label{eq7.4}
\dfrac{1}{\varepsilon C_{N}}\lVert T_{\varepsilon }(u-T_{k}(u))\lVert _{%
\frac{N}{N-1}}\leq \Bigg\lgroup\dfrac{\phi (k)-\phi (k+\varepsilon )}{%
\varepsilon }\Bigg \rgroup^{1/(p^{-})^{\prime }}\Bigg\lgroup\dfrac{\phi
(k)-\phi (k+\varepsilon )}{\varepsilon }+\lVert f\lVert _{N}(\phi
(k))^{(N-1)/N}\Bigg\rgroup^{1/(p^{-})}.
\end{equation}

From $(\ref{eq7.4})$ and Young's inequality with $\alpha >0$ it follows that
\begin{equation}\label{eq7.5}
\dfrac{1}{C_{N}C}(\phi (k+\varepsilon ))^{(N-1)/N}-\dfrac{\alpha ^{p^{-}}}{%
p^{-}C}\lVert f\lVert _{N}(\phi (k))^{(N-1)/N}-\dfrac{\phi (k)-\phi
(k+\varepsilon )}{\varepsilon }\leq 0,
\end{equation}
where
\begin{equation*}
C:=\Bigg(\dfrac{1}{\alpha ^{(p^{-})^{\prime }}(p^{-})^{\prime }}+\dfrac{%
\alpha ^{p^{-}}}{p^{-}}\Bigg )>0.
\end{equation*}
The mapping $(0,\infty )\ni k\rightarrow \phi (k)$ is non-increasing and
therefore of bounded variation, hence it is differentiable almost everywhere
on $(0,\infty )$ with $\phi ^{\prime }\in L_{loc}^{1}(0,\infty )$. Since it
is also continuous from the right, we can pass to the limit with $%
\varepsilon \downarrow 0$ in $(\ref{eq7.5})$ to find
\begin{equation}\label{eq7.6}
C^{\prime \prime }(\phi (k))^{(N-1)/N}+\phi ^{\prime }(k)\leq 0
\end{equation}
for almost every $k>0$ and $\alpha >0$ choosen small enough such that
\begin{equation*}
C^{\prime \prime }:=\Bigg(\dfrac{C_{N}}{C}-\dfrac{\alpha ^{p^{-}}}{p^{-}C}%
\lVert f\lVert _{N}\Bigg)>0.
\end{equation*}
Now, the conclusion of the proof follows by contradiction. We assume that $%
\phi (k)>0$ for each $k>0$. For $k>0$ fixed, we choose $k_{0}<k.$ From $(\ref{eq7.6})
$ it follows that
\begin{equation}\label{eq7.7}
\dfrac{1}{N}C^{\prime \prime }+\dfrac{d}{ds}((\phi (s))^{(1/N)})\leq 0
\end{equation}
for almost all $s\in (k_{0},k)$. The left hand side of $(\ref{eq7.7})$ is in $%
L^{1}(k_{0},k)$, hence we integrate $(\ref{eq7.7})$ over $[k_{0},k]$. Moreover,
since $\phi $ is non-increasing, integrating $(\ref{eq7.7})$ over $(k_{0},k)$ we get
\begin{equation}\label{eq7.8}
(\phi (k))^{1/N}\leq \phi (k_{0})^{1/N}+\dfrac{1}{N}C^{\prime \prime
}(k_{0}-k)
\end{equation}
and from $(\ref{eq7.8})$ the contradiction follows.

\section{Example}

This section is devoted to an example for illustrating our abstract result.\\
Let us consider the special case:\\
$$\beta(r)=(r-1)^+ - (r-1)^-,\hspace*{4mm}  F:\mathbb{R} \rightarrow (F_i)_{i=1,...,N} \in \mathbb{R}^N,$$
where $F$ is locally lipshitz continuous function, and 
$$ a_i(x,\xi)= \sum_{i=1}^{N}|\xi_i|^{p_i-1} sgn(\xi_i),\hspace*{4mm} i=1,...,N,$$
the $a_i(x,\xi)$ are Carath\'{e}dory function satisfying the growth condition $(\mathbf{H}_{2})$, and the coercivity $(\mathbf{H}_{1})$. On the other the monotonicity condition is verified. In fact

$$\sum_{i=1}^{N}\Big \lgroup a_i(x,\xi) - a_i(x, \tilde{\xi})\Big \rgroup(\xi_i- \tilde{\xi_i}) = \sum_{i=1}^{N}\Big\lgroup|\xi_i|^{p_i-1}sgn(\xi_i)- |\tilde{\xi_i}|^{p_i-1}sgn(\tilde{\xi}_i)\Big \rgroup(\xi_i - \tilde{\xi_i})\geq 0,$$
for almost all $x\in \Omega$ and for all $\xi, \tilde{\xi} \in \mathbb{R}^N$. This last inequality can not be strict, since for $\xi \neq \tilde{\xi}$ with $\xi_N \neq \tilde{\xi}_N$ and $\xi=\tilde{\xi}, i=1,...,N-1$. The corresponding expression is zero.

Therefore, for all $f\in L^(\Omega),$ the following problem:

$$
   \left\{
\begin{array}{l@{~}l@{~}l@{~}l}
T_k(u)\in W_{0}^{1,{\overrightarrow{p}}}(\Omega )\hspace*{3mm} \text{for} \hspace*{3mm}(k>0); b\in L^1(\Omega)\hspace*{3mm} \text{and}\hspace*{3mm} b(x)\in \beta(u(x)),\\

\lim_{l\rightarrow \infty}\int_{\{l<|u|<l+1\}}a(x,Du).Dudx = 0,\\

\int_\Omega bh(u)\varphi dx + \int_\Omega h(u)\sum_{i=1}^{N}\Big\lvert\dfrac{\partial u}{\partial x_i} \Big\rvert^{p_i-1} sgn \Big(\dfrac{\partial u}{\partial x_i}\Big).\dfrac{\partial \varphi}{\partial x_i}dx\\

+ \int_\Omega h'(u)\sum_{i=1}^{N}\Big\lvert\dfrac{\partial u}{\partial x_i} \Big\rvert^{p_i-1} sgn \Big(\dfrac{\partial u}{\partial x_i}\Big).\dfrac{\partial\varphi}{\partial x_i}dx + \int_\Omega F(u).D(h(u)\varphi) dx\\

= \int_\Omega f.D(h(u)\phi), \hspace*{3mm} \forall \varphi \in  W_{0}^{1,{\overrightarrow{p}}}(\Omega )\cap L^\infty(\Omega) \hspace*{3mm}and \hspace*{3mm} h\in C_c^1(\mathbb{R}),

\end{array}
 \right.$$

at least one renormalized solution.

 {\small

}


\begin{thebibliography}{99}
\bibitem{ref:14}   L. Aharouch, E. Azroul and A. Benkirane,
Quasilinear degenerated equations with $L^{1}$ datum and without coercivity
in perturbation terms, Electron. J. Qualit. Theory Differential Equations
(2006), 19.18.

\bibitem{ref:13}  L. Aharouch, A. Benkirane, J. Bennouna and A.
Touzani, Existence and uniqueness of solutions of somme nonlinear equations
in Orlicz spaces and weithed Sobolev spaces, in: Recent Developement in
Nonlinear Analysis, World Sci (2010), 170-180.

 \bibitem{ref:5}  Y. Akdim, C. Allalou: Existence of renormalized
solutions of nonlinear elliptic problems in weighted variable-exponent
space. J. Math. Study. Vol(48)4 (2015), 375-397.

 \bibitem{ref:3}  F. Andreu, N. Igbida, J. M. Maz\'{o}n and J.
Toledo, $L^{1}$ existence and uniqueness results for quasi-linear elliptic
equations with nonlinear boundary conditions. Ann. Inst. H. Poincar\'{e}.
Anal. Non Lin\'{e}aire vol(24)(2007), 61-89.

\bibitem{ref:17} S. Antontsev, S. Shmarev,   Elliptic equations and systems with nonstandard
growth conditions: existence, uniqueness and localization properties of solutions.
 Anal.Nonlinear vol (65) (2006), 728-761.
 
 
 \bibitem{ref:23} S. N. Antontsev, J. I. D\'{\i}az,  S. Shmarev,  Energy Methods for Free Boundary Problems. Applications to Nonlinear PDEs and Fluid Mechanics. In: Progress
 in Nonlinear Differential Equations and Their Applications. vol. 48, (2002).
 
 \bibitem{ref:24}S. Antontsev,  S. Shmarev,  Elliptic equations and systems with nonstandard
 growth conditions: existence, uniqueness and localization properties of solutions.
 Nonlinear Anal. vol(65) (2006), 728-761.
 
 \bibitem{ref:18} M. Bendahmane, K. H. Karlsen,   Anisotropic nonlinear elliptic systems with
  measure data and anisotropic harmonic maps into spheres. Electron. J. Differential
  Equations, no. vol (46) (2006).
  
   \bibitem{ref:22}M. Bendahmane,  K. H.  Karlsen,  Renormalized solutions of an anisotropic
  reaction-diffusion-advection system with L1 data. Commun. Pure Appl. Anal.
  vol(5) (2006), 733-762.
  
  \bibitem{ref:7}  M. Bendahmane and P. Wittbold. Renormalized
  solutions for nonlinear elliptic equations with variable exponents and $L^{1}
  $-data. Nonlinear Anal. TMA. vol(70)2 (2009), 567-583.
  
  \bibitem{ref:11}   A. Benkirane and J. Bennouna, Existence of
  solutions for nonlinear elliptic degenerate equations, Nonlinear Anal.
  vol(54)(2003), 9-37.
  
  \bibitem{ref:4}   P. B\'{e}nilan, L. Boccardo, T. Gallou\'{e}t, R.
  Gariepy, M. Pierre, and J.L.V\'{a}zquez: An $L^{1}$- theory of existence and
  uniqueness of solutions of nonlinear equations, Ann. Scuola Norm. Sup. Pisa,
  Cl. Sci. vol(22)2 (1995), 241-273.
  
 \bibitem{ref:19}L. Boccardo,  T. Gallouet,  P. Marcellini,   Anisotropic equations in ¨ L1. Diff.
 Int. Eqs. vol(9) (1996), 209-212.
 
 \bibitem{ref:27} L. Boccardo, J.I. Diaz, D. Giachetti, and F. Murat, Existence of a solution for a weaker form of a nonlinear elliptic equation, Recent advances
 in nonlinear elliptic and parabolic problems (Nancy 1988), Pitman Res.
 Notes Mat. Ser., vol(208), Longman Sci. Tech., Harlow, (1989), 229-246.
 
 \bibitem{ref:28} D. Blanchard, F. Murat, and H. Redwane, Existence and Uniqueness of a
 Renormalized Solution for a Fairly General Class of Nonlinear Parabolic
 Problems, J. Diff. Equ. vol(177) (2001), 331-347.
 
 
 \bibitem{ref:8}  H. Br\'{e}zis. Op\'{e}rateurs Maximaux
 Monotones. North-Holland, Amsterdam, (1973).
 
 \bibitem{ref:20} A. Cianchi,  Symmetrization in anisotropic elliptic problems. Comm. Part.
 Diff. Eqs. vol(32) (2007), 693-717.
 
  \bibitem{ref:31}Andrea Cianchi, A Fully anisotropic Sobolev inequality, Pacific Journal of Mathematic, No.2, Vol(196) (2000),  283-294.
  
   \bibitem{ref:25}A. Damlamian, Some result on the multiphase Stefan problem, Comm. Partial Diff. Equations vol(2) (1977), 1017-1044.
 
  \bibitem{ref:33}G. Dal Maso, F. Murat, L. Orsina, and A. Prignet, Renormalized solutions of elliptic
 equations with general measure data, Ann. Scuola Norm. Sup. Pisa Cl. Sci. (4), vol(28)
 (1999), 741-808.
 
 \bibitem{D1}  R.J. DiPerna and P-L, Lions, On the Cauchy
 problem for Boltzmann equations: global existence and weak stability, Ann.
 of Math. vol(130)2 (1989), 321-366.
 
 \bibitem{ref:21}  A. El Hamidi, J. M. Rakotoson,  Extremal functions for the anisotropic
 Sobolev inequalities. Ann. Inst. H. Poincare Anal. Non Lin\'{e}aire vol(24) (2007), 741–756.
 
 \bibitem{ref:16}   I. Fragal\`{a}, F. Gazzola, B. Kawohl,
 Existence and nonexistence results for anisotropic quasilinear elliptic
 equations, Ann. Inst. H. Poincar\'{e} Anal. Non Lin\'{e}aire vol(21)5
 (2004), 715-734.
 
 \bibitem{ref:26} A. Friedman, Variational Principles and Free-boundary Problems, Wiley, New York, 1982.
 
  \bibitem{ref:2} P. Gwiazda, P. Wittbold, A, Wr\'{o}blewska and A.
 Zimmermann, Renormalized solutions of nonlinear elliptic problems in
 generalized Orlicz spaces, J.Differential Equations vol(253)(2012), 635-666.
 
 \bibitem{ref:9}  H. Hjiaj, Stanislas Ouaro, M. Badr Benboubker:
 Entropy solutions to nonlinear anisotropic problem with variable exponent.
 Journal of Applied Analysis and Computation Vol(4)3 (2014).
 
 \bibitem{ref:30} A.G. Korolev, Embedding theorems for anisotropic Sobolev-Orlicz spaces, Vestnik
 Moskovskogo Universiteta, Matematika, vol(38) (1983), 32-37 .
 
 \bibitem{ref:15}   J. L. Lions, Quelques m\'{e}thodes de r\'{e}%
 solution des probl\`{e}mes aux limites non lin\'{e}aires, Dunod, Paris,
 (1969).
 
 \bibitem{ref:29}S. M. Nikolskii, An imbedding theorem for functions with partial derivatives considered in different metrics, Izv. Akad. Nauk SSSR Ser. Mat., 22 (1958), 321-336.
 
 \bibitem{ref:32} F. Murat, Soluciones renormalizadas de EDP elipticas non lineales, Tech. Report
 R93023, Laboratoire d’Analyse Num\'{e}rique, Paris VI, 1993, Cours \`{a} l’Universit\'{e} de
 S\'{e}ville.
 
 
 \bibitem{ref:17}   M. R\.{u}zi\u{c}ka, Electrorheological Fluids:
 Modelling and Mathematical Theory, Springer, Berlin, (2000).
 
  \bibitem{ref:12}   M. Troisi, Teoremi di inclusione per spazi di
 Sobolev non isotropi, Ricerche Mat. vol(18)(1969), 3-24.

 \bibitem{ref:1}  P. Wittbold and Zimmermann, Existence and
uniqueness of renormalized solutions to nonlinear elliptic equations with
variable exponents and $L^{1}-data$, Nonlinear Anal. vol(72)(2010),
2990-3008.

\end{thebibliography}
\end{document}